\setlist{topsep=1pt,parsep=1pt,itemsep=1pt} 
\numberwithin{equation}{section}
\title{Stochastic resonance in stochastic PDEs}
\author{Nils Berglund and Rita Nader}
\date{}
\begin{document}

\maketitle

\begin{abstract}
We consider stochastic partial differential equations (SPDEs) on the 
one-dimensional torus, driven by space-time white noise, and with a 
time-periodic drift term, which vanishes on two stable and one unstable 
equilibrium branches. Each of the stable branches approaches the unstable one 
once per period. We prove that there exists a critical noise intensity, 
depending on the forcing period and on the minimal distance between equilibrium 
branches, such that the probability that solutions of the SPDE make transitions 
between stable equilibria is exponentially small for subcritical noise 
intensity, while they happen with probability exponentially close to $1$ for 
supercritical noise intensity. Concentration estimates of solutions are given 
in the $H^s$ Sobolev norm for any $s<\frac12$. The results generalise to an 
infinite-dimensional setting those obtained for $1$-dimensional SDEs 
in~\cite{BG_SR}. 
\end{abstract}

\leftline{\small{\it Date.\/} July 15, 2021. 
Updated August 9, 2021.
}
\leftline{\small 2020 {\it Mathematical Subject Classification.\/} 
60H15, 		
60G17		
(primary),
34F15,   	
37H20   	
(secondary)}
\noindent{\small{\it Keywords and phrases.\/}
Stochastic PDEs, 
stochastic resonance, 
sample-path estimates,
slow-fast systems, 
transcritical bifurcation. 
}


\section{Introduction}
\label{sec:intro} 

Stochastic resonance can occur when a bistable or multistable dynamical system 
is forced periodically in time, while also subjected to noise. When the forcing 
period is close to the typical time needed by the noise to move the system from 
one metastable state to another one, large-amplitude, nearly periodic 
oscillations may occur. Even if this resonance condition is not exactly met, 
the response of the system shows a trace of the periodic forcing in its power 
spectrum.

The mechanism of stochastic resonance was initially introduced in the context of 
climate science~\cite{Nicolis,BSV}, to propose an explanation for the relation 
between Milankovitch cycles and glacial periods. Since then, stochastic 
resonance has shown up in several other applications to ecology and climate 
science, see for instance~\cite{Velez01,Eisenman_Wettlaufer_08,ATW_21}. It also 
appears in many other applications, including 
neuroscience~\cite{MuratovVanden-EijndenE} and quantum electronics~\cite{WM}. We 
refer to~\cite{WJ,GHM,HanggiReview02} for comprehensive reviews on this topic.

The most precise mathematical results on stochastic resonance have been 
obtained for one-dimensional stochastic differential equations (SDEs) of the 
form 
\begin{equation}
\label{eq:SDE} 
 \6X_t = f(\eps t,X_t) \6t + \sigma \6W_t\;,
\end{equation} 
where $W_t$ is a standard Wiener process, and $f$ is a time-periodic bistable 
drift term. A standard example is 
\begin{equation}
\label{eq:f_AC} 
 f(\eps t,x) = x - x^3 + A\cos(\eps t)
 = - \frac{\partial}{\partial x} \biggpar{\frac14 x^4 - \frac12 x^2 - 
A\cos(\eps t) x}\;.
\end{equation} 
Whenever $A$ is smaller than a critical value given by $\Acrit = 
\frac{2}{3\sqrt{3}}$, the drift term vanishes in three different values of $x$, 
which correspond to equilibrium states of the system with a frozen value of 
$\eps t$. These states are also critical points of the double-well potential 
$V(x,\eps t) = \frac14 x^4 - \frac12 x^2 - A\cos(\eps t) x$, where the middle 
point is the unstable saddle, and the two outer points are stable potential 
minima. 

The first investigations of stochastic resonance in systems of the 
form~\eqref{eq:SDE} focused on the case of small amplitude 
$A$~\cite{Fox,GMSMP,JH2}, but many other parameter regimes have been considered 
as well (see~\cite{HIPP_book} for an overview of mathematical results). Here, 
we will be mainly interested in the case where $A$ is slightly smaller than 
$\Acrit$, which was analysed in the one-dimensional setting in the 
work~\cite{BG_SR}. In that situation, one can prove that there exists a critical 
noise intensity $\sigmac$ such that when $\sigma \ll \sigmac$, transitions 
between potential minima are very rare, while for $\sigma \gg \sigmac$, it is 
very likely that the system goes back and forth between the local minima twice 
per period.

The present work is concerned with a generalisation of~\eqref{eq:SDE} to the 
infinite-dimensional setting. We will consider stochastic partial differential 
equations (SPDEs) of the form 
\begin{equation}
\label{eq:SPDE_intro} 
\6\phi(t,x) 
= \bigbrak{\Delta\phi(t,x) + f(\eps t,\phi(t,x))} \6t 
+ \sigma\6W(t,x)\;,
\end{equation} 
where $x$ belongs to the one-dimensional torus $\T = \R/L\Z$, and
$W(t,x)$ denotes space-time white noise given by a cylindrical Wiener 
process. The drift term $f$ is again assumed to describe a bistable situation. 
For instance, the choice~\eqref{eq:f_AC} corresponds to a periodically forced 
Allen--Cahn equation. Our results apply, however, to more general drift terms 
$f$, that only need to satisfy a number of regularity and growth conditions. 

The analysis requires an extension to the infinite-dimensional situation of 
SPDEs of sample-path methods introduced in~\cite{BG_pitchfork,BG_SR} for the 
one-dimensional setting, and extended 
in~\cite{berglund2002GeoPerSDE,Berglund_Gentz_book} to arbitrary finite 
dimensions. A first step towards extending those methods to  
infinite dimensions has been taken in~\cite{gnann_kuehn_pein_2019}. 
However, that work considers noise that is coloured in space and white in time, 
given by a $Q$-Wiener process with trace class covariance, while we consider 
here the more difficult situation of space-time white noise. 

Our main results can be summarised as follows. As above, we assume that the 
time-periodic drift term $f$ vanishes on three branches, two of which come close 
to each other or meet once per period. The minimal distance between the branches 
at these close encounters is measured by a small parameter $\delta$, which 
corresponds to $\Acrit - A$ in the particular case where $f$ is given 
by~\eqref{eq:f_AC}. We then have the following results.

\begin{itemize}
\item 	Theorem~\ref{thm:stable} states that as long as the equilibrium branches 
are well-separated, solutions of the SPDE~\eqref{eq:SPDE_intro} are likely to 
remain close to deterministic solutions tracking the stable branches. Closeness 
is measured in the $H^s$ Sobolev norm, where $s$ is strictly smaller than 
$\frac12$, but can be arbitrarily close to $\frac12$.  

\item 	When equilibrium branches become close to each other, we decompose the 
solution $\phi(t,x)$ into its spatial mean $\phi_0(t)$, and its zero-mean 
transverse part $\phi_\perp(t,x)$. Theorem~\ref{thm:phiperp} says that the 
conclusion of Theorem~\ref{thm:stable} remains valid at bifurcation points for 
the transverse part. 

\item 	The behaviour of the spatial mean $\phi_0(t)$ depends on the value of 
the noise intensity $\sigma$. Theorem~\ref{thm:phi0_stable} implies that in the 
weak-noise regime $\sigma \ll \sigmac = (\delta\vee\eps)^{3/4}$, sample paths 
are still likely to remain close to the same stable equilibrium. The probability 
of making a transition to the other stable equilibrium is exponentially small 
in $\sigmac^2/\sigma^2$. 

\item 	In the strong-noise regime $\sigma \geqs \sigmac = 
(\delta\vee\eps)^{3/4}$, transitions between equilibrium branches become more 
likely. Theorem~\ref{thm:phi0_strong} implies that the probability not to make 
a transition to the other stable equilibrium when approaching an avoided 
bifurcation point decays roughly like 
$\exp[-\sigma^{4/3}/(\eps\log(\sigma^{-1}))]$. 
\end{itemize}

Our results thus show that similarly to the one-dimensional situation 
considered in~\cite{BG_SR}, depending on the noise intensity, transitions 
between stable equilibria are either exponentially rare, or happen with a 
probability exponentially close to $1$. There are some differences in the error 
terms, which are due to the fact that we have to deal with the transverse part 
$\phi_\perp$ of the solution. 

The main difficulty of the analysis comes from the fact that we work with 
space-time white noise in an infinite-dimensional situation. This prevents us 
from applying directly the methods from~\cite{berglund2002GeoPerSDE}, which 
work in finite dimension, and include dimension-dependent error terms. These 
error estimates can be adapted to trace class noise, as was done 
in~\cite{gnann_kuehn_pein_2019}, but the white noise case needs a different 
approach, relying on more careful estimates in various Sobolev norms. Key 
results are an estimate for a linearised equation based on the Fourier 
decomposition, presented in Section~\ref{ssec:stable_stochastic}, and a 
Schauder estimate given in Lemma~\ref{lemma3.5}. 

The remainder of this paper is organised as follows. In 
Section~\ref{sec:results}, we give the precise assumptions on the SPDEs we 
consider, and all main results, as well as a discussion of the different 
parameter regimes. Section~\ref{sec:proof_stable} contains the proofs for the 
stable case, that is, as long as the system does not approach any bifurcation 
points, while Section~\ref{sec:proof_bif} contains the proofs for the cases 
with (avoided) bifurcations. Appendix~\ref{annexes} recalls several 
inequalities involving products in Sobolev spaces that are used in the 
analysis. 

\subsection*{Notations}

The system studied in this work depends on three small parameters $\eps$,  
$\sigma$ and $\delta$. We write $X \lesssim Y$ to indicate that $X \leqs c Y$ 
for a constant $c$ independent of $\eps$,  $\sigma$ and $\delta$, as long as 
these parameters are small enough. The notation $X\asymp Y$ indicates that one 
has both $X \lesssim Y$ and $Y\lesssim X$, while Landau's notation $X = 
\Order{Y}$ means that $\abs{X} \lesssim Y$. If $a, b\in\R$, $a\wedge b$ denotes 
the minimum of $a$ and $b$, and $a\vee b$ denotes the maximum of $a$ and $b$. 
Finally, we write $1_{\cD}(x)$ for the indicator function of a set or event 
$\cD$. 

\subsection*{Acknowledgments}

This work is supported by the ANR project PERISTOCH, ANR–19–CE40–0023. The 
authors thank G\'erard Bourdaud for drawing their attention to the 
reference~\cite{Bourdaud_calcul_symbolique}.


\section{Main results}
\label{sec:results} 


\subsection{The set-up}
\label{ssec:setup} 

Let $L, T>0$ be real parameters. We will consider time-dependent SPDEs on the 
torus $\T = \R/L\Z$ of the form 
\begin{equation}
\label{eq:SPDE_fast}
\6\phi(t,x) 
= \bigbrak{\Delta\phi(t,x) + f(\eps t,\phi(t,x))} \6t 
+ \sigma\6W(t,x)\;,
\end{equation} 
for the unknown $\phi: I\times\T \to \R$, where $I = [0,T]$. Here

\begin{itemize}
 \item 	$\eps>0$ is a small parameter quantifying the slow time dependence;
 \item 	$\sigma>0$ is a small parameter measuring the noise intensity;
 \item 	$f: [0,T]\times\R \to \R$ is a forcing term satisfying a number of 
assumptions given below;
 \item $\6W(t,x)$ denotes space-time white noise on $\R_+\times\T$.
\end{itemize}
Our results extend naturally to the case where $f : \R\times\R \to \R$ is 
periodic in the time variable, with period $T$. 

It will be more convenient to work with slow time $\eps t$. Scaling time by a 
factor $\eps$ yields the equation 
\begin{equation}
\label{eq:SPDE}
\6\phi(t,x) 
= \frac{1}{\eps} \bigbrak{\Delta\phi(t,x) + f(t,\phi(t,x))} \6t 
+ \frac{\sigma}{\sqrt{\eps}} \6W(t,x)\;.
\end{equation} 
It will sometimes be useful to work with a potential $U$ associated with $f$, 
satisfying 
\begin{equation}
\label{eq:def_U} 
 f(t,\phi) = -\partial_\phi U(t,\phi)\;.
\end{equation} 
The following assumption on the behaviour of $U$ for large values of $\phi$ 
will be assumed to hold throughout this work.

\begin{assumption}[Global behaviour of the drift term]
\label{assum:U_large_phi}
The potential $U$ admits, for all $(t,\phi) \in I\times \R$, a decomposition 
\begin{equation}
 U(t,\phi) = P(t,\phi) + g(t,\phi)
\end{equation} 
into a polynomial part and a bounded part. More precisely, 
\begin{itemize}
\item 	there exists an integer $p_0 \geqs 1$ such that the map $\phi\mapsto 
P(t,\phi)$ is a polynomial of degree $2p_0$, of the form 
\begin{equation}
 P(t,\phi) = \sum_{j=0}^{2p_0} A_j(t)\phi^j
\end{equation} 
with coefficients $A_j\in\cC^1(I,\R)$ such that $\abs{A_j(t)}$ and 
$\abs{A_j'(t)}$ are bounded uniformly, and $A_{2p_0}(t) > 0$ for all $t\in I$; 

\item 	the function $g\in\cC^2(I\times\R,\R)$ satisfies 
\begin{equation}
 \abs{g(t,\phi)\phi^{-1}}\;,\;
 \abs{\partial_\phi g(t,\phi)}\;,\;
 \abs{\partial_{\phi\phi} g(t,\phi)}\;,\;
 \abs{\partial_t g(t,\phi)} \leqs M 
\end{equation} 
for all $(t,\phi)\in I\times\R$ and some constant $M>0$. 
\end{itemize}
\end{assumption}


\subsection{The stable case}
\label{ssec:results_stable} 

We start by considering the case where the drift term $f$ admits a stable 
equilibrium branch, in the following sense. 

\begin{assumption}[Stable case]
\label{assum:a(t)}
There exists a map $\phi^*: I\to\R$ such that 
\begin{equation}
 f(t,\phi^*(t)) = 0 \qquad \forall t\in I\;.
\end{equation} 
Furthermore, the linearisation $a(t) = \partial_\phi f(t,\phi^*(t))$ satisfies 
\begin{equation}
 -a_+ \leqs a(t) \leqs -a_- 
 \qquad \forall t\in I
\end{equation} 
for some constant $a_\pm > 0$. 
\end{assumption}

Consider first the deterministic equation 
\begin{equation}
\label{eq:SPDE_det}
\6\phi(t,x) 
= \frac{1}{\eps} \bigbrak{\Delta\phi(t,x) + f(t,\phi(t,x))} \6t\;.
\end{equation} 
It will be convenient to work with the orthonormal Fourier basis 
$\set{e_k}_{k\in\Z}$ of $L^2(\T,\R)$, given by 
\begin{equation}
 e_k(x) = 
\begin{cases}
\displaystyle
 \sqrt{\frac{2}{L}} \cos \biggpar{\frac{k\pi x}{L}} 
 & \text{if $k > 0$\;,}\\[10pt]
\displaystyle
 \frac{1}{\sqrt{L}}  & \text{if $k = 0$\;,}\\[10pt]
\displaystyle
 \sqrt{\frac{2}{L}} \sin \biggpar{\frac{k\pi x}{L}} & \text{if $k < 0$\;.}
\end{cases}
\label{eq:Fourier} 
\end{equation} 
Given a real number $s>0$ and a function $\phi\in L^2(\T)$ with Fourier 
expansion 
\begin{equation}
 \phi(x) = \sum_{k\in\Z} \phi_k e_k(x)\;,
\end{equation} 
we define the fractional Sobolev norm of $\phi$ by 
\begin{equation}
\label{eq:def_Sobolev_norm} 
 \norm{\phi}_{H^s}^2 = \sum_{k\in\Z} \jbrack{k}^{2s} \phi_k^2\;,
\end{equation} 
where we use the \lq\lq Japanese bracket\rq\rq\ notation 
$\jbrack{k} = (1+k^2)^{1/2}$. We denote by $H^s = H^s(\T,\R)$ the 
fractional Sobolev space (or Bessel potential space) of functions 
$\phi:\T\to\R$ admitting a finite $H^s$-norm. 
We then have the following result, which generalises to our 
infinite-dimensional setting results from singular perturbation theory that are 
well-known in finite dimension (see in particular~\cite{Tihonov,Fenichel}). 

\begin{prop}[Deterministic dynamics in the stable case]
\label{det_prop}
There exist constants $C, \eps_0>0$ such that for $0 < \eps < \eps_0$, the 
equation~\eqref{eq:SPDE_det} admits a particular solution $\bar\phi(t,x)$ 
satisfying 
\begin{equation}
 \norm{\bar\phi(t,\cdot) - \phi^*(t) e_0}_{H^1} 
 \leqs C\eps
 \qquad \forall t\in I\;.
\end{equation} 
\end{prop}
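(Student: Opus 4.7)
The proof is a singular-perturbation / slow-manifold construction adapted to the parabolic PDE. The key structural fact to exploit is that the linearisation of~\eqref{eq:SPDE_det} at the spatially constant state $\phi^*(t)$ is the self-adjoint operator $L(t)=\Delta + a(t)$, whose Fourier spectrum $\bigset{-\pi^2 k^2/L^2 + a(t)}_{k\in\Z}$ is uniformly bounded above by $-a_- < 0$ by Assumption~\ref{assum:a(t)}. Thus $L(t)/\eps$ generates a strongly contractive semigroup on every $H^s$, and this is the mechanism that damps the fast modes and pins $\bar\phi$ to an $\Order{\eps}$ neighbourhood of $\phi^*$.

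First, I would build a spatially constant first-order quasi-solution. Take the ansatz $\bar\phi_1(t,x) := \phi^*(t) + \eps\,\psi_1(t)$, substitute into~\eqref{eq:SPDE_det} and Taylor expand $f$ around $\phi^*(t)$; matching the $\Order{\eps}$ terms forces $a(t)\psi_1(t) = \dot\phi^*(t)$, which is solvable because $\abs{a(t)}\geqs a_-$ and yields $\psi_1 \in \cC^1(I,\R)$. A direct computation then shows that $\bar\phi_1$ satisfies~\eqref{eq:SPDE_det} up to a spatially constant residual of order $\eps^2$, and trivially that $\|\bar\phi_1(t)-\phi^*(t)e_0\|_{H^1} = \Order{\eps}$.

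Next I would seek the true solution in the form $\bar\phi = \bar\phi_1 + \eps^2 \chi$. Substituting and using that $\bar\phi_1$ is an $\eps^2$-quasi-solution, the correction $\chi$ satisfies
\[
\eps\,\partial_t \chi \;=\; L(t)\chi + R(t,\chi),
\]
where $R(t,\chi) = -\dot\psi_1(t) + \eps\,\Phi(t,\eps^2\chi)$ and $\Phi$ gathers the higher Taylor terms of $f$, polynomial in $\chi$. Fix the initial datum $\chi(0)=0$ and run an $H^1$ energy estimate: the Fourier representation~\eqref{eq:def_Sobolev_norm} immediately gives $\pscal{L(t)\chi}{\chi}_{H^1} \leqs -a_-\|\chi\|_{H^1}^2$, while the polynomial remainder is controlled by the one-dimensional Sobolev embedding $H^1(\T)\hookrightarrow L^\infty(\T)$ together with the growth conditions in Assumption~\ref{assum:U_large_phi}. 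A Gronwall argument then yields $\|\chi(t)\|_{H^1} \leqs C$ uniformly on $I$, so that $\|\bar\phi(t)-\phi^*(t)e_0\|_{H^1} \leqs \eps\|\psi_1\|_{H^1} + \eps^2\|\chi\|_{H^1} = \Order{\eps}$.

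The main obstacle will be closing the bootstrap: $R$ depends polynomially on $\chi$ with an $\eps$-independent leading coefficient, so one must restrict a priori to a ball $\{\|\chi\|_{H^1}\leqs K\}$, verify that the linear damping $-a_-\|\chi\|_{H^1}^2$ dominates the polynomial correction there once $\eps$ is small enough (with $K$ allowed to depend on $T$ and on the coefficients of $P$), and then use a standard continuation argument to propagate the bound over the full interval $I$. Everything else reduces to routine Duhamel / energy computations once the spectral bound on $L(t)$ is in hand; the finiteness of $I$ enters only through the constants accumulated in the Gronwall step.
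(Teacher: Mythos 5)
Your proposal is sound in substance but takes a genuinely different route from the paper. You build an explicit first-order corrector $\psi_1=\dot\phi^*/a$ and then control the second-order remainder $\chi$ through an $H^1$ energy estimate resting on the uniform spectral gap of $\Delta+a(t)$, the embedding $H^1(\T)\hookrightarrow L^\infty(\T)$, and a continuation (bootstrap) argument. The paper works one order lower: it sets $\psi=\phi-\phi^*e_0$ directly, takes the Lyapunov function $V=\frac12\norm{\psi}_{H^1}^2$, and shows $\eps\dot V\leqs -C_1V+C_2V^{3/2}+\eps C_3V^{1/2}$, where the term $\eps C_3V^{1/2}$ carries the drift $\eps\,\frac{\6}{\6t}\phi^*$; substituting $V=Z^2$ and applying Gronwall on an exit-time interval gives $Z=\Order{\eps}$, hence the claimed $\Order{\eps}$ bound, using exactly the same two ingredients you isolate (uniform negativity of the linearisation and Sobolev control of the nonlinear term). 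What your route buys is a sharper description, $\bar\phi=\phi^*+\eps\,\dot\phi^*/a+\Order{\eps^2}$ in $H^1$, and a cleaner separation between the formal slow-manifold expansion and the remainder estimate. What it costs is regularity: your residual contains $\dot\psi_1$, so you need $\psi_1\in\cC^1$, i.e.\ $\phi^*\in\cC^2$ and $a\in\cC^1$, which amounts to $f$ being twice differentiable in $t$; Assumptions~\ref{assum:U_large_phi} and~\ref{assum:a(t)} (with $A_j\in\cC^1$ and $g\in\cC^2$) only guarantee that $\dot\phi^*$ exists and is bounded, which is all the paper's direct Lyapunov computation uses. So either state the extra time-regularity you are assuming, or drop the corrector and run your energy estimate directly on $\phi-\phi^*e_0$, in which case your argument essentially reproduces the paper's proof; apart from this caveat, the remaining steps (diagonal Fourier bound $\pscal{(\Delta+a)\chi}{\chi}_{H^1}\leqs -a_-\norm{\chi}_{H^1}^2$, algebra/embedding control of the polynomial remainder, continuation in the ball $\norm{\chi}_{H^1}\leqs K$) are correct and close properly.
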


In the finite-dimensional case, it is known 
(see~\cite[Theorem~2.4]{BG_pitchfork}) that solutions of the stochastic 
equation~\eqref{eq:SPDE}, starting near the equilibrium branch $\phi^*$, remain 
close to that branch with high probability. To quantify this in our 
infinite-dimensional situation, given $s>0$ we define for any $h > 0$ the set 
\begin{equation}
 \cB(h) 
 = \Bigset{(t,\phi) \colon t\in I, \norm{\phi - \bar\phi(t,\cdot)}_{H^s} < h}\;.
\end{equation} 
Given an initial condition $(0,\phi_0)$ in $\cB(h)$, the first-exit time from 
$\cB(h)$ is the stopping time 
\begin{align}
 \tau_{\cB(h)} 
 &= \inf \bigset{t>0 \colon (t,\phi(t,\cdot))\notin \cB(h)} \\
 &= \inf \bigset{t>0 \colon \norm{\phi - \bar\phi(t,\cdot)}_{H^s} \geqs h}\;.
\end{align} 
By convention, we set $\tau_{\cB(h)} = +\infty$ whenever 
$(t,\phi(t,\cdot))\in\cB(h)$ for all $t\in I$. 

\begin{theorem}[Stochastic dynamics in the stable case]
\label{thm:stable} 
For any $s\in(0,\frac12)$ and any $\nu > 0$, there 
exist constants $\kappa = \kappa(s), \eps_0, h_0$ and $C(\kappa,t,\eps,s) > 0$ 
such that, whenever $0 < \eps \leqs \eps_0$ and $0 < h \leqs h_0 \eps^{\nu}$, 
the solution of~\eqref{eq:SPDE} with initial condition $\phi(0,\cdot) = 
\bar\phi(0,\cdot)$ satisfies 
\begin{equation}
\bigprob{\tau_{\cB(h)} < t}
\leqs C(\kappa,t,\eps,s) \exp\biggset{-\kappa\frac{h^2}{\sigma^2} 
\biggbrak{1-\biggOrder{\frac{h}{\eps^\nu}}}}\;.
\end{equation}
for all $t\in I$. 
\end{theorem}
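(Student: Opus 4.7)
The plan is to introduce the deviation $\psi(t,\cdot) = \phi(t,\cdot) - \bar\phi(t,\cdot)$ from the tracking solution of \Cref{det_prop}, which solves
\begin{equation*}
\6\psi
= \frac{1}{\eps}\bigbrak{\Delta\psi + a(t,\cdot)\psi + b(t,\cdot,\psi)}\6t
+ \frac{\sigma}{\sqrt{\eps}}\6W\;,
\qquad \psi(0,\cdot)=0\;,
\end{equation*}
where $a(t,x) = \partial_\phi f(t,\bar\phi(t,x))$ is approximately spatially constant (by the $H^1$-closeness supplied by \Cref{det_prop} combined with the Sobolev embedding $H^1(\T)\hookrightarrow L^\infty(\T)$) and lies in $[-a_+,-a_-]$ up to an $\Order{\eps}$ error, while the remainder $b(t,x,\psi)$ satisfies $\abs{b(t,x,\psi)}\lesssim \psi^2$ on bounded sets, with polynomial growth at infinity controlled by \Cref{assum:U_large_phi}. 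The strategy is to compare $\psi$ to the Gaussian process $\psi^0$ obtained by dropping $b$ (the infinite-dimensional Ornstein--Uhlenbeck analogue), and then close a Gronwall-type comparison between $\psi$ and $\psi^0$, exploiting that $\norm{\psi}_{H^s}\leqs h$ up to $\tau_{\cB(h)}$.

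For the linear Gaussian part, expand $\psi^0 = \sum_k \psi^0_k e_k$ in the Fourier basis~\eqref{eq:Fourier}. Up to $\Order{\eps}$ coupling coming from the spatial dependence of $a$, the modes $\psi^0_k$ decouple into real Ornstein--Uhlenbeck processes with mean-reversion rates $\lambda_k(t)\geqs \eps^{-1}(c\jbrack{k}^2+a_-)$ and driven by $\sigma\eps^{-1/2}\6W_k$, so their time-dependent variances obey $v_k(t)\lesssim \sigma^2/(\jbrack{k}^2+a_-)$. Summing against $\jbrack{k}^{2s}$ gives
\begin{equation*}
 \bigexpec{\norm{\psi^0(t)}_{H^s}^2}
 \lesssim \sigma^2 \sum_{k\in\Z}\frac{\jbrack{k}^{2s}}{\jbrack{k}^2+a_-}\;,
\end{equation*}
which is finite precisely when $s<\tfrac12$, and is the origin of the Sobolev threshold in the theorem. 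A uniform-in-time control then follows from an exponential martingale argument applied to each weighted Fourier mode, combined with a Chernoff bound using the summability $\sum_k\jbrack{k}^{2s-2}<\infty$; this yields
\begin{equation*}
 \Bigprob{\sup_{0\leqs u\leqs t}\norm{\psi^0(u)}_{H^s}\geqs h'}
 \leqs C(\kappa,t,\eps,s)\exp\biggset{-\kappa\,\frac{(h')^2}{\sigma^2}}
\end{equation*}
for any $\kappa<\kappa(s)$, the sharp constant dictated by the above summability.

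For the nonlinear correction I would use the mild formulation
\begin{equation*}
 \psi(t) - \psi^0(t)
 = \frac{1}{\eps}\int_0^t U(t,u)\, b(u,\cdot,\psi(u))\,\6u\;,
\end{equation*}
where $U(t,u)$ is the evolution operator of $\eps^{-1}[\Delta+a(u,\cdot)]$, satisfying $\norm{U(t,u)}_{H^s\to H^s}\lesssim \e^{-a_-(t-u)/\eps}$. On $\{\tau_{\cB(h)}>t\}$ one has $\norm{\psi}_{H^s}\leqs h$, and invoking the Sobolev product and Schauder-type estimates from \Cref{annexes} and \Cref{lemma3.5} one obtains $\norm{b(u,\cdot,\psi)}_{H^s}\lesssim h\norm{\psi(u)}_{H^s}$ once $h\leqs h_0\eps^\nu$ is small enough to absorb the polynomial growth of $f$. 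A Gronwall argument then yields $\norm{\psi(t)-\psi^0(t)}_{H^s}=\Order{h^2/\eps^\nu}$, so that $\{\tau_{\cB(h)}<t\}\subset\{\sup_{u\leqs t}\norm{\psi^0(u)}_{H^s}\geqs h[1-\Order{h/\eps^\nu}]\}$, and combining with the linear Bernstein bound produces the claim. The main obstacle is precisely this nonlinear estimate in the low-regularity regime $s<\tfrac12$, where $H^s(\T)$ fails to be an algebra, so that controlling $\psi^2$ and compositions with $f$ in $H^s$ requires the sharper paraproduct/Schauder machinery of the appendix; a secondary subtlety is ensuring that summing the exponential bounds over infinitely many Fourier modes preserves the sharp Gaussian exponent $\kappa(s)$ rather than degrading it through a naive union bound.
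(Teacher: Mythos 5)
Your proposal follows essentially the same route as the paper: split $\psi=\psi^0+\psi^1$ via Duhamel, control the Gaussian part mode-by-mode (OU variances $\lesssim \sigma^2/\jbrack{k}^2$, per-mode Bernstein-type bounds, then a mode-dependent allocation of the threshold $h$ so that the union bound over $k$ does not degrade the exponent), and bound the nonlinear part deterministically before $\tau_{\cB(h)}$ using the Young-type product estimate of the appendix together with the Schauder estimate of Lemma~\ref{lemma3.5}, yielding the $h[1-\Order{h/\eps^\nu}]$ shift. The only cosmetic differences are that no Gronwall iteration is needed (since $\norm{\psi}_{H^s}\leqs h$ up to the stopping time, the Duhamel integral is estimated directly), and that your intermediate bound $\norm{b(u,\cdot,\psi)}_{H^s}\lesssim h\norm{\psi(u)}_{H^s}$ cannot hold in $H^s$ itself for $s<\tfrac12$ but must be taken in a weaker norm $H^r$, with the regularity recovered through the smoothing of the semigroup, exactly as your closing paragraph anticipates.
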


\begin{remark}
The proof yields explicit bounds on $C(\kappa,t,\eps,s)$. In particular, this 
quantity can be taken proportional to $t/\eps$, while its dependence on 
$\kappa$ and $s$ is more complicated. 
\end{remark}

\begin{remark}
The result also holds for general initial conditions $\phi(0,\cdot)$ in 
an $H^s$-neighbourhood of order $1$ of $\bar\phi(0,\cdot)$, provided one only 
considers the probability of leaving $\cB(h)$ after a time of order 
$\eps\log(\norm{\phi(0,\cdot)}_{H^s}h^{-1})$, since solutions need a time of 
that order to reach $\cB(h)$. See~\cite[Theorem~5.1.6]{Berglund_Gentz_book} for 
a precise formulation, which can be adapted to the present situation by a 
similar argument. 
\end{remark}


\subsection{Bifurcations and avoided bifurcations}
\label{ssec:results_bif} 

We now proceed to stating the main part of our results, which deal with systems 
admitting bifurcations or avoided bifurcations. As a motivating example, 
consider again the periodically forced Allen--Cahn equation 
\begin{equation}
 \6\phi(t,x) 
= \frac{1}{\eps} \bigbrak{\Delta\phi(t,x) + \phi(t,x) - \phi(t,x)^3 + A 
\cos(t)} \6t + \frac{\sigma}{\sqrt{\eps}} \6W(t,x)\;.
\end{equation}
Whenever $A < A_{\text{c}} = \frac{2}{3\sqrt{3}}$, the equation $\phi - \phi^3 
+ A\cos(t) = 0$ has exactly three solutions 
\begin{equation}
 \phi^*_1(t) < \phi^*_2(t) < \phi^*_3(t)\;.
\end{equation} 
If $t$ is replaced by a fixed parameter $t_0$, the equilibrium branches 
$\phi^*_{1,3}(t_0)$ are stable for the deterministic fast system 
\begin{equation}
\partial_t\phi(t,x) = \Delta\phi(t,x) + \phi(t,x) - \phi(t,x)^3 + A\cos(t_0)\;,
\end{equation} 
while $\phi^*_2(t_0)$ is unstable. If $A = A_{\text{c}}$, a stable branch and 
the unstable branch meet a transcritical bifurcation point whenever $t$ is a 
multiple of $\pi$. If $A$ is slightly smaller than $A_{\text{c}}$, the branches 
approach each other without quite touching. However, noise may trigger 
transitions between the branches, which is one of the basic mechanisms 
responsible for stochastic resonance. 

We will consider more general equations of the form~\eqref{eq:SPDE}, assuming 
that the drift term $f(t,\phi)$ vanishes on three equilibrium branches, two of 
which come close to each other at particular times. Whenever the three branches 
are well-separated, the dynamics near stable branches can be described by 
Theorem~\ref{thm:stable}. It is thus sufficient to describe the dynamics near 
times of bifurcation, or avoided bifurcation. By an affine change of variables, 
it is always possible to translate these (avoided) bifurcation points to the 
origin $(t,\phi) = (0,0)$. We will then make the following assumptions.

\begin{assumption}[Bifurcation point]
\label{assum:bifurcation} 
The drift term $f$ is of class $\cC^3$, and satisfies 
\begin{align}
f(t,0) &= \delta + a_1 t^2 + \Order{t^3}\;, \\
\partial_\phi f(t,0) &= \Order{t^2} \;, \\
\partial_{\phi\phi} f(0,0) &< 0
\end{align}
for constants $\delta \geqs 0$ and $a_1 > 0$.
\end{assumption}

Scaling time, space and $\phi$ appropriately, one can always assume that $a_1 = 
1$ (see Section~\ref{sec:proof_bif}). Under Assumption~\ref{assum:bifurcation}, 
one can check (see~\cite[Section~4]{BG_SR}) that in a neighbourhood of $(0,0)$, 
the drift term $f(t,\phi)$ vanishes only on two branches $\phi^*_\pm(t)$, 
satisfying 
\begin{align}
 \phi^*_\pm(t) & \asymp \pm \bigpar{\sqrt{\delta} + \abs{t}} \;, \\
 a_\pm(t) = \partial_\phi f(t,\phi^*_\pm(t)) 
 & \asymp \mp \bigpar{\sqrt{\delta} + \abs{t}}\;.  
\end{align}
In particular, $\phi^*_+$ is stable, while $\phi^*_-$ is unstable, unless 
$\delta = 0$ and $t=0$, when there is a transcritical bifurcation 
(\figref{fig:det_bif}). 

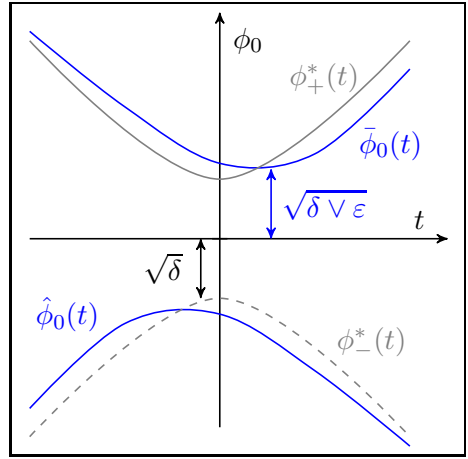
\begin{figure}
\begin{center}
\fbox{
\begin{tikzpicture}
[>=stealth',point/.style={circle,inner sep=0.035cm,fill=white,draw},
encircle/.style={circle,inner sep=0.07cm,draw},
x=2.5cm,y=2.5cm,declare function={f(\x) = sqrt((\x)^2+0.8)-0.01;
g(\x) = 0.2 -\x - sqrt(\x^2 + 0.1);}]

\draw[semithick] (-0.04,0) -- (0.04,0);
\draw[->,semithick] (0,-1) -> (0,1.2);
\draw[->,semithick] (-1,0) -> (1.2,0);

\draw[<->,semithick] (-0.1,-0.316) -- (-0.1,0);
\draw[<->,semithick,blue] (0.27,0) -- (0.27,0.37);


\draw[semithick,blue] plot[smooth,tension=.6]
  coordinates{(-1,1.1) (-0.5,0.7) (0,0.4) (0.5,0.45) (1,0.9)};

\draw[semithick,blue] plot[smooth,tension=.6]
  coordinates{(1,-1.1) (0.5,-0.7) (0,-0.4) (-0.5,-0.45) (-1,-0.9)};

  
\draw[semithick,gray] plot[smooth,tension=.6]
  coordinates{(-1,1.05) (0,0.316) (1,1.05)};

\draw[semithick,gray,dashed] plot[smooth,tension=.6]
  coordinates{(-1,-1.05) (0,-0.316) (1,-1.05)};



\node[gray] at (0.55,0.85) {$\phi^*_+(t)$};
\node[gray] at (0.8,-0.55) {$\phi^*_-(t)$};

\node[blue] at (0.9,0.5) {$\bar \phi_0(t)$};
\node[blue] at (-0.8,-0.4) {$\hat \phi_0(t)$};

\node[] at (1.05,0.1) {$t$};
\node[] at (0.15,1.05) {$\phi_0$};

\node[] at (-0.3,-0.158) {$\sqrt{\delta}$};
\node[blue] at (0.55,0.18) {$\sqrt{\delta\vee\eps}$};

\end{tikzpicture}
}
\end{center}
\vspace{-2mm}
\caption{Equilibrium branches and associated adiabatic solutions near the 
avoided bifurcation point $(0,0)$.}
\label{fig:det_bif} 
\end{figure}

In what follows, we will rewrite the SPDE~\eqref{eq:SPDE} in the form 
\begin{equation}
\label{eq:SPDE_bif} 
 \6\phi(t,x) 
 = \frac{1}{\eps} \Bigbrak{\Delta\phi(t,x) + g(t) - \phi(t,x)^2 
 - b(t,\phi(t,x))} \6t + \frac{\sigma}{\sqrt{\eps}} \6W(t,x)\;,
\end{equation} 
where 
\begin{align}
g(t) &= \delta + t^2 + \Order{t^3}\;, \\
b(t,\phi) &= \Order{\phi^3} + \Order{t\phi^2} + \Order{t^2\phi}\;.
\end{align}
It will be convenient to decompose the solution of~\eqref{eq:SPDE_bif} into its 
spatial mean and oscillating part, by writing
\begin{equation}
\label{eq:phi0_phiperp} 
 \phi(t,x) = \phi_0(t) e_0(x) + \phi_\perp(t,x)\;,
 \qquad
 \int_{\T} \phi_\perp(t,x) \6x = 0\;.
\end{equation} 
One then finds that the SPDE~\eqref{eq:SPDE_bif} is equivalent to the coupled 
SDE--SPDE system  
\begin{align}
\label{eq:phi0_phiperp_stoch} 
\6\phi_0(t) 
&= \frac{1}{\eps} \biggbrak{g(t) - \phi_0(t)^2 - b(t,\phi_0(t)e_0) 
 + b_0(t,\phi_0(t),\phi_\perp(t,\cdot))}\6t + 
\frac{\sigma}{\sqrt{\eps}}\6W_0(t)\;,\\
 \6\phi_\perp(t,x) 
&= \frac{1}{\eps} \biggbrak{\Delta\phi_\perp(t,x) + a(t,\phi_0(t)) 
\phi_\perp(t,x) 
 + b_\perp(t,\phi_0(t),\phi_\perp(t,\cdot))}\6t + 
\frac{\sigma}{\sqrt{\eps}}\6W_\perp(t,x)\;,
 \end{align} 
where $W_0(t)$ is a standard Brownian motion, $W_\perp(t,x)$ is an independent 
zero-mean space-time white noise, 
\begin{equation}
\label{eq:def_a} 
 a(t,\phi_0) = -2 \phi_0 - \frac{1}{\sqrt{L}}\partial_\phi b(t,\phi_0 e_0)\;,
\end{equation} 
while $b_0$ and $b_\perp$ are (non-local) remainders specified 
in~\eqref{eq:b0_a_bperp} below. 

\begin{prop}[Deterministic dynamics near the origin]
\label{eq:prop_phi0_det} 
The deterministic equation given by \eqref{eq:SPDE_bif} with $\sigma = 0$ 
admits a particular solution satisfying $\phi_\perp(t,x) = 0$, while $\phi_0$ 
obeys the ordinary differential equation 
\begin{equation}
\label{eq:phi0_det} 
 \eps\dot\phi_0(t) 
 = g(t) - \phi_0(t)^2 - b(t,\phi_0(t)e_0)\;.
\end{equation} 
\end{prop}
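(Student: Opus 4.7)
The plan is to look for a solution of the deterministic version of~\eqref{eq:SPDE_bif} that is spatially constant, i.e.\ of the form $\phi(t,x) = \phi_0(t) e_0(x)$, which is equivalent to demanding $\phi_\perp \equiv 0$. For any such ansatz one has $\Delta\phi \equiv 0$, and every nonlinear function of $\phi$ appearing in the drift is also constant in $x$, so the PDE should collapse to an ODE for $\phi_0$.

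The cleanest way to make this rigorous is to work directly with the coupled system~\eqref{eq:phi0_phiperp_stoch}, which has been obtained from~\eqref{eq:SPDE_bif} by Fourier projection onto $e_0$ and its orthogonal complement. The only thing I would need to check is that the coupling remainders $b_0(t,\phi_0,\phi_\perp)$ and $b_\perp(t,\phi_0,\phi_\perp)$ vanish on the slice $\set{\phi_\perp = 0}$: namely $b_0(t,\phi_0,0) = 0$ and $b_\perp(t,\phi_0,0) \equiv 0$ pointwise in $x$. This is forced by the way the decomposition is constructed, as both quantities are defined as the non-local contributions produced by substituting $\phi = \phi_0 e_0 + \phi_\perp$ into the nonlinear drift $-\phi^2 - b(t,\phi)$ and subtracting the part that survives when $\phi_\perp = 0$; they therefore automatically annihilate the zero-mean slice. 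Granted this verification, the transverse equation in~\eqref{eq:phi0_phiperp_stoch} with $\sigma = 0$ becomes the linear homogeneous equation for $\phi_\perp$, which admits $\phi_\perp \equiv 0$ as a trivial solution, while the constant-mode equation reduces exactly to~\eqref{eq:phi0_det}.

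It then remains to produce an actual solution of the resulting ODE. Since $g \in \cC^1$ and $b$ inherits $\cC^3$ regularity in $\phi$ from Assumption~\ref{assum:bifurcation}, the right-hand side of~\eqref{eq:phi0_det} is locally Lipschitz in $\phi_0$, so Picard--Lindelöf provides a local solution from any initial condition. Prolongation over the compact time interval $I$ is ensured by the coercivity of the polynomial part of $U$ in Assumption~\ref{assum:U_large_phi}: beyond a large threshold, the drift is dominated by the stabilising term coming from $-A_{2p_0}(t)\phi_0^{2p_0-1}$, which rules out finite-time blow-up.

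The main (and essentially only) substantive step is thus the algebraic verification that $b_0$ and $b_\perp$ vanish along $\set{\phi_\perp = 0}$; this is bookkeeping rather than a genuine analytic difficulty. The finer quantitative description of the two distinguished adiabatic solutions $\bar\phi_0$ and $\hat\phi_0$ depicted in Figure~\ref{fig:det_bif}, which track the stable branches $\phi^*_\pm$ at distance $\Order{\sqrt{\delta \vee \eps}}$, is outside the scope of the present statement and is presumably taken up in the subsequent slow--fast analysis of Section~\ref{sec:proof_bif}.
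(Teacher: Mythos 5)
Your proposal is correct, but it takes a genuinely different route from the paper. You argue by direct verification: the remainders $b_0$ and $b_\perp$ in~\eqref{eq:b0_a_bperp} are built entirely from $\norm{\phi_\perp}_{L^2}^2$, $\phi_\perp^2$ and the cubic remainder $R(t,\phi_0e_0,\phi_\perp)=\Order{\phi_\perp^3}$, so they vanish identically on the slice $\set{\phi_\perp=0}$; hence the transverse equation is trivially satisfied by $\phi_\perp\equiv 0$ and the mean equation collapses to~\eqref{eq:phi0_det}, whose solvability on $I$ follows from Picard--Lindel\"of plus the coercivity in Assumption~\ref{assum:U_large_phi}. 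The paper instead mimics the proof of Proposition~\ref{det_prop}: it introduces the Lyapunov function $V(\phi_\perp)=\frac12\norm{\phi_\perp}_{H^1}^2$, bounds the nonlinear terms $\pscal{\phi_\perp}{b_\perp}$ and $\pscal{\Delta\phi_\perp}{b_\perp}$ by $\Order{V^{3/2}}$ using~\eqref{eq:bound_bperp_H1} and integration by parts, and deduces $\eps\dot V\leqs -\frac12 C_1 V$, which forces $V\equiv 0$ for the solution started at $V=0$. Your tangency argument is shorter and suffices for the bare existence statement (since one only needs to exhibit one particular solution, constructing it on the invariant slice is legitimate without invoking uniqueness for the full system); the paper's energy estimate buys more, namely exponential attractivity of the slice $\set{\phi_\perp=0}$, which is the quantitative ingredient reused in the stochastic analysis of Theorem~\ref{thm:phiperp}. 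Your deferral of the fine structure of $\bar\phi_0$ and $\hat\phi_0$ matches the paper, which simply cites~\cite{BG_SR} for the analysis of the resulting ODE.
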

The equation~\eqref{eq:phi0_det} for $\phi_0(t)$ is exactly of the form 
previously analysed in the work~\cite{BG_SR}. In particular, Theorem~2.5 in 
that article states that there exists a particular solution $\bar\phi_0(t)$ 
tracking $\phi^\star_+(t)$, in the sense that there are constants $T_0, c_0 > 0$ 
such that 
\begin{equation}
 \bar\phi_0(t) - \phi^*_+(t) \asymp 
 \begin{cases}
  \displaystyle\frac{\eps}{\abs{t}} 
  &\text{for $-T_0 \leqs t \leqs -c_0(\sqrt{\delta\vee\eps})$}\;, 
\\[14pt]
  \displaystyle-\frac{\eps}{\abs{t}} 
  &\text{for $c_0\sqrt{\delta\vee\eps} \leqs t \leqs T_0$}   
 \end{cases}
\end{equation}
(\figref{fig:det_bif}). 
Furthermore, one has 
\begin{equation}
 \bar\phi_0(t) \asymp \sqrt{\delta\vee\eps}
 \qquad 
 \text{for $\abs{t} \leqs c_0\sqrt{\delta\vee\eps}$\;.}
\end{equation} 
As a consequence, the linearisation
\begin{align}
 \bar a(t,\bar\phi_0(t)) 
 &= \partial_\phi \bigbrak{g(t) - \phi^2 - b(t,\phi)} 
 \Bigr|_{\phi = \bar\phi_0(t)} \\
 &= -2 \bar\phi_0(t) - \partial_\phi b(t,\bar\phi_0(t)) 
\end{align} 
satisfies 
\begin{equation}
 \bar a(t,\bar\phi_0(t))  \asymp - \bigpar{\abs{t} \vee \sqrt{\delta\vee\eps}\,}
\end{equation} 
for all $t\in [-T_0, T_0]$. By a symmetry argument, similar results, with some 
signs reversed, hold for a particular solutions $\hat\phi_0(t)$ tracking the 
unstable equilibrium branch $\phi^*_-(t)$. 

Let $\zeta(t)$ be the solution of 
\begin{equation}
 \eps \dot\zeta(t) = 2\bar a(t,\bar\phi_0(t)) \zeta(t) + 1
\end{equation} 
with initial condition $\zeta(-T_0) = (2\abs{\bar 
a(-T_0,\bar\phi_0(-T_0))})^{-1}$. This 
function is related to the variance of the linearisation around 
$\bar\phi_0(t)$ of the equation for $\phi_0$. It can be written explicitly as 
\begin{align}
 \zeta(t) &= \frac{1}{2\abs{\bar a(-T_0,\bar\phi_0(-T_0))}} 
\e^{2\bar\alpha(t,-T_0)/\eps}
 + \frac{1}{\eps} \int_{-T_0}^t \e^{2\bar\alpha(t,t_1)/\eps} \6t_1\;, \\ 
 \bar\alpha(t,t_1) &= \int_{t_1}^t \bar a(u,\bar\phi_0(u))\6u\;.
\end{align} 
However, it is more important for what follows to know that 
\begin{equation}
\label{eq:zeta} 
 \zeta(t) \asymp \frac{1}{\abs{\bar a(t,\bar\phi_0(t))}}
 \asymp \frac{1}{\abs{t}\vee \sqrt{\delta\vee\eps} }
 \qquad 
 \forall t\in[-T_0,T_0]\;,
\end{equation} 
see~\cite[Equation~(4.18)]{BG_SR}. 
With these notations in place, we are able to define the sets 
\begin{align}
 \cB_0(h) 
 &= \Bigset{(t,\phi_0) \colon t\in [-T_0,T_0], 
 \abs{\phi_0 - \bar\phi_0(t)} < h\sqrt{\zeta(t)}}\;, \\
 \cB_\perp(h_\perp) 
 &= \Bigset{(t,\phi) \colon t\in [-T_0,T_0], 
 \norm{\phi_\perp}_{H^s} < h_\perp}\;,
\end{align} 
where $s\in(0,\frac12)$, and $h, h_\perp > 0$. 
The exit from $\cB_\perp(h_\perp)$ is described by the following analogue of 
Theorem~\ref{thm:stable}.

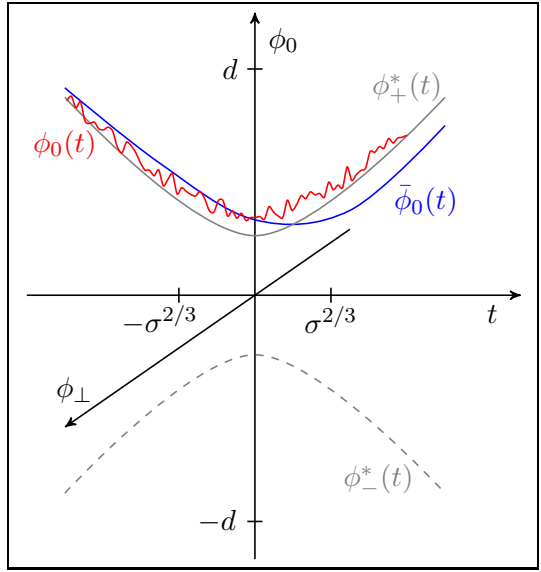
\begin{figure}
\begin{center}
\fbox{
\begin{tikzpicture}
[>=stealth',point/.style={circle,inner sep=0.035cm,fill=white,draw},
encircle/.style={circle,inner sep=0.07cm,draw},
x=2.5cm,y=2.5cm,declare function={f(\x) = sqrt((\x)^2+0.25)-0.065;
g(\x) = sqrt(\x^2 + 0.04);}]

\draw[->,semithick] (-1.2,0) -> (1.4,0);
\draw[->,semithick] (0,-1.4) -> (0,1.5);
\draw[<-,semithick] (-1,-0.7) -> (0.5,0.35);


\draw[semithick,blue] plot[smooth,tension=.6]
  coordinates{(-1,1.1) (-0.5,0.7) (0,0.4) (0.5,0.45) (1,0.9)};

\draw[semithick,gray] plot[smooth,tension=.6]
  coordinates{(-1,1.05) (0,0.316) (1,1.05)};

\draw[semithick,gray,dashed] plot[smooth,tension=.6]
  coordinates{(-1,-1.05) (0,-0.316) (1,-1.05)};

\draw[red,semithick,-,smooth,domain=-0.99:0.8,samples=75,/pgf/fpu,
/pgf/fpu/output format=fixed] plot ({\x}, {f(\x) +0.05*rand});

\node[] at (1.25,-0.1) {$t$};
\node[] at (0.15,1.35) {$\phi_0$};
\node[] at (-0.95,-0.5) {$\phi_\perp$};

\node[blue] at (0.9,0.5) {$\bar \phi_0(t)$};
\node[gray] at (0.65,-0.98) {$\phi^*_-(t)$};

\node[gray] at (0.8,1.1) {$\phi^*_+(t)$};
\node[red] at (-1.0,0.8) {$\phi_0(t)$};

\draw[semithick] (0.4,0.04) -- (0.4,-0.04);
\draw[semithick] (-0.4,0.04) -- (-0.4,-0.04);
\draw[semithick] (-0.04,1.2) -- (0.04,1.2);
\draw[semithick] (-0.04,-1.2) -- (0.04,-1.2);

\node[] at (0.4,-0.13) {$\sigma^{2/3}$};
\node[] at (-0.5,-0.13) {$-\sigma^{2/3}$};
\node[] at (-0.13,1.2) {$d$};
\node[] at (-0.2,-1.2) {$-d$};

\end{tikzpicture}
}
\vspace{-3mm}
\end{center}
\caption{Weak noise regime $\sigma\ll(\delta\vee\eps)^{3/4}$. The equilibrium 
branches $\phi^*_\pm(t)$, as well as the deterministic solution $\bar\phi_0(t)$, 
belong to the hyperplane $\set{\phi_\perp = 0}$, while $\phi_0(t)$ denotes the 
projection of the solution $\phi(t,x)$ on this hyperplane. }
\label{fig:weak_noise} 
\end{figure}

\begin{theorem}[Transverse stochastic dynamics for $\phi_\perp$]
\label{thm:phiperp} 
If $T_0$ is sufficiently small, then for any $s\in(0,\frac12)$ and any $\nu > 
0$, there exist constants $\kappa = \kappa(s), \eps_0, h^0_\perp$ and  
$C(\kappa,t,\eps,s) > 0$ such that, whenever $0 < \eps \leqs \eps_0$ and $0 < 
h_\perp \leqs h^0_\perp \eps^{\nu}$, the solution of~\eqref{eq:SPDE_bif} with 
initial condition $\phi(-T_0,\cdot) = \bar\phi_0(-T_0)e_0$ satisfies 
\begin{equation}
\bigprob{\tau_{\cB_\perp(h_\perp)} < t \wedge \tau_{\cB_0(h)}}
\leqs C(\kappa,t,\eps,s) \exp\biggset{-\kappa\frac{h_\perp^2}{2\sigma^2} 
\biggbrak{1-\biggOrder{\frac{h_\perp}{\eps^\nu}}}}\;.
\end{equation} 
The result remains true when $\tau_{\cB_0(h)}$ is replaced by 
$\inf\set{t\in[-T_0,T_0] \colon \abs{\phi_0(t)} > d}$ for any sufficiently 
small $d$ of order $1$.
\end{theorem}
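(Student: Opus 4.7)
The strategy is to view the transverse equation as a linearly stable SPDE of essentially the same type as the one handled in Theorem~\ref{thm:stable}, exploiting two stabilising mechanisms: on the stopped event $\set{t<\tau_{\cB_0(h)}}$ the coefficient $a(t,\phi_0(t))$ stays close to $\bar a(t,\bar\phi_0(t))\leqs 0$, and on every non-zero Fourier mode the Laplacian contributes the negative eigenvalue $-(k\pi/L)^2$. Choosing $T_0$ small enough ensures that $\abs{\bar\phi_0(t)}$ is uniformly small on $[-T_0,T_0]$, so that the effective transverse rate $\lambda_k(t) = -(k\pi/L)^2 + a(t,\phi_0(t))$ satisfies $\lambda_k(t)\leqs -c\jbrack{k}^2$ uniformly on the stopped event.

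I would split $\phi_\perp = \phi_\perp^{\mathrm{lin}} + R$, with $\phi_\perp^{\mathrm{lin}}$ solving the linearised equation (dropping $b_\perp$) driven by the same noise, and $R$ the correction driven by the nonlinearity. Projecting $\phi_\perp^{\mathrm{lin}}$ onto $\set{e_k}_{k\neq 0}$ yields a decoupled family of scalar Ornstein--Uhlenbeck-type SDEs
\begin{equation*}
 \6Y_k(t) = \frac{\lambda_k(t)}{\eps}Y_k(t)\6t + \frac{\sigma}{\sqrt{\eps}}\6W_k(t)\;,
\end{equation*}
whose variance is bounded by $\sigma^2/(2\abs{\lambda_k(t)})$ up to lower-order adiabatic corrections, so that $\E\norm{\phi_\perp^{\mathrm{lin}}(t)}_{H^s}^2 \lesssim \sigma^2\sum_{k\neq 0}\jbrack{k}^{2s-2}$, which converges exactly when $s<\frac12$, thereby explaining the Sobolev threshold. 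Upgrading this second-moment estimate to an exponential tail uses the supermartingale construction outlined in Section~\ref{ssec:stable_stochastic}: for the linearised equation stopped at $\tau_{\cB_0(h)}$ one obtains
\begin{equation*}
 \prob{\sup_{u\leqs t\wedge\tau_{\cB_0(h)}}\bignorm{\phi_\perp^{\mathrm{lin}}(u)}_{H^s} \geqs \tfrac12 h_\perp} \leqs C(\kappa,t,\eps,s)\exp\Bigset{-\kappa\frac{h_\perp^2}{2\sigma^2}}\;.
\end{equation*}

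For the nonlinear correction $R$, I would apply the Schauder-type estimate of Lemma~\ref{lemma3.5} to bound $\norm{R(t)}_{H^s}$ in terms of $\sup_{u\leqs t}\norm{b_\perp(u,\phi_0(u),\phi_\perp(u))}_{H^s}$. Since $b_\perp$ is of the form $\Order{\phi_\perp^2}+\Order{\phi_0\phi_\perp}+\Order{\phi_\perp^3}$, the Sobolev product inequalities collected in Appendix~\ref{annexes} give, on the stopped event, $\norm{R(t)}_{H^s} \lesssim h_\perp\bigpar{h_\perp + \sqrt{\delta\vee\eps} + h\sqrt{\zeta(t)}}$, which is $\Order{h_\perp^2/\eps^\nu}$ in the regime $h_\perp \leqs h^0_\perp \eps^\nu$. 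On the event that $\norm{\phi_\perp(t)}_{H^s}$ first attains $h_\perp$ one therefore has $\norm{\phi_\perp^{\mathrm{lin}}(t)}_{H^s} \geqs h_\perp(1-\Order{h_\perp/\eps^\nu})$, and combining with the tail bound above yields the announced estimate. The last sentence of the theorem follows from the same argument, since both stopping rules provide the uniform control on $a(t,\phi_0(t))$ needed to run the Fourier estimate.

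The main obstacle is the control of quadratic and cubic expressions in $\phi_\perp$ in the $H^s$ norm for $s<\frac12$: because $H^s(\T)$ fails to be an algebra in this range, the naive product rule $\norm{fg}_{H^s}\leqs C\norm{f}_{H^s}\norm{g}_{H^s}$ is unavailable, and one must instead invoke the sharper paraproduct and Bourdaud-type inequalities of Appendix~\ref{annexes} to control $\norm{\phi_\perp^2}_{H^s}$ and $\norm{\phi_0\phi_\perp}_{H^s}$. Tracking the constants through the Schauder step, uniformly in the stochastic coefficient $a(t,\phi_0(t))$, is what will ultimately determine the admissible range of $\nu$ and preserve the multiplicative error $1-\Order{h_\perp/\eps^\nu}$ in the exponent.
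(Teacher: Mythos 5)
Your proposal follows essentially the same route as the paper: the paper's proof simply observes that the argument of Theorem~\ref{thm:stable} carries over verbatim once one notes that on the event $\set{t<\tau_{\cB_0(h)}}$ the coefficient $a(t,\phi_0(t))$ is bounded by a constant of order $T_0$, while the Laplacian restricted to mean-zero functions has spectral gap $\pi^2/L^2$, so that for $T_0$ small the transverse linearisation is uniformly stable; your linear/nonlinear splitting, the mode-by-mode Ornstein--Uhlenbeck estimates summed with the $s<\frac12$ threshold, and the Schauder bound on the Duhamel correction are exactly the ingredients of that argument.

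One inaccuracy worth correcting: you write that $b_\perp$ is of the form $\Order{\phi_\perp^2}+\Order{\phi_0\phi_\perp}+\Order{\phi_\perp^3}$ and then claim the resulting remainder is $\Order{h_\perp^2/\eps^\nu}$. A genuine $\Order{\phi_0\phi_\perp}$ term would contribute $\Order{h_\perp\sqrt{\delta\vee\eps}}$ to the correction, which is \emph{not} $\Order{h_\perp^2/\eps^\nu}$ without an unstated lower bound on $h_\perp$, and would break the final step. In fact no such term appears: by construction~\eqref{eq:b0_a_bperp} all terms linear in $\phi_\perp$ are absorbed into the drift coefficient $a(t,\phi_0)\phi_\perp$, and $b_\perp$ is purely quadratic and higher, satisfying~\eqref{eq:bound_bperp}, so the correction is genuinely $\Order{h_\perp^2}$ times the Schauder factor and your conclusion stands.
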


As before, the result also holds for initial conditions with a transverse part 
$\phi_\perp(-T_0,\cdot)$ having $H^s$ norm up to order $1$, provided one 
considers the probability of leaving $\cB_\perp(h_\perp)$ after a time of order 
$\eps\log(\norm{\phi_\perp(-T_0,\cdot)}_{H^1}h_\perp^{-1}))$. 

On the other hand, the exit from $\cB_0(h)$ is described by the 
following result. 

\begin{theorem}[Stochastic dynamics near $\bar\phi_0(t)$]
\label{thm:phi0_stable} 
For any $t\in[-T_0, T_0]$, let 
\begin{equation}
 \hat\zeta(t) = \sup_{-T_0 \leqs s \leqs t} \zeta(s)\;.
\end{equation} 
Then there exist constants $\eps_0, h_0, c_\perp, \kappa > 0$ such 
that, whenever $0 < \eps \leqs \eps_0$, $0 < h \leqs h_0 
\hat\zeta(t)^{-3/2}$ and $0 < h_\perp <  c_\perp h \hat\zeta(t)^{1/2}$, the 
solution of~\eqref{eq:SPDE_bif} with initial condition 
$\phi(-T_0,\cdot) = \bar\phi_0(-T_0)e_0$ satisfies 
\begin{equation}
\bigprob{\tau_{\cB_0(h)} < t \wedge \tau_{\cB_\perp(h_\perp)}}
\leqs C(t,\eps) \exp\biggset{-\kappa\frac{h^2}{2\sigma^2}}\;,
\end{equation} 
where $\kappa = 1 - \Order{h\hat\zeta(t)^{3/2}}$
and $C(t,\eps) = \frac{\bar\alpha(t,-T_0)}{\eps^2} + 2$.
As before, the bound extends to general initial conditions in 
$\cB_\perp(h_\perp)$ with $(-T_0,\phi_0(-T_0))$ in $\cB_0(h)$.  
\end{theorem}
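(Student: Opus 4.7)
\medskip

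My plan is to reduce the theorem, as much as possible, to the one-dimensional analysis of~\cite{BG_SR}, treating the coupling to $\phi_\perp$ as a controlled perturbation that lives inside $\cB_\perp(h_\perp)$. The starting point is the change of variable $\xi(t) = \phi_0(t) - \bar\phi_0(t)$. Substituting into the first line of~\eqref{eq:phi0_phiperp_stoch} and using Proposition~\ref{eq:prop_phi0_det} to eliminate the deterministic part, I obtain an SDE of the form
\begin{equation*}
 \eps\,\dd\xi(t) = \bigl[\bar a(t,\bar\phi_0(t))\,\xi(t) + r(t,\xi(t)) + b_0(t,\bar\phi_0(t)+\xi(t),\phi_\perp(t,\cdot))\bigr]\dd t + \sigma\sqrt{\eps}\,\dd W_0(t),
\end{equation*}
where $r(t,\xi) = \Order{\xi^2}$ collects the quadratic and cubic corrections from $-\phi_0^2 - b(t,\phi_0 e_0)$, and $b_0$ is the coupling that vanishes when $\phi_\perp\equiv 0$.

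\medskip

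Next I would renormalise by the variance function $\zeta(t)$: writing $\xi(t) = \sqrt{\zeta(t)}\,y(t)$ (or, equivalently, comparing $\xi$ directly to the Gaussian process $\xi^0$ solving the linearised equation with zero right-hand side), the bound $\abs{\xi(t)} < h\sqrt{\zeta(t)}$ becomes $\abs{y(t)} < h$. The linearised Gaussian part satisfies $\variance(\xi^0(t)) = \sigma^2\zeta(t)$ by construction of $\zeta$, so Bernstein's inequality combined with Doob's submartingale inequality applied to the exponential martingale $\exp\{\lambda \xi^0(t)/\sigma^2 - \lambda^2\zeta(t)/(2)\}$ gives a bound $\lesssim \exp(-h^2/(2\sigma^2))$ on any short sub-interval, exactly as in~\cite[Section~4]{BG_SR}. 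A partition of $[-T_0,t]$ into $\Order{\bar\alpha(t,-T_0)/\eps^2}$ sub-intervals, each of length on which $\zeta$ varies by a bounded factor, then produces the prefactor $C(t,\eps) = \bar\alpha(t,-T_0)/\eps^2 + 2$.

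\medskip

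The main task is to show that the two perturbative terms $r$ and $b_0$ only shift the rate function by a relative error of order $h\hat\zeta(t)^{3/2}$, so that the exponent reads $\kappa = 1 - \Order{h\hat\zeta(t)^{3/2}}$. For the nonlinear term, while $\tau_{\cB_0(h)}$ has not yet occurred one has $\abs{r(t,\xi)} \lesssim \xi^2 \lesssim h\sqrt{\zeta(t)}\abs{\xi}$, so $r$ contributes an effective linear term which, after integration against $\zeta$, inflates the variance by a factor $1 + \Order{h\hat\zeta(t)^{3/2}}$; the hypothesis $h\leqs h_0\hat\zeta(t)^{-3/2}$ is exactly what keeps this factor bounded. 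For the coupling, the explicit formula for $b_0$ given in~\eqref{eq:b0_a_bperp} together with the Sobolev product estimates from Appendix~\ref{annexes} and the uniform bound $\norm{\phi_\perp}_{H^s} < h_\perp$ on the stopped event yield $\abs{b_0} \lesssim h_\perp^2$. Converting this to a perturbation of $y$ requires dividing by $\sqrt{\zeta(t)}$, and the condition $h_\perp < c_\perp h\hat\zeta(t)^{1/2}$ guarantees that the resulting contribution is $\Order{h}$, hence absorbable into the $1 - \Order{h\hat\zeta(t)^{3/2}}$ correction on $\kappa$.

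\medskip

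The main obstacle I foresee is bookkeeping the errors uniformly along the whole interval $[-T_0,t]$: near the (avoided) bifurcation point $\zeta(t)$ attains its maximum of order $(\delta\vee\eps)^{-1/2}$, so the ratios controlling the perturbative errors are sharpest there, and one must verify that the partition argument of~\cite{BG_SR} still closes when the drift term $b_0$ is present. Concretely, this requires checking that the quadratic dependence of $b_0$ on $\phi_\perp$ is genuine (no linear term in $\phi_\perp$ survives the orthogonal projection onto $e_0$), which will follow from the structure~\eqref{eq:b0_a_bperp}, and that the exponential martingale bound is not destroyed by replacing the Gaussian $\xi^0$ with its nonlinear perturbation; this is handled by a standard comparison argument as in~\cite[Theorem~2.4]{BG_pitchfork}, adapted to include the coupling term.
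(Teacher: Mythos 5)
Your proposal is correct and follows essentially the same route as the paper: linearise around $\bar\phi_0$, invoke the one-dimensional Gaussian estimate of~\cite{BG_SR} (with the partition giving the prefactor $\bar\alpha(t,-T_0)/\eps^2+2$), and control the quadratic self-interaction and the coupling $b_0$ on the event $\{t<\tau_{\cB_0(h)}\wedge\tau_{\cB_\perp(h_\perp)}\}$, where $\abs{b_0}\lesssim h_\perp^2$ and the conditions $h\leqs h_0\hat\zeta^{-3/2}$, $h_\perp\leqs c_\perp h\hat\zeta^{1/2}$ make both contributions of relative size $\Order{h\hat\zeta(t)^{3/2}}$. The only cosmetic difference is that the paper absorbs both perturbations through the additive Duhamel term $\psi_0^1$ and a level shift $h=h_0+h_1$ (rather than your variance-inflation phrasing for the quadratic term), and note that your ``contribution is $\Order{h}$'' should really be stated as $\Order{h\cdot h\hat\zeta^{3/2}}$, which is what your own chain of inequalities actually gives.
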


One consequence of this result is that there are two qualitatively different 
regimes, depending on the noise intensity:
\begin{itemize}
\item 	\textit{Weak-noise regime:} if $\sigma\ll(\delta\vee\eps)^{3/4}$, 
Theorem~\ref{thm:phi0_stable} can be applied for any $t\in[-T_0,T_0]$, and 
shows that $\phi_0(t)$ remains close to $\bar\phi_0(t)$ with high probability 
during the whole time interval (\figref{fig:weak_noise}).

\item 	\textit{Strong-noise regime:} if $\sigma\gg(\delta\vee\eps)^{3/4}$, 
Theorem~\ref{thm:phi0_stable} can only be applied up to times $t$ of order 
$-\sigma^{2/3}$, showing that $\phi_0(t)$ is unlikely to become negative up to 
times of that order (\figref{fig:strong_noise}). 
\end{itemize}

\begin{figure}
\begin{center}
\fbox{
\begin{tikzpicture}
[>=stealth',point/.style={circle,inner sep=0.035cm,fill=white,draw},
encircle/.style={circle,inner sep=0.07cm,draw},
x=2.5cm,y=2.5cm,declare function={f(\x) = sqrt((\x)^2+0.15)-0.1;
g(\x) = 0.1 -\x - sqrt(\x^2 + 0.1);}]

\draw[->,semithick] (-1.2,0) -> (1.4,0);
\draw[->,semithick] (0,-1.4) -> (0,1.5);
\draw[<-,semithick] (-1,-0.7) -> (0.5,0.35);

\draw[semithick,blue] plot[smooth,tension=.6]
  coordinates{(-1,1.1) (-0.5,0.7) (0,0.4) (0.5,0.45) (1,0.9)};

\draw[semithick,gray] plot[smooth,tension=.6]
  coordinates{(-1,1.05) (0,0.316) (1,1.05)};

\draw[semithick,gray,dashed] plot[smooth,tension=.6]
  coordinates{(-1,-1.05) (0,-0.316) (1,-1.05)};

\draw[red,semithick,-,smooth,domain=-1.3:0.9,samples=75,/pgf/fpu,
/pgf/fpu/output format=fixed] plot ({0.5*g(\x)}, {1.1*\x 
+0.1*rand});

\node[] at (1.25,-0.1) {$t$};
\node[] at (0.15,1.35) {$\phi_0$};
\node[] at (-0.95,-0.5) {$\phi_\perp$};

\node[blue] at (0.9,0.5) {$\bar \phi_0(t)$};
\node[gray] at (0.65,-0.98) {$\phi^*_-(t)$};

\node[gray] at (0.8,1.1) {$\phi^*_+(t)$};
\node[red] at (-1.0,0.8) {$\phi_0(t)$};

\draw[semithick] (0.4,0.04) -- (0.4,-0.04);
\draw[semithick] (-0.4,0.04) -- (-0.4,-0.04);
\draw[semithick] (-0.04,1.2) -- (0.04,1.2);
\draw[semithick] (-0.04,-1.2) -- (0.04,-1.2);

\node[] at (0.4,-0.13) {$\sigma^{2/3}$};
\node[] at (-0.5,-0.13) {$-\sigma^{2/3}$};
\node[] at (-0.13,1.2) {$d$};
\node[] at (-0.2,-1.2) {$-d$};
\end{tikzpicture}
}
\end{center}
\vspace{-3mm}
\caption{Strong noise regime $\sigma\gg(\delta\vee\eps)^{3/4}$. Solutions are 
likely to cross the unstable equilibrium branch $\phi^*_-(t)$.}
\label{fig:strong_noise} 
\end{figure}
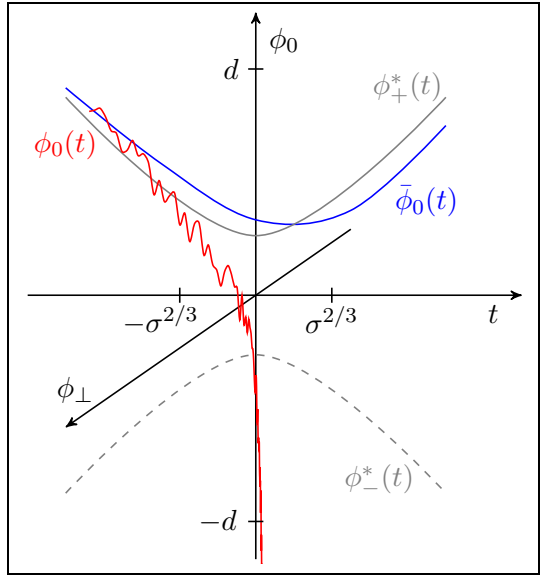

The behaviour in the strong-noise regime for times $t \geqs -c_1\sigma^{2/3}$ 
is described by the following theorem.

\begin{theorem}[Strong-noise regime]
\label{thm:phi0_strong} 
Fix sufficiently small constants $d, c_1>0$. Let $h>0$ be such that
\begin{equation}
\label{eq:phibar_upper_limit} 
 \bar\phi_0(t) + h\sqrt{\zeta(t)} \leqs d
 \qquad \forall t\in[-c_1\sigma^{2/3},c_1\sigma^{2/3}]\;.
\end{equation} 
Then there exist constants $\kappa, \bar c_\perp > 0$ 
such that for 
\begin{equation}
 0 < h_\perp < \bar c_\perp \Bigbrak{\sigma^{2/3} \wedge 
\sqrt{h}\,\hat\zeta(t)^{-1/4}}\;,
\end{equation} 
any solution of~\eqref{eq:SPDE_bif} starting at time $-c_1\sigma^{2/3}$ with an 
initial value $\phi_0$ belonging to the interval 
$(-d,\bar\phi_0(-c_1\sigma^{2/3} + \frac12 h 
\sqrt{\zeta(-c_1\sigma^{2/3})}\,)]$ satisfies 
\begin{align}
\Bigprob{\phi_0(t_1) > -d \; \forall t_1\in[-c_1\sigma^{2/3},t \wedge  
\tau_{\cB_\perp(h_\perp)}}
\leqs{}& 
\frac32 
\exp\biggset{-\kappa\frac{\hat\alpha(t,-c_1\sigma^{2/3})}
{\eps\log(\sigma^{-1})}}
\\ 
&{}+ C(t,\eps) \e^{-\kappa h^2/\sigma^2}
\label{eq:prob_phi0_strong} 
\end{align}
for all $t\in[-c_1\sigma^{2/3},c_1\sigma^{2/3}]$, where 
\begin{equation}
 \hat\alpha(t,t_1) = \int_{t_1}^t \hat a(t_2,\hat\phi_0(t_2))\6t_2\;, 
 \qquad 
 \hat a(t,\hat\phi_0(t)) 
 = \partial_\phi \bigbrak{g(t) - \phi^2 + b(t,\phi)} 
 \Bigr|_{\phi = \hat\phi_0(t)}
\end{equation} 
and $C(t,\eps)= \frac{\abs{\bar\alpha(t,-c_1\sigma^{2/3})}}{\eps^2} + 2$.
\end{theorem}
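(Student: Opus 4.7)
The plan is to reduce the problem, on the transverse containment event $\{\tau_{\cB_\perp(h_\perp)} > t\}$, to an effective one-dimensional SDE for $\phi_0$ and then invoke the sharp strong-noise analysis developed in~\cite{BG_SR} for scalar transcritical bifurcations. On that event, $\norm{\phi_\perp(s,\cdot)}_{H^s} < h_\perp$ throughout the relevant time interval. Using the Sobolev product estimates of Appendix~\ref{annexes} together with the explicit form of $b_0$, the forcing $r(t) := b_0(t,\phi_0(t),\phi_\perp(t,\cdot))$ entering the $\phi_0$-equation becomes an adapted remainder satisfying $\abs{r(t)} \lesssim h_\perp^2 + \abs{\phi_0(t)}\,h_\perp$, and the hypothesis $h_\perp < \bar c_\perp(\sigma^{2/3}\wedge\sqrt{h}\,\hat\zeta(t)^{-1/4})$ guarantees that $r$ stays subleading compared with the drift and noise scales that govern the dynamics of $\phi_0$.

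I would next shift coordinates by $y(t) = \phi_0(t) - \hat\phi_0(t)$, obtaining
\begin{equation*}
 \eps \6y = \bigbrak{\hat a(t,\hat\phi_0(t))\,y - y^2 + \Order{y^3} + r(t)} \6t
 + \sigma \6W_0(t)\;,
\end{equation*}
where $\hat a(t,\hat\phi_0(t)) > 0$ is of order $\abs{t}\vee\sqrt{\delta\vee\eps}$ throughout $[-c_1\sigma^{2/3},c_1\sigma^{2/3}]$. The event in~\eqref{eq:prob_phi0_strong} becomes the event that $y$ never crosses below a threshold of order $d$ (since $\abs{\hat\phi_0}$ is small). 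Following the partitioning strategy of~\cite{BG_SR}, I would then cut $[-c_1\sigma^{2/3},t]$ into successive subintervals $[s_k,s_{k+1}]$ of length $\asymp \eps\log(\sigma^{-1})/\hat a(s_k,\hat\phi_0(s_k))$. On each subinterval the variance of the linearised stochastic integral is $\asymp \sigma^2/\hat a(s_k,\hat\phi_0(s_k))$, which in the strong-noise regime $\sigma \gg (\delta\vee\eps)^{3/4}$ exceeds the square of the distance a Brownian bridge has to travel before the quadratic term $-y^2$ takes over and carries $y$ through the threshold. Hence the conditional probability of not triggering a transition on a single subinterval is bounded by a constant $p<1$, and iterating over $N \asymp \hat\alpha(t,-c_1\sigma^{2/3})/(\eps\log(\sigma^{-1}))$ subintervals yields the first exponential term with $\kappa = -\log p > 0$.

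Once a subinterval has succeeded in sending $y$ past the threshold, the combined unstable drift $\hat a\,y - y^2$ pushes $\phi_0$ below $-d$ deterministically in time $O(\eps/\hat a)$. A Bernstein-type a priori bound on the stochastic deviation, analogous to the one used in the proof of Theorem~\ref{thm:stable}, shows that this escape succeeds with probability at least $1 - \e^{-\kappa h^2/\sigma^2}$; this produces the second term. The prefactor $C(t,\eps) = \abs{\bar\alpha(t,-c_1\sigma^{2/3})}/\eps^2 + 2$ is the one inherited from applying Theorem~\ref{thm:phi0_stable} to control the fluctuations of $\phi_0$ around $\bar\phi_0$ before a transition is initiated.

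The main obstacle is controlling $r(t)$ finely enough to preserve the sharp exponential rate through both the partitioning and the escape phases. The scaling $h_\perp < \bar c_\perp(\sigma^{2/3}\wedge\sqrt{h}\,\hat\zeta(t)^{-1/4})$ is calibrated precisely so that the drift contribution of $r$ on each subinterval is dominated by $\sigma/\sqrt{\hat a}$, keeping $p$ uniform across all $N$ subintervals, while during the escape phase the contribution of $r$ remains negligible compared to $h\sqrt{\zeta}$. Verifying these two bounds and iterating them over $N \asymp \hat\alpha/(\eps\log(\sigma^{-1}))$ subintervals without accumulating polynomial factors that would destroy the exponent is the genuinely new element compared to the one-dimensional argument of~\cite{BG_SR}.
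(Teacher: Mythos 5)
Your overall architecture — reduce to an effective scalar SDE on the event $\{t < \tau_{\cB_\perp(h_\perp)}\}$ with a remainder of size $\Order{h_\perp^2}$, partition $[-c_1\sigma^{2/3},t]$ into $K\asymp \hat\alpha(t,-c_1\sigma^{2/3})/(\eps\log(\sigma^{-1}))$ blocks, and show that the conditional probability of \emph{not} escaping on each block is bounded by a constant $p<1$ — is exactly the paper's strategy. But two steps, as you describe them, do not go through. First, you misattribute the term $C(t,\eps)\e^{-\kappa h^2/\sigma^2}$: it does not come from the downward escape after crossing the unstable branch. In the paper it is the probability of the \emph{upper} exit $\tau_+ = \inf\{t_1 : \phi_0(t_1) > \bar\phi_0(t_1)+h\sqrt{\zeta(t_1)}\}$, which must be split off \emph{before} the block iteration, because hypothesis~\eqref{eq:phibar_upper_limit} and the local bounds on the nonlinearity only control the dynamics below $\bar\phi_0 + h\sqrt{\zeta}\leqs d$; your proof never confines $\phi_0$ from above. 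Moreover, your claim that, once past the unstable threshold, the escape to $-d$ succeeds with probability $1-\e^{-\kappa h^2/\sigma^2}$ is false: starting just below $\hat\phi_0$, the drift is only weakly repelling, and the probability of reaching $-d$ by the end of the block is only about $\tfrac12$ (an endpoint/Gaussian estimate); that $\tfrac12$ is precisely the dominant contribution to the per-block bound $Q_k\leqs\tfrac23$, not an exponentially small correction.

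Second, the per-block estimate itself is underspecified in a way that matters. Your single heuristic — the variance $\sigma^2/\hat a$ exceeds the squared distance the path must travel — fails when $h$ is near its maximal admissible size: at $\abs{t}\asymp\sigma^{2/3}$ one has $\sqrt{\zeta}\asymp\sigma^{-1/3}$, so the width $h\sqrt{\zeta}$ of the strip can be of order $1$ while the standard deviation $\sigma/\sqrt{\hat a}$ is only of order $\sigma^{2/3}$. The paper therefore splits each block into three stages: (i) a reflection-principle argument showing the path is unlikely to stay above $\bar\phi_0+\overline M_k$ for a third of the block (this is where the choice $\varrho\gtrsim\log(h/\sigma)$, hence the $\log(\sigma^{-1})$ in the exponent, is forced — not merely convenient); (ii) a Gaussian estimate showing that from near $\bar\phi_0$ the path crosses $\hat\phi_0$, using that the gap $\bar\phi_0-\hat\phi_0\lesssim c_1\sigma^{2/3}$ is small relative to $\sigma/\sqrt{\hat a}$ for $c_1$ small and $h_\perp\leqs\bar c_\perp\sigma^{2/3}$; (iii) the endpoint estimate above. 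Without stage (i), your bound on $p$ is not uniform in $h$, and the iteration does not close. These are the concrete gaps you would need to fill.
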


\begin{remark}
The condition~\eqref{eq:phibar_upper_limit} is required since we did not make 
any assumptions on the behaviour of $f$ for $x\geqs d$. For instance, our 
results apply if there exist more equilibrium branches above $d$. If, however, 
there are no such branches, as in the case of the Allen--Cahn equation with 
drift term~\eqref{eq:f_AC}, this condition can probably be relaxed. 
\end{remark}

To complete the description of the dynamics, we also need to show that once the 
process has reached level $-d$, it is also likely to reach a neighbourhood of 
the next stable equilibrium branch, where one can then apply 
Theorem~\ref{thm:stable} to describe the dynamics up to the next (avoided) 
bifurcation point (\figref{fig:strong_d0}). This can be easily done via the 
following analogue of~\cite[Proposition~4.7]{BG_SR}. 

\begin{prop}[Reaching level $-d_0 < -d$]
\label{prop:d0} 
There exists a constant $M>0$ such that if the drift 
term $f$ satisfies 
\begin{equation}
 f(t,\phi) \leqs -f_0 - Mh_\perp^2
 \qquad \forall (t,x) \in [-T_0,T_0] \times [-d_0, -d + \rho]
\end{equation} 
for some constants $d_0, f_0 > 0$ and $\rho \in (0,d)$, then there exist 
constant $\tilde c, \tilde \kappa > 0$ such that for all $t_0\in[-T_0,T_0 - 
\tilde c\eps]$, the solution of~\eqref{eq:SPDE_bif} with initial condition 
$\phi_0(t_0) = -d$ satisfies 
\begin{equation}
 \bigprob{\phi_0(t_1) > -d_0 \; \forall t_1 \in [t_0,(t_0 + 
\tilde c\eps)\wedge\tau_{\cB_\perp}(h_\perp)]} 
\leqs \e^{-\tilde\kappa/\sigma^2}\;.
\end{equation}
\end{prop}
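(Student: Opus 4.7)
The plan is to adapt the one-dimensional argument of~\cite[Proposition~4.7]{BG_SR} to our coupled SDE--SPDE setting. The key new step is to show that the nonlocal contribution $b_0(t,\phi_0,\phi_\perp)$ to the drift of $\phi_0$ in~\eqref{eq:phi0_phiperp_stoch} is of order $h_\perp^2$ whenever $\|\phi_\perp\|_{H^s}\leqs h_\perp$, so that it gets absorbed by the $Mh_\perp^2$ margin built into the hypothesis; once this is in place, a standard comparison with Brownian motion closes the argument.

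For the drift bound, I would write the nonlinear term in the equation for $\phi_0$ as $\pscal{f(t,\phi_0 e_0+\phi_\perp)}{e_0}_{L^2}$ and Taylor-expand $f$ to second order in its second argument about the constant function $\phi_0 e_0$. The linear term in $\phi_\perp$ vanishes upon integration by $\pscal{\phi_\perp}{e_0}_{L^2}=0$, while the quadratic remainder is bounded uniformly by $C_1 h_\perp^2$ on the compact range of $\phi_0$ of interest. For the bounded part of $f$ this follows from the $\cC^2$ bounds in Assumption~\ref{assum:U_large_phi}; for the polynomial part one uses the H\"older-type Sobolev product inequalities of Appendix~\ref{annexes}, since a naive $L^\infty$ bound on $\phi$ is unavailable for $s<\tfrac12$. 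Choosing $M\geqs C_1$, one concludes that as long as $\phi(t,\cdot)$ takes values in $[-d_0,-d+\rho]$ and $\|\phi_\perp\|_{H^s}\leqs h_\perp$, the effective drift satisfies $\eps\dot\phi_0\leqs -\tilde f_0:=-f_0/2$. Set $\tilde c:=2(d_0-d)/\tilde f_0$.

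Let $A$ denote the event to be bounded, $\tau_\perp:=\tau_{\cB_\perp(h_\perp)}$ and $\tau_\rho:=\inf\{t>t_0:\phi_0(t)\geqs-d+\rho\}$. Split $A=A_1\cup A_2$ according to whether $\tau_\rho\geqs(t_0+\tilde c\eps)\wedge\tau_\perp$ (sub-case $A_1$) or not (sub-case $A_2$). On $A_1$ the drift bound applies throughout the interval; integrating the $\phi_0$-equation up to $t_0+\tilde c\eps$ (the sub-case $\tau_\perp<t_0+\tilde c\eps$ being analogous) yields
\[
-d_0\,<\,\phi_0(t_0+\tilde c\eps)\,\leqs\,-d-\tilde f_0\tilde c+\frac{\sigma}{\sqrt\eps}W_0(\tilde c\eps)\;,
\]
which forces $\frac{\sigma}{\sqrt\eps}W_0(\tilde c\eps)>d_0-d$, an event of Gaussian probability at most $\exp\{-(d_0-d)^2/(2\sigma^2\tilde c)\}$. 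On $A_2$ the drift bound holds on $[t_0,\tau_\rho]$ and $\phi_0(\tau_\rho)-\phi_0(t_0)=\rho$ forces $\sup_{0\leqs s\leqs\tilde c\eps}\frac{\sigma}{\sqrt\eps}W_0(s)\geqs\rho$, of probability at most $2\exp\{-\rho^2/(2\sigma^2\tilde c)\}$ by the reflection principle. Summing the two estimates gives the claim with $\tilde\kappa:=\tfrac14\min\{(d_0-d)^2,\rho^2\}/\tilde c$.

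The main obstacle in this plan is the first step: establishing the uniform $\Order{h_\perp^2}$ control of the quadratic remainder requires the Sobolev calculus of Appendix~\ref{annexes} to handle the polynomial part of $f$, rather than a naive $L^\infty$ embedding that would require $s>\tfrac12$. Once this estimate is in place, the remainder of the proof involves only elementary Gaussian tail estimates for the one-dimensional Brownian motion $W_0$, and proceeds in close analogy to the finite-dimensional proof in~\cite{BG_SR}.
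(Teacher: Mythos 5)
Your proposal follows essentially the same route as the paper's proof: the transverse contribution to the drift of $\phi_0$ is absorbed into the margin $Mh_\perp^2$ (the paper does not redo the Taylor/Sobolev argument at this point; it simply takes $M=M_2$, the constant of the already established bound~\eqref{eq:bound_b0} on $b_0$, which is exactly the estimate you re-derive), and one then compares $\phi_0$ on the range $[-d_0,-d+\rho]$ with the drifted Brownian motion $-d-\frac{1}{\eps}f_0(t-t_0)+\frac{\sigma}{\sqrt{\eps}}W_{t-t_0}$, splits at the intermediate level $-d+\rho$ via the stopping time you call $\tau_\rho$ (the paper's $\tau_+$), and concludes with Gaussian/reflection-type tail estimates for $W_0$, as in~\cite[Proposition~4.7]{BG_SR}. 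Your choice $\tilde c=2(d_0-d)/\tilde f_0$ versus the paper's condition $\tilde c>(d_0-d+\rho)/f_0$, and the resulting value of $\tilde\kappa$, are equivalent bookkeeping.

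The one step that does not go through as written is the claim that the sub-case $\tau_{\cB_\perp(h_\perp)}<t_0+\tilde c\eps$ inside your event $A_1$ is \emph{analogous}. It is not: if the transverse exit occurs early, the stopped interval $[t_0,\tau_{\cB_\perp(h_\perp)}]$ can be much shorter than $\tilde c\eps$, the drift has no time to carry $\phi_0$ down to $-d_0$, and the event that $\phi_0$ stays above $-d_0$ on this short interval is in fact likely, so it cannot be forced into a large deviation of $W_0$ and no Gaussian tail applies. That scenario has to be handled by a different ingredient, namely the smallness of $\bigprob{\tau_{\cB_\perp(h_\perp)}<\,\cdot\,}$ provided by the transverse estimate (the analogue of Theorem~\ref{thm:phiperp}); this is precisely how the paper proceeds, by splitting off the event $\set{\tau_+\wedge\tau_{\cB_\perp(h_\perp)}<t\wedge\tau_-}$ and controlling it \lq\lq as in the preceding results\rq\rq\ rather than by a variant of the integration argument. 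Apart from this point, which you should either treat by invoking the transverse exit estimate or by formulating the conclusion on the event $\set{\tau_{\cB_\perp(h_\perp)}\geqs t_0+\tilde c\eps}$, your argument matches the paper's proof.
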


\begin{figure}
\begin{center}
\fbox{
\begin{tikzpicture}
[>=stealth',point/.style={circle,inner sep=0.035cm,fill=white,draw},
encircle/.style={circle,inner sep=0.07cm,draw},
x=2.5cm,y=2.5cm,declare function={f(\x) = sqrt((\x)^2+0.15)-0.1;
g(\x) = 0.1 -\x - sqrt(\x^2 + 0.1);}]

\draw[->,semithick] (-1.2,0) -> (1.4,0);
\draw[->,semithick] (0,-2) -> (0,1.5);
\draw[<-,semithick] (-1,-0.7) -> (0.5,0.35);

\draw[semithick,blue] plot[smooth,tension=.6]
  coordinates{(-1,1.1) (-0.5,0.7) (0,0.4) (0.5,0.45) (1,0.9)};

\draw[semithick,gray] plot[smooth,tension=.6]
  coordinates{(-1,1.05) (0,0.316) (1,1.05)};

\draw[semithick,gray] plot[smooth,tension=.6]
  coordinates{(-1,-1.55) (0,-1.85) (1,-1.55)};

\draw[semithick,gray,dashed] plot[smooth,tension=.6]
  coordinates{(-1,-1.05) (0,-0.316) (1,-1.05)};

\draw[red,semithick,-,smooth,domain=0.04:0.9,samples=75,/pgf/fpu,
/pgf/fpu/output format=fixed] (0.04, -1.9) -- plot ({\x}, {-1.85 + 0.35*\x*\x 
+0.1*rand});
\draw[red,semithick,-,smooth,domain=-1.7:1.02,samples=75,/pgf/fpu,
/pgf/fpu/output format=fixed] plot ({0.5*g(\x)}, {1.1*\x 
+0.2*rand});

\node[] at (1.25,-0.1) {$t$};
\node[] at (0.15,1.35) {$\phi_0$};
\node[] at (-0.95,-0.5) {$\phi_\perp$};

\node[blue] at (0.9,0.5) {$\bar \phi_0(t)$};
\node[gray] at (0.65,-0.98) {$\phi^*_-(t)$};
\node[gray] at (0.85,-1.8) {$\phi^*(t)$};
\node[gray] at (0.8,1.1) {$\phi^*_+(t)$};
\node[red] at (-1.0,0.8) {$\phi_0(t)$};

\draw[semithick] (0.4,0.04) -- (0.4,-0.04);
\draw[semithick] (-0.4,0.04) -- (-0.4,-0.04);
\draw[semithick] (-0.04,1.2) -- (0.04,1.2);
\draw[semithick] (-0.04,-1.2) -- (0.04,-1.2);
\draw[semithick] (-0.04,-1.6) -- (0.04,-1.6);

\node[] at (0.4,-0.13) {$\sigma^{2/3}$};
\node[] at (-0.5,-0.13) {$-\sigma^{2/3}$};
\node[] at (-0.13,1.2) {$d$};
\node[] at (-0.2,-1.2) {$-d$};
\node[] at (-0.2,-1.6) {$-d_0$};
\end{tikzpicture}
}
\end{center}
\vspace{-3 mm}\caption{Strong noise regime, behaviour after reaching level 
$-d$. If the drift term is negative, bounded away from zero, in an interval 
$[-d_0,-d]$, solutions are likely to reach $-d_0$ after another time of order 
$\eps$.}
\label{fig:strong_d0} 
\end{figure}
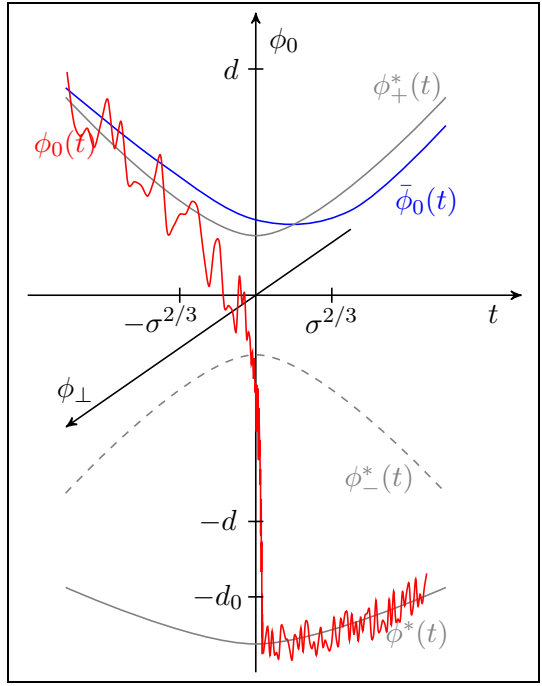


\subsection{Discussion}
\label{ssec:results_discussion} 

Let us first consider the weak-noise regime $\sigma \ll 
(\delta\vee\eps)^{3/4}$. For Theorems~\ref{thm:phiperp} 
and~\ref{thm:phi0_stable} to yield useful results, we need 
\begin{equation}
\label{eq:hhperp_min} 
 h \gg \sigma\;, \qquad h_\perp \gg \sigma\;.
\end{equation} 
If $t \geqs 0$, then $\hat\zeta(t) \asymp (\delta\vee\eps)^{-1/2}$. For the 
theorems to be applicable, we then need the conditions 
\begin{equation}
 h \lesssim (\delta\vee\eps)^{3/4}\;, \qquad 
 h_\perp \lesssim h (\delta\vee\eps)^{-1/4} \wedge \eps^\nu\;,
\end{equation} 
where $\nu>0$ can be chosen arbitrarily small. The weak-noise condition implies 
that all conditions on $h$ and $h_\perp$ can indeed be met simultaneously. In 
particular, since the minimal value of $\bar\phi_0(t)$ has order 
$(\delta\vee\eps)^{1/2}$, we can take $h$ of order $(\delta\vee\eps)^{3/4}$, 
and $h_\perp$ of order $(\delta\vee\eps)^{1/2} \wedge \eps^\nu$. We thus obtain 
\begin{align}
 \bigprob{\exists t\in[-T_0,T_0] \colon \phi_0(t) < 0}
 &\leqs \bigprob{\tau_{\cB_0(h)} \wedge \tau_{\cB_\perp(h_\perp)} < T_0} \\
 &= \bigprob{\tau_{\cB_\perp(h_\perp)} < T_0\wedge \tau_{\cB_0(h)}}
 + \bigprob{\tau_{\cB_0(h)} < T_0\wedge \tau_{\cB_\perp(h_\perp)}} \\
 &\leqs C_1(\eps) \exp\biggset{-\kappa\frac{(\delta\vee\eps)^{3/2} 
\wedge \eps^{2\nu}}{\sigma^2}}\;.
\label{eq:summary_weak} 
\end{align}
The term $\eps^{2\nu}$ can be disregarded as soon as $\delta$ is sufficiently 
small. In other words, the probability of making a transition from a 
neighbourhood of the stable branch $\phi^*_+$ to the unstable branch $\phi^*_-$ 
or to the other stable branch is exponentially small, with a parameter of order 
$(\delta\vee\eps)^{3/2}/\sigma^2$. 

Consider now the strong-noise regime $\sigma\geqs(\delta\vee\eps)^{3/4}$. We 
still require the conditions~\eqref{eq:hhperp_min} to hold, but modify the 
upper bounds on $h$ and $h_\perp$. As long as $t < -c_1\sigma^{2/3}$, 
Theorem~\ref{thm:phi0_stable} can be applied with $h \lesssim \abs{t}^{3/2}$, 
yielding 
\begin{equation}
 \bigprob{\tau_{\cB_0(h)} \wedge \tau_{\cB_\perp(h_\perp)} < t} 
 \leqs C(t,\eps) \exp\biggset{-\kappa'\frac{\abs{t}^3}{\sigma^2}}
\end{equation} 
for some $\kappa'>0$. This shows in particular that $\phi_0$ is unlikely to 
reach $0$ before times of order $-\sigma^{2/3}$. 

To see what happens for larger times, we do no longer use 
Theorem~\ref{thm:phi0_stable}, but only Theorems~\ref{thm:phiperp} 
and~\ref{thm:phi0_strong}, applied to an interval of the form 
$[-c_1\sigma^{2/3},-c_2\sigma^{2/3}]$. Then 
$\hat\alpha(-c_2\sigma^{2/3},-c_1\sigma^{2/3})$ has order $\sigma^{4/3}$ and
the conditions on $h$ and $h_\perp$ can be summarised as
\begin{equation}
 h \lesssim \sigma^{1/3}\;, \qquad 
 h_\perp \lesssim \sqrt{h} \sigma^{-1/3} 
 \wedge \sigma^{2/3} \wedge \eps^\nu\;. 
\end{equation} 
In particular, it is possible to take $h$ of order $\sigma^{1/3}$ and $h_\perp$ 
of order $\sigma^{2/3} \wedge \eps^\nu$. This yields 
\begin{equation}
\label{eq:summary_strong} 
 \bigprob{ \phi_0(t) > -d_0 \; \forall t\in[-c_1\sigma^{2/3},-c_2\sigma^{2/3}]} 
 \leqs \frac32 \exp\biggset{-\kappa'\frac{\sigma^{4/3}}{\eps\log(\sigma^{-1})}}
 + \biggOrder{\frac{\sigma^{4/3}}{\eps^2} \e^{-\kappa'/\sigma^{4/3}}}\;.
\end{equation} 
To summarise, we have thus obtained that with a probability exponentially close 
to $1$, the transverse component $\phi_\perp$ of the solution remains small in 
$H^1$ norm, while the spatial mean $\phi_0$ behaves in the same way as the 
solution of the one-dimensional SDE studied in~\cite{BG_SR}. In particular, 
there exist a weak-noise regime in which transitions between stable equilibria 
are very unlikely, cf.~\eqref{eq:summary_weak}, and a strong-noise regime, in 
which transitions are very likely, see~\eqref{eq:summary_strong}. 

An interesting question that remains open so far, is what can be said on 
regimes where the periodic forcing has a smaller amplitude, so that one stays 
in the weak-noise regime, but transitions still become likely over very long 
time spans. In the one-dimensional case, very precise results on the 
distribution of transition times have been obtained, for instance, 
in~\cite{BG_periodic2,NB_slowly_oscillating_20}. Generalising these results to 
the infinite-dimensional situation would require a good understanding of the 
effect of the dynamics of $\phi_\perp$ on transition times. 


\section{Proofs: the stable case}
\label{sec:proof_stable} 


\subsection{Deterministic case}
\label{ssec:stable_deterministic}

In this subsection, we give the proof of Proposition~\ref{det_prop} on the 
deterministic dynamics near a stable equilibrium branch $\phi^*(t) e_0$. We 
thus consider the deterministic equation
\begin{equation}
\label{eq:PDE_det_stable} 
\eps \partial_t \phi(t,x) 
= \Delta\phi(t,x) + f(t,\phi(t,x))\;,
\end{equation} 
where $t\in I = [0,T]$ and $f$ satisfies Assumptions~\ref{assum:U_large_phi} 
and~\ref{assum:a(t)}. We are interested in the deviation from the 
equilibrium branch, given by the difference 
$\psi(t,\cdot)=\phi(t,\cdot)-\phi^*(t)e_0$. Using Taylor's formula to expand 
$f(t,\phi^*(t)e_0 + \psi)$, we obtain that $\psi$ satisfies the equation 
\begin{equation}
\label{eq:SPDE_det_psi}
\eps \partial_t \psi(t,x) 
= \Delta\psi(t,x) + a(t)\psi(t,x) + 
b(t,\psi(t,x))-\eps\frac{\6}{\6t}\phi^*(t)e_0(x)\;,
\end{equation} 
where 
\begin{align}
a(t) &= \partial_\phi f(t,\phi^*(t)e_0)\;, \\
b(t,\psi) &= \frac{1}{2}\partial_\phi^2 f\big(t,\phi^*(t)+\theta 
\psi\big)\psi^2 \qquad\text{ for some } \theta \in \brak{0,1}\;.
\end{align}
This shows in particular that there exist constants $d, M > 0$ such that
\begin{equation}
\label{eq:bound_btpsi} 
\bigabs{b(t,\psi)}\leqs M \psi^2\;,\qquad
\bigabs{\partial_\psi b(t,\psi)} \leqs M \abs{\psi} 
\end{equation} 
for all $t \in I$ and all $\psi\in\R$ such that $\abs{\psi} < d$. 

\begin{proof}[Proof of Proposition~\ref{det_prop}]
Following the main idea of the proof in~\cite{Tihonov} in the 
finite-dimensional case, we define a Lyapunov function 
\begin{equation}
 \label{eq:Lyapunov}
 V(\psi) = \frac12 \norm{\psi}_{H^1}^2 
 = \frac12 \norm{\psi}_{L^2}^2 + \frac{L^2}{2\pi^2} 
\norm{\nabla\psi}_{L^2}^2\;. 
\end{equation} 
Let $\pscal{\cdot}{\cdot}$ denote the $L^2$ inner product. Observing that 
$\norm{\nabla\psi}_{L^2}^2 = \pscal{\nabla\psi}{\nabla\psi} = 
-\pscal{\psi}{\Delta\psi}$, and using self-adjointness of the Laplacian, we 
obtain that the time derivative of the Lyapunov function along a solution 
of~\eqref{eq:PDE_det_stable} satisfies
\begin{align}
 \eps\frac{\6}{\6t} V(\psi(t,\cdot)) 
 ={}& \pscal{\psi}{\eps\partial_t\psi}
 - \frac{L^2}{\pi^2} \pscal{\Delta\psi}{\eps\partial_t\psi} \\
 ={}& \pscal{\psi}{\Delta\psi} + a(t) \norm{\psi}_{L^2}^2 + 
\pscal{\psi}{b(t,\psi)} - \eps \frac{\6}{\6t} \phi^*(t) \pscal{\psi}{e_0} \\
&{}- \frac{L^2}{\pi^2} \Bigbrak{\norm{\Delta\psi}_{L^2}^2 
+ a(t) \pscal{\Delta\psi}{\psi} 
+ \pscal{\Delta\psi}{b(t,\psi)}}\;.
\end{align}
In the last line, we have used the fact that $\pscal{\Delta\psi}{e_0} = 0$
(here and below, we sometimes write $\psi$ instead of $\psi(t,\cdot)$ in order 
not to overload the notation). Regrouping terms, and bounding some obviously 
negative terms above by zero, we get 
\begin{equation}
\label{eq:dtV_det_stable1} 
 \eps\frac{\6}{\6t} V (\psi) 
 \leqs 2a(t) V(\psi) + \pscal{\psi}{b(t,\psi)} 
- \frac{L^2}{\pi^2}\pscal{\Delta\psi}{b(t,\psi)} 
- \eps \frac{\6}{\6t} \phi^*(t) \pscal{\psi}{e_0}\;.
\end{equation} 
Let $C_0 > 0$ be a constant to be fixed below. Assume that 
$\norm{\psi(0,\cdot)}_{H^1} < C_0$, and define the first-exit time 
\begin{equation}
\bar\tau = \inf \bigset{t>0 \colon \norm{\psi(t,\cdot)}_{H^1} \geqs C_0}\;.
\end{equation} 
By convention, we set $\bar\tau = \infty$ whenever 
$\norm{\psi(t,\cdot)}_{H^1} < C_0$ for all $t\in I$. 
Thus, for all $t \leqs \bar\tau$ in $I$, we have $\norm{\psi(t,\cdot)}_{H^1} 
< C_0$.
By Sobolev's inequality, this implies that for these $t$, one has  
\begin{equation}
 \abs{\psi(t,x)} 
 \leqs \norm{\psi(t,\cdot)}_{L^\infty} 
 \leqs \CSob \norm{\psi(t,\cdot)}_{H^1} \leqs \CSob C_0
\end{equation} 
for all $x\in\R$ and some numerical constant $\CSob$. By~\eqref{eq:bound_btpsi}, 
provided $\CSob C_0 \leqs d$, it follows that 
\begin{equation}
 \abs{b(t,\psi(t,x))} 
 \leqs \CSob M \norm{\psi(t,\cdot)}_{H^1}^2\;,
\end{equation} 
and thus 
\begin{equation}
 \norm{b(t,\psi(t,\cdot))}_{L^2}^2 
 \leqs \CSob^2 M^2 L \norm{\psi(t,\cdot)}_{H^1}^4\;.
\end{equation} 
By the Cauchy--Schwarz inequality, we get 
\begin{equation}
 \bigabs{\pscal{\psi(t,\cdot)}{b(t,\psi(t,\cdot))}} 
 \leqs \norm{\psi(t,\cdot)}_{L^2} \norm{b(t,\psi(t,\cdot))}_{L^2}
 \leqs \CSob M L^{1/2} \norm{\psi(t,\cdot)}_{H^1}^3\;.
\end{equation} 
Furthermore, integration by parts and~\eqref{eq:bound_btpsi} yield 
\begin{align}
 \bigabs{\pscal{\Delta\psi(t,\cdot)}{b(t,\psi(t,\cdot))}} 
 &= \biggabs{\int_0^L \nabla\psi(t,x)^2 \partial_\psi b(t,\psi(t,x)) \6x} \\
 &\leqs M \norm{\psi(t,\cdot)}_{L^\infty} \norm{\nabla\psi(t,\cdot)}_{L^2}^2\\
 &\leqs \CSob M \norm{\psi(t,\cdot)}_{H^1}^3\;.
\end{align}
Finally, owing to the implicit function theorem and 
Assumption~\ref{assum:a(t)}, the derivative of $\phi^*(t)$ is bounded 
by a constant $c$, so that 
\begin{equation}
 \biggabs{\frac{\6}{\6t} \phi^*(t) \pscal{\psi(t,\cdot)}{e_0}}
 \leqs c \norm{\psi(t,\cdot)}_{L^2} 
 \leqs c \norm{\psi(t,\cdot)}_{H^1}\;. 
\end{equation} 
Plugging the last three estimates in~\eqref{eq:dtV_det_stable1}, since $a(t)$ 
is negative and bounded away from zero by Assumption~\ref{assum:a(t)}, we 
obtain that $V(t) = V(\psi(t,\cdot))$ satisfies 
\begin{align}
\eps \dot V 
&\leqs -C_1V + C_2 V^{3/2} + \eps C_3 V^{1/2} \\
&\leqs -C_1\biggbrak{1 - \frac{C_0^{1/2}C_2}{C_1}}V + \eps C_3 V^{1/2}
\label{borne_lyapunov_1}
\end{align}
for all $t \leqs \bar\tau$, and some constants $C_1, C_2, C_3 > 0$. Choosing 
$C_0$ such that $C_0^{1/2} \leqs \frac{C_1}{2C_2}$, we obtain 
\begin{equation}
 \eps \dot V \leqs -\frac12 C_1V + \eps C_3 V^{1/2}
\end{equation} 
for all $t \leqs \bar\tau$. Setting $V(t) = Z(t)^2$ and dividing by $2Z(t)$, we 
get
\begin{equation}
 \eps \dot Z \leqs -\frac14 C_1 Z + \frac12 C_3 \eps\;.
\end{equation} 
Since the variable $W = Z - \frac{2C_3}{C_1}\eps$ satisfies $\eps\dot W \leqs 
-\frac14C_1 W$,  Gronwall's inequality yields 
\begin{equation}
 W(t) \leqs W(0) \e^{-C_1t/(4\eps)}
\end{equation} 
for all $t \leqs \bar\tau$. Thus for any $W(0)$ of order $\eps$, we find that 
$Z(t)$ remains of order $\eps$ for all $t < \bar\tau$, and thus $V(t)$ remains 
of order $\eps^2$. Choosing $\eps_0$ small enough and $0 < \eps < \eps_0$, we 
obtain in particular that $V(t) < C_0$ for all $t < \bar\tau$, so that 
assuming $\bar\tau < T$ would lead to a contradiction. We conclude that 
$\bar\tau \geqs T$, showing that $V(t) = \Order{\eps^2}$ for all $t\in I$, 
which is the claimed result.
\end{proof}

\begin{remark}
Another choice of Lyapunov function would have been  
\begin{equation}
\label{eq:func_lyapunov2}
V(t,\psi)=\norm{\psi}^2_{H^1}  + \int_{0}^{L} U_1(t,\psi(x)) \6x\;,
\end{equation} 
where $U_1(t,\psi) = U(t,\phi^*(t)e_0+\psi)$ is a shifted version of the 
potential $U$ introduced in~\eqref{eq:def_U}. This function is useful to 
control the behaviour of solutions of large $H^1$-norm. Indeed, 
one can show that there exist constants $M_1, M_2 > 0$ such that 
\begin{equation}
\label{borne_lyapunov0} 
 - M_1 
 \leqs V(t,\psi)\leqs M_2 \Bigpar{1 + \norm{\psi}^{2p_0}_{H^1}} 
 \qquad \forall t\in I
\end{equation} 
for all $t\in I$, and that $V(t,\psi(t,\cdot))$ is decreasing at least 
exponentially fast when it is large.
\end{remark}


\subsection{Stochastic case}
\label{ssec:stable_stochastic} 

We turn now to the analysis of the stochastic equation~\eqref{eq:SPDE} with 
$\sigma > 0$. The equation for the deviation $\psi(t,x) = \phi(t,x) - 
\bar\phi(t,x)$ of the solution from the deterministic solution tracking 
the stable equilibrium branch $\phi^*(t)e_0$ reads  
\begin{equation}
\label{eq:SPDE_nolin}
\6\psi(t,x) =\frac{1}{\eps}\bigbrak{\Delta\psi(t,x)+\bar 
a(t)\psi(t,x)+b(t,\psi(t,x))}\6t+ \frac{\sigma}{\sqrt{\eps}}\6W(t,x)\;,
\end{equation} 
where 
\begin{equation}
 \bar a(t) = \partial_\phi f(t,\bar \phi(t,x))
\end{equation} 
and $b(t,\psi)$ denotes again a nonlinear term, satisfying bounds analogous 
to~\eqref{eq:bound_btpsi}. We will start by analysing the linear case where $b$ 
vanishes in Subsection~\ref{sssec:stable_linear}, before turning to the general 
nonlinear case in Subsection~\ref{sssec:stable_nonlinear}. 

\subsubsection{Linear case}
\label{sssec:stable_linear} 

We consider first the linear version of~\eqref{eq:SPDE_nolin} given by 
\begin{equation}
\label{eq:SPDE_lin}
\6\psi(t,x) =\frac{1}{\eps}[\Delta\psi(t,x)+\bar a(t)\psi(t,x)]\6t+ 
\frac{\sigma}{\sqrt{\eps}}\6W(t,x)\;.
\end{equation}
Denote the eigenvalues of $-\Delta$ by
\begin{equation}
\label{eq:ev_Laplacian} 
 \mu_k = \frac{k^2\pi^2}{L^2}\;,
 \qquad 
 k \in \Z\;.
\end{equation}
Projecting~\eqref{eq:SPDE_lin} on the $k$th basis vector $e_k$, we obtain 
\begin{equation}
\label{eq:SPDE_lin_k}
\6\psi_k(t)=\frac{1}{\eps}\bar 
a_k(t)\psi_k(t)\6t+\frac{\sigma}{\sqrt{\eps}}\6W_k(t)\;,
\end{equation}
where $\bar a_k(t)=-\mu_k+\bar a(t)$ and the $\set{W_k(t)}_{t\geqs0}$ are 
independent Wiener processes (see for instance~\cite{Jetschke_86}). 
The solution of~\eqref{eq:SPDE_lin_k} is a Gaussian process and can be 
represented by the Ito integral (cf.~Duhamel's principle)
\begin{equation}
\label{eq:SPDE_lin_k_sol}
 \psi_k(t)=\frac{\sigma}{\sqrt{\eps}}\int_0^t 
 \e^{\bar\alpha_k(t,t_1)/\eps}
 \6W_k(t_1),
\end{equation}
where $\bar\alpha_k(t,t_1)=\int_{t_1}^t\bar a_k(t_2)\6t_2$. Thus, for each time 
$t$, $\psi_k(t)$ is characterised by its mean being zero and its variance given 
by 
\begin{equation}
\label{eq:SPDE_lin_k_sol_var}
\variance{\{\psi_k(t)\}}=\frac{\sigma^2}{\eps}\int_0^t 
\e^{2\bar\alpha_k(t,t_1)/\eps}
\6t_1\;.
\end{equation}
We may further assume that there are positive constants $\bar a_\pm$ and $c_0^{\pm}$ such that for all $t\in I$ 
\begin{align}
\label{a_k(t)} 
-\bar a_+&\leqs\bar a(t)\leqs-\bar a_-\;,\\
c_0^-\jbrack{k}^2\leqs \mu_k+\bar a_-&\leqs\bigabs{\bar a_k(t)}\leqs \mu_k+\bar a_+\leqs c_0^+\jbrack{k}^2\;,
\end{align} 
and due to the implicit function theorem we also have the existence of a 
constant $C$ such that  
\begin{equation}
\bigabs{\bar a(t)'}\leqs C \qquad \forall t\in I = [0,T]\;.
\end{equation} 

\begin{lemma}[Bound on the variance]
There exists a constant $C_0>0$ such that the variance satisfies the bound 
\begin{equation}
\label{eq:bound_Vk} 
 \variance{\{\psi_k(t)\}}\leqs C_0\frac{\sigma^2}{\jbrack{k}^2}
 \qquad \forall t\in I\;.
\end{equation} 
\end{lemma}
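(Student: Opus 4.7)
The plan is to plug the spectral bounds on $\bar a_k(t)$ directly into the explicit variance formula~\eqref{eq:SPDE_lin_k_sol_var} and carry out a one-variable exponential integral. Since $\bar a(t) \leqs -\bar a_- < 0$ by Assumption~\ref{assum:a(t)} (applied to $\bar \phi(t,\cdot)$), one has $\bar a_k(t_2) \leqs -(\mu_k + \bar a_-)$ for every $t_2 \in I$, so that
\begin{equation*}
 \bar\alpha_k(t,t_1) = \int_{t_1}^t \bar a_k(t_2)\6t_2
 \leqs -(\mu_k + \bar a_-)(t - t_1)
 \leqs -c_0^- \jbrack{k}^2 (t-t_1)\;,
\end{equation*}
using the lower bound $c_0^-\jbrack{k}^2 \leqs \mu_k + \bar a_-$ assumed in~\eqref{a_k(t)}.

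Substituting this exponential bound into~\eqref{eq:SPDE_lin_k_sol_var} and performing the change of variables $u = (t-t_1)/\eps$, I obtain
\begin{equation*}
 \variance{\{\psi_k(t)\}}
 \leqs \frac{\sigma^2}{\eps} \int_0^t \e^{-2c_0^- \jbrack{k}^2 (t-t_1)/\eps}\6t_1
 = \sigma^2 \int_0^{t/\eps} \e^{-2c_0^- \jbrack{k}^2 u}\6u
 \leqs \frac{\sigma^2}{2 c_0^- \jbrack{k}^2}\;,
\end{equation*}
which gives the claimed bound with $C_0 = 1/(2c_0^-)$, uniformly in $t\in I$ and in $k\in\Z$.

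There is really no obstacle here: the bound is a direct consequence of the spectral gap of $-\Delta + \bar a(t)$, which grows quadratically in $k$, while the noise input per mode is independent of $k$. The only point to keep straight is the sign convention — it is important that $\bar a(t)$ itself be negative and bounded away from zero, so that the two contributions $\mu_k$ and $-\bar a(t)$ to $|\bar a_k(t)|$ add constructively rather than compete, ensuring the uniform decay rate $c_0^- \jbrack{k}^2$ even for $k = 0$ where $\mu_0 = 0$.
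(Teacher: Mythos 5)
Your proof is correct: the uniform bound $\abs{\bar a_k(t)} \geqs \mu_k + \bar a_- \geqs c_0^-\jbrack{k}^2$ from~\eqref{a_k(t)} lets you dominate the exponent in~\eqref{eq:SPDE_lin_k_sol_var} by $-2c_0^-\jbrack{k}^2(t-t_1)/\eps$, and the resulting elementary integral gives $\variance\{\psi_k(t)\}\leqs \sigma^2/(2c_0^-\jbrack{k}^2)$ uniformly in $t$ and $k$, which is exactly the claimed bound with $C_0 = 1/(2c_0^-)$. The route is genuinely different from the paper's, though: there the variance integral is handled by integration by parts, writing the integrand as $\frac{1}{2\bar a_k(s)}\cdot\frac{2\bar a_k(s)}{\eps}\e^{2\bar\alpha_k(t,s)/\eps}$, which produces the boundary term $\frac{1}{2\abs{\bar a_k(t)}}$ plus a remainder of order $\eps\abs{\bar a_k}^{-3}$ controlled via the bound $\abs{\bar a'(t)}\leqs C$ coming from the implicit function theorem. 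The paper's method buys a sharper, asymptotically exact statement — the variance equals $\sigma^2/(2\abs{\bar a_k(t)})$ up to a relative error $\Order{\eps}$, tracking the instantaneous linearisation in the adiabatic spirit — at the cost of invoking the regularity of $\bar a$. Your direct bound needs no information on $\bar a'$ and is shorter, and since the lemma only asserts an upper bound of the form $C_0\sigma^2/\jbrack{k}^2$, it fully suffices for the way the lemma is used afterwards (in Lemma~\ref{lem:psi_k} and the linear case of Theorem~\ref{thm:stable}); the only price is a possibly less tight constant $C_0$.
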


\begin{proof}
Using integration by parts, we obtain 
\begin{align}
 \frac{\variance{\{\psi_k(t)\}}}{\sigma^2}
 &= \int_0^t \frac{1}{2\bar a_k(s)} \frac{2\bar a_k(s)}{\eps}
 \e^{2\bar\alpha_k(t,s)/\eps}
\6s \\
&=\frac{1}{2\bar a_k(t)}-\frac{1}{2\bar a_k(0)}
\e^{2\bar\alpha_k(t)/\eps}
+\frac{1}{2}\int_0^t\frac{\bar a'_k(s)}{\bar a_k(s)^2}
\e^{2\bar\alpha_k(t,s)/\eps}
\6s\;,
 \end{align}
where we write $\bar\alpha_k(t,0)=\bar\alpha_k(t)$ for brevity.
The absolute value of integral can be bounded by 
\begin{equation}
\int_0^t\frac{\bigabs{\bar a_k'(s)}}{(-\mu_k-\bar a_-)^2}
\e^{-2(\mu_k+\bar a_-)(t-s)/\eps}
\6s
 \leqs\frac{C}{2(\mu_k+\bar a_-)^3}\eps\;.
\end{equation}
Therefore,
\begin{equation}
\frac{\variance{\{\psi_k(t)\}}}{\sigma^2}
\leqs\frac{1+\mathcal{O}(\eps)}{2(\mu_k+\bar a_-)}\leqs 
C_0\frac{1}{\jbrack{k}^2},
\end{equation}
as claimed. 
\end{proof}

Since each $\psi_k(t)$ is a one-dimensional process, we can easily adapt 
Theorem~2.4 in~\cite{BG_pitchfork} to obtain the following estimate.

\begin{lemma}
\label{lem:psi_k} 
Fix $\gamma > 0$. Then for any $k\in\Z$, we have the bound 
\begin{equation}
 \biggprob{\sup_{t\in I}\abs{\psi_k(t)}\geqs h}
\leqs C_k(T,\eps)\exp\biggset{-\kappa\jbrack{k}^2\frac{h^2}{\sigma^2}}\;,
\end{equation} 
where $C_k(T,\eps)=\frac{2c_0^-\jbrack{k}^2}{\gamma\eps}T$ and 
$\kappa=\frac{\e^{-2\gamma}}{2C_0}$. 
\end{lemma}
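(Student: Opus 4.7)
The strategy is to adapt the one-dimensional sample-path estimate from Theorem~2.4 of~\cite{BG_pitchfork} to the Gaussian process $\psi_k$, using the variance bound~\eqref{eq:bound_Vk}. Since $\psi_k(t)$ is the Wiener integral of a deterministic integrand against $W_k$, it is a centered Gaussian process, so at any fixed $t$ the one-dimensional Gaussian tail bound gives
\[
\bigprob{\abs{\psi_k(t)}\geqs h}
\;\leqs\; 2\exp\biggset{-\frac{\jbrack{k}^2 h^2}{2C_0\sigma^2}}\;.
\]

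To upgrade this pointwise bound to a supremum bound, I would partition $I=[0,T]$ along the intrinsic relaxation timescale of $\psi_k$, which is of order $\eps/\jbrack{k}^2$. Concretely, introduce a grid $0=t_0<t_1<\dots<t_N=T$ defined by $\abs{\bar\alpha_k(t_{n+1},t_n)}/\eps=\gamma$; the two-sided bound~\eqref{a_k(t)} on $\bar a_k$ then yields $N\leqs c_0^-\jbrack{k}^2 T/(\gamma\eps)$. On each sub-interval the Markov decomposition
\[
\psi_k(t)=\e^{\bar\alpha_k(t,t_n)/\eps}\psi_k(t_n)+Y_n(t)\;,
\qquad
Y_n(t)=\frac{\sigma}{\sqrt{\eps}}\int_{t_n}^t\e^{\bar\alpha_k(t,t_1)/\eps}\6W_k(t_1)
\]
exhibits $Y_n$ as a centered Gaussian independent of $\psi_k(t_n)$, and an argument analogous to the one proving~\eqref{eq:bound_Vk} controls its variance on $[t_n,t_{n+1}]$ by the same quantity $C_0\sigma^2/\jbrack{k}^2$.

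The key quantitative step is to convert the pointwise bound into a bound on $\sup_{t\in[t_n,t_{n+1}]}\abs{\psi_k(t)}$ losing only a factor $\e^{2\gamma}$ in the exponent. Since $\bar\alpha_k\leqs 0$, the deterministic prefactor $\e^{\bar\alpha_k(t,t_n)/\eps}\in[0,1]$, so no growth is introduced by the first term in the decomposition. For the stochastic term $Y_n(t)$, the kernel $\e^{2\bar\alpha_k(t,t_1)/\eps}$ governing its variance differs from $\e^{2\bar\alpha_k(t_{n+1},t_1)/\eps}$ by at most a factor $\e^{2\gamma}$ by construction of the grid, and applying an exponential-supermartingale (Doob maximal) inequality to the Gaussian $Y_n$ gives
\[
\biggprob{\sup_{t\in[t_n,t_{n+1}]}\abs{\psi_k(t)}\geqs h}
\;\leqs\; 2\exp\biggset{-\frac{\e^{-2\gamma}\jbrack{k}^2 h^2}{2C_0\sigma^2}}\;.
\]
A union bound over the $N$ sub-intervals then produces the claimed estimate with $\kappa=\e^{-2\gamma}/(2C_0)$ and prefactor $2N=C_k(T,\eps)$.

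The main obstacle is the sup-bound on each sub-interval with the sharp $\e^{-2\gamma}$ degradation: this is what makes $\gamma$ a genuinely free parameter, allowing one to trade the exponent $\kappa$ against the $\gamma^{-1}$ in the prefactor. The partition, variance estimate, and union bound are all routine once the $\e^{-2\gamma}$ factor has been isolated, and the remainder of the argument is a direct transcription of the proof of~\cite[Theorem~2.4]{BG_pitchfork}, with the single-process variance replaced by~\eqref{eq:bound_Vk}.
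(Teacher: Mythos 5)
Your overall strategy coincides with the paper's: the published proof is a two-line reduction to \cite[Theorem~2.4]{BG_pitchfork}, partitioning $[0,T]$ according to $\bar\alpha_k(u_{l+1},u_l)=-\gamma\eps$, approximating the process by a martingale on each subinterval, and applying a Bernstein-type inequality derived from Doob's submartingale inequality, with the variance bound~\eqref{eq:bound_Vk} supplying the $\jbrack{k}^{-2}$ dependence. The partition, the union bound over $N$ subintervals, and the role of $\gamma$ in trading the prefactor against the factor $\e^{-2\gamma}$ in $\kappa$ are all in the right place in your proposal.

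The key per-interval estimate, however, is not established by the argument you give. You decompose $\psi_k(t)=\e^{\bar\alpha_k(t,t_n)/\eps}\psi_k(t_n)+Y_n(t)$, dismiss the first term because the prefactor lies in $[0,1]$, and then apply the exponential-supermartingale inequality to $Y_n$ alone, concluding the stated bound on $\bigprob{\sup_{[t_n,t_{n+1}]}\abs{\psi_k(t)}\geqs h}$. This does not follow: the per-interval event also contains paths on which $\abs{\psi_k(t_n)}$ is already of order $h$ (it is an unconditioned Gaussian with variance up to $C_0\sigma^2/\jbrack{k}^2$), and paths on which neither term alone exceeds $h$ but their sum does; a tail bound on $\sup\abs{Y_n}$ together with the remark that the prefactor is a contraction controls neither situation. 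The repair is exactly the martingale approximation invoked by the paper: on $[t_n,t_{n+1}]$ consider
\begin{equation}
 N_t \;=\; \e^{-\bar\alpha_k(t,t_n)/\eps}\,\psi_k(t)
 \;=\; \psi_k(t_n) + \tilde M_t\;,
 \qquad
 \tilde M_t = \frac{\sigma}{\sqrt{\eps}}\int_{t_n}^{t}\e^{-\bar\alpha_k(s,t_n)/\eps}\6W_k(s)\;,
\end{equation}
which is a Gaussian martingale; since $\abs{\psi_k(t)}\leqs\abs{N_t}$ on the subinterval, and since by independence of $\psi_k(t_n)$ and $\tilde M$ one has $\variance(N_{t_{n+1}})=\e^{-2\bar\alpha_k(t_{n+1},t_n)/\eps}\variance(\psi_k(t_{n+1}))=\e^{2\gamma}\variance(\psi_k(t_{n+1}))\leqs\e^{2\gamma}C_0\sigma^2/\jbrack{k}^2$, Doob's inequality applied to $\e^{\lambda N_t}$ and $\e^{-\lambda N_t}$ yields the per-interval bound $2\exp\bigpar{-\e^{-2\gamma}\jbrack{k}^2h^2/(2C_0\sigma^2)}$, automatically incorporating the initial value; this is where your two correct ingredients (independence of $\psi_k(t_n)$ and $Y_n$, and the $\e^{2\gamma}$ comparison of variances) actually enter. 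A minor further point: bounding the number of subintervals requires the upper bound $\abs{\bar a_k(t)}\leqs c_0^+\jbrack{k}^2$ rather than the lower one, so the constant in the prefactor should involve $c_0^+$ (a slip your proposal shares with the constant displayed in the lemma).
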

\begin{proof}
As in \cite[Theorem~2.4]{BG_pitchfork}, we introduce  
a partition $0=u_0<u_1<...<u_N=T$ of $\brak{0,T}$ by requiring 
$\bar\alpha_k(u_{l+1},u_l)=-\gamma\eps$ for $1\leqs l\leqs 
N=\lfloor{c_0^-\jbrack{k}^2}T/({\gamma\eps})\rfloor$. The proof then follows by 
approximating the process by a martingale on each interval $[u_l,u_{l+1}]$ and 
using a Bernstein-type inequality that follows directly from Doob's 
submartingale inequality. 
\end{proof}

\begin{proof}[Proof of Theorem \ref{thm:stable} in the linear case]
Fix constants $\eta$, $\rho > 0$ and $s\in (0, \frac{1}{2})$ such that 
$s=\frac{1}{2}-\rho$. For every decomposition $h^2=\sum_{k\in\Z}h_k^2$ one has
\begin{align}
\bigprob{\tau_{\cB(h)} < t}
&=\biggprob{\sup_{t\in I}\norm{\psi(t,\cdot)}^2_{H^s}\geqs h^2}\;\\
&=\biggprob{\sup_{t\in I}\sum_{k\in\Z}\jbrack{k}^{2s}\abs{\psi_k(t)}^2\geqs h^2}\;\\
&\leqs \sum_{k\in\Z}\biggprob{\sup_{t\in I}\abs{\psi_k(t)}^2\geqs h_k^2\jbrack{k}^{-2s}}\;\\
&\leqs 
\sum_{k\in\Z}C_k(T,\eps)\exp\biggset{-\kappa\frac{h_k^2}{\sigma^2}\jbrack{k}^{
2-2s } }\;.
\end{align} 
Choosing 
\begin{equation}
\label{eq:choice_hk} 
 h_k^2=C(\eta,s)h^2\jbrack{k}^{-2+2s+\eta}\;,
\end{equation} 
the condition $h^2=\sum_{k\in\Z}h_k^2$ yields 
\begin{equation}
 C(\eta,s)=\frac{1}{\displaystyle\sum_{k\in\Z}\jbrack{k}^{-2+2s+\eta}}\;.
\end{equation} 
Since the Riemann zeta function $\zeta(v) =\sum_{n\geqs1}n^{-v}$ converges for 
$v>1$, we get 
\begin{equation}
\sum_{k\in\Z}\frac{1}{\jbrack{k}^{2-2s-\eta}}\leqs 
1 + 2\sum_{k=1}^\infty \frac{1}{k^{2-2s-\eta}}
= 1 + 2\zeta(2-2s-\eta)<\infty \qquad 
\forall 
0<\eta<2\rho\;.
\label{eq:Riemann} 
\end{equation} 
With $h_k$ given by~\eqref{eq:choice_hk} and $\eta$ satisfying this condition, 
we get
\begin{align}
\biggprob{\sup_{t\in I}\norm{\psi(t,\cdot)}^2_{H^s}\geqs h^2}\;
&\leqs \sum_{k\in\Z}C_k(T,\eps)\exp\biggset{-\kappa 
C(\eta,s)\frac{h^2}{\sigma^2}\jbrack{k}^{\eta}}\;\\
&=\alpha_T\sum_{k\in\Z}\jbrack{k}^2\e^{-\beta\jbrack{k}^{\eta}}\;,
\end{align} 
where we write $\alpha_T=\frac{2c_0^-}{\gamma\eps}T$ and $\beta=\kappa 
C(\eta,s)\frac{h^2}{\sigma^2}$ for simplicity.
In order to bound the sum, we write 
\begin{equation}
 f(x) = (1+x^2)\e^{-\beta(1+x^2)^{\eta/2}}\;.
\end{equation} 
Note that we may assume that $f$ is decreasing by taking 
$h/\sigma$ larger than an $\eta$-dependent constant of order $1$ (which we 
may do, because otherwise the result is trivially true). 
Therefore, we obtain 
\begin{equation}
\sum_{k\in\Z}f(k) 
= f(0) + 2 \sum_{k=1}^\infty f(k) 
\leqs \e^{-\beta} + 2\int_0^\infty f(x)\6x\;.
\end{equation} 
In what follows, we show that the integral
\begin{equation}
I=\int_0^\infty f(x)\6x=\int_0^\infty (1+x^2)\e^{-\beta(1+x^2)^{\eta/2}} \6x\;
\end{equation} 
is finite, and, more precisely, has order $\beta^{-1/2}\e^{-\beta}$. We first 
make the change of variable $y=\beta(1+x^2)^{\eta/2}$, yielding 
\begin{equation}
I=\frac{1}{\eta\beta^{4/\eta}}\int_\beta^\infty\e^{-y}\frac{y^{4/\eta-1}}{\sqrt{
\bigpar{\frac{y}{\beta}}^{2/\eta}-1}} \6y\;.
\end{equation} 
The further change of variable $y=\beta+z$ gives 
\begin{equation}
I=\frac{\e^{-\beta}}{\eta\beta^{4/\eta}}\int_0^\infty\e^{-z}\frac{(\beta+z)^{
4/\eta-1}}{\sqrt{(1+\frac{z}{\beta})^{2/\eta}-1}} \6z\;.
\end{equation} 
Using Taylor's formula, we get the lower bound 
\begin{equation}
\Bigpar{1+\frac{z}{\beta}}^{2/\eta}-1
\geqs \frac{2}{\eta}\frac{z}{\beta}\;.
\end{equation} 
Therefore, 
\begin{align}
I&\leqs 
\frac{\e^{-\beta}}{\eta\beta^{4/\eta}}\sqrt{\frac{\beta\eta}{2}}
\int_0^\infty\frac{\e^{-z}}{\sqrt{z}}(\beta+z)^{4/\eta-1} \6z\;\\
&= \frac{\e^{-\beta}}{\eta\beta^{4/\eta}}\sqrt{\frac{\beta\eta}{2}} 
\Biggbrak{\int_0^\beta\frac{\e^{-z}}{\sqrt{z}}(\beta+z)^{4/\eta-1} \6z + 
\int_\beta^\infty\frac{\e^{-z}}{\sqrt{z}}(\beta+z)^{4/\eta-1} \6z}\;\\
&\leqs 
\frac{\e^{-\beta}}{\sqrt{2\eta}\beta^{4/\eta-1/2}}(2\beta)^{4/\eta-1}
\int_0^\beta\frac{\e^{-z}}{\sqrt{z}}\6z + 
\frac{\e^{-\beta}}{\sqrt{2\eta}\beta^{4/\eta-1/2}}2^{4/\eta-1}
\int_\beta^\infty\frac{\e^{-z}}{\sqrt{z}}z^{4/\eta-1} \6z\;\\
&\leqs c_1(\eta)\frac{\e^{-\beta}}{\sqrt{\beta}} + 
c_2(\eta)\frac{\e^{-\beta}}{\beta^{4/\eta-1/2}}\;,
\end{align} 
where $c_1(\eta)$ and $c_2(\eta)$ are bounded uniformly in $\beta$, provided 
$\eta < 8$. It follows that
\begin{align}
\sum_{k\in\Z}\jbrack{k}^2\e^{-\beta\jbrack{k}^{\eta}}
&\leqs  \e^{-\beta} + 2c_1(\eta)\frac{\e^{-\beta}}{\sqrt{\beta}} + 
2c_2(\eta)\frac{\e^{-\beta}}{\beta^{4/\eta-1/2}}\;\\
&=\e^{-\kappa C(\eta,s)h^2/\sigma^2} \Bigbrak{1+\bar c_1(\eta)\frac{\sigma}{h} 
+ \bar c_2(\eta)\Bigpar{\frac{\sigma^2}{h^2}}^{4/\eta-1/2}}\;.
\end{align} 
We thus conclude that 
\begin{align}
\biggprob{\sup_{t\in I}\norm{\psi(t,\cdot)}^2_{H^s}\geqs h^2}
&\leqs \alpha_T\Bigbrak{1+\bar c_1(\eta)\frac{\sigma}{h} + \bar 
c_2(\eta)\Bigpar{\frac{\sigma^2}{h^2}}^{4/\eta-1/2}}\e^{-\kappa 
C(\eta,s)h^2/\sigma^2} \\
&=: C(\gamma,T,\eps,s) \e^{-\kappa 
C(\eta,s)h^2/\sigma^2}\;,
\label{th2.4_lin}
\end{align} 
where we can fix, for instance, $\eta = \rho = \frac12 - s$, which yields 
$C(\eta,s) = [1 + 2\zeta(\frac32-s)]^{-1}$ by~\eqref{eq:Riemann}.
\end{proof}


\subsubsection{Nonlinear case}
\label{sssec:stable_nonlinear}

We return now to the study of the general nonlinear 
equation~\eqref{eq:SPDE_nolin}. By Duhamel's principle, its solution satisfies 
the equation 
\begin{align}
 \psi(t,\cdot) 
 &= \frac{\sigma}{\sqrt{\eps}}\int_0^t 
\e^{\bar\alpha(t,t_1)/\eps}\e^{[(t-t_1)/\eps]\Delta} \6W(t_1,\cdot)
 + \frac{1}{\eps}\int_0^t \e^{\bar\alpha(t,t_1)/\eps}\e^{[(t-t_1)/\eps]\Delta} 
b(t_1,\psi(t_1,\cdot)) \6t_1\;, \\
 &= \psi^0(t,\cdot) + \psi^1(t,\cdot)\;.
 \label{eq:psi0psi1} 
\end{align} 
Here $\bar\alpha(t,t_1) = \int_{t_1}^t \bar a(u)\6u$, and $\e^{t\Delta}$ 
denotes the heat kernel.
We notice that $\psi^0(t,x)$ is the solution of the linear 
equation~\eqref{eq:SPDE_lin}, and therefore satisfies the estimate 
\eqref{th2.4_lin}. 

In what follows, we give some technical results that will be needed several 
times in order to show that $\psi^1(t,\cdot)$ belongs to a certain Sobolev 
space included in $H^s$.

\begin{lemma}
\label{lemma3.3}
Let the potential $U(t,\phi)$ satisfy Assumption~\ref{assum:U_large_phi}, and 
assume $\psi(t,\cdot)\in H^s$ for all $0<s<\frac{1}{2}$. Then 
\begin{equation}
 \beta(t) = b(t,\psi(t,\cdot))
\end{equation} 
belongs to $H^r$ for all $r < \frac12$. Furthermore, for all 
$r<\frac12 - (2p_0+1)(\frac12 -s)$,  
there exists $C(r,s)<\infty$ such that 
\begin{equation}
\label{eq:bound_norm_beta} 
\norm{\beta(t)}_{H^r}\leqs 
C(r,s)\max\{\norm{\psi}_{H^s}^2,\norm{\psi}^{2p_0+1}_{H^s}\}\;. 
\end{equation}
\end{lemma}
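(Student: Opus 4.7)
The proof plan is to split $b$ according to the decomposition $U = P + g$ in Assumption~\ref{assum:U_large_phi}, writing $b(t,\psi) = b_P(t,\psi) + b_g(t,\psi)$, and then to estimate each piece in $H^r$ using the product and composition estimates for fractional Sobolev spaces recalled in Appendix~\ref{annexes}. The key quantitative fact on the torus (in spatial dimension one) that I will use repeatedly is: if $s_1,s_2\geqs 0$ with $s_1+s_2>\frac12$, then
\begin{equation*}
\norm{\varphi_1\varphi_2}_{H^r} \lesssim \norm{\varphi_1}_{H^{s_1}}\norm{\varphi_2}_{H^{s_2}}
\qquad\text{for } r\leqs \min(s_1,s_2)\text{ and } r < s_1+s_2-\tfrac12.
\end{equation*}

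For the polynomial part, since $b_P(t,\psi)$ comes from subtracting the constant and linear terms of the Taylor expansion of $-\partial_\phi P(t,\bar\phi+\cdot)$, it is a polynomial in $\psi$ with no constant or linear term and degree bounded by a number $\leqs 2p_0+1$, whose coefficients are bounded functions of $t$ (depending on $\bar\phi$ and the $A_j$'s, which are uniformly bounded by assumption). Applying the product estimate with $s_1=s_2=s$ once gives $\norm{\psi^2}_{H^{2s-\frac12-\tau}} \lesssim \norm{\psi}_{H^s}^2$; iterating $j-1$ times yields $\norm{\psi^j}_{H^{r_j}} \lesssim \norm{\psi}_{H^s}^j$ with $r_j = \frac12 - j(\frac12-s) - \tau$. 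The standing hypothesis $r<\frac12-(2p_0+1)(\frac12-s)$ lets one choose $\tau>0$ small enough so that $H^{r_j}\hookrightarrow H^r$ for every relevant $j\in\{2,\dots,2p_0+1\}$, and summing these bounds produces a control of the form $\max(\norm{\psi}_{H^s}^2,\norm{\psi}_{H^s}^{2p_0+1})$.

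For the bounded part, a Taylor expansion with integral remainder gives
\begin{equation*}
b_g(t,\psi(t,x)) = -\psi(t,x) \int_0^1 \bigbrak{\partial_{\phi\phi}g(t,\bar\phi(t,x)+u\psi(t,x)) - \partial_{\phi\phi}g(t,\bar\phi(t,x))}\6u = \psi(t,x)\,G_\psi(t,x),
\end{equation*}
where $\norm{G_\psi}_{L^\infty}\leqs 2M$ uniformly by Assumption~\ref{assum:U_large_phi}. To pass from the pointwise bound to an $H^r$ bound I invoke a composition (symbolic calculus) estimate in fractional Sobolev spaces of the kind proved by Bourdaud~\cite{Bourdaud_calcul_symbolique} (this is precisely the reference thanked in the acknowledgments): for $s\in(0,\frac12)$, composition with a $C^1$ function whose derivative is bounded (here $\partial_{\phi\phi}g(t,\cdot)$, given $\bar\phi\in H^1$) defines a bounded nonlinear map on $H^s$. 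This yields $\norm{G_\psi}_{H^s} \lesssim 1+\norm{\psi}_{H^s}$, after which the product estimate gives $\norm{\psi G_\psi}_{H^r}\lesssim \norm{\psi}_{H^s}(1+\norm{\psi}_{H^s})$, which is absorbed into the same $\max(\norm{\psi}_{H^s}^2,\norm{\psi}_{H^s}^{2p_0+1})$ bound (the $\norm{\psi}_{H^s}$ factor goes with the $\norm{\psi}_{H^s}^{2p_0+1}$ part when $\norm{\psi}_{H^s}$ is large and the $\norm{\psi}_{H^s}^2$ term dominates when $\norm{\psi}_{H^s}$ is small).

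The main obstacle is the $b_g$ piece: because $s<\frac12$ we are \emph{below} the Sobolev embedding $H^{1/2}\not\hookrightarrow L^\infty$, so $\psi$ need not be bounded and one cannot simply multiply a pointwise Lipschitz bound by $\norm{\psi}_{L^\infty}$. The whole subtlety of the lemma lies in deploying the Bourdaud-type composition estimate together with the Sobolev product rule at precisely the threshold exponent $r = \frac12-(2p_0+1)(\frac12-s)$ dictated by the highest power of $\psi$ appearing in $b_P$; the polynomial part by itself is routine iteration of a product inequality.
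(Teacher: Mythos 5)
Your treatment of the polynomial part is essentially the paper's own argument (iterated application of the Young-type product estimate of the appendix, with the exponent bookkeeping giving exactly the threshold $r<\frac12-(2p_0+1)(\frac12-s)$), and that half is fine modulo minor bookkeeping of the $x$-dependence of the coefficients through $\bar\phi\in H^1$. The genuine gap is in the bounded part. You factor $b_g=\psi\,G_\psi$ with $G_\psi=-\int_0^1\bigbrak{\partial_{\phi\phi}g(t,\bar\phi+u\psi)-\partial_{\phi\phi}g(t,\bar\phi)}\6u$ and then claim $\norm{G_\psi}_{H^s}\lesssim 1+\norm{\psi}_{H^s}$ by a Bourdaud-type composition estimate applied to the outer function $\partial_{\phi\phi}g(t,\cdot)$. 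A composition estimate on $H^s$, $0<s<1$, requires the outer function to be uniformly Lipschitz, i.e.\ here a bound on $\partial_\phi^3 g$; but Assumption~\ref{assum:U_large_phi} only gives $g\in\cC^2$ with $\abs{\partial_{\phi\phi}g}\leqs M$, so $\partial_{\phi\phi}g$ is merely bounded and continuous, your parenthetical claim that it is ``$C^1$ with bounded derivative'' is an unwarranted strengthening of the hypotheses, and $G_\psi$ need not belong to $H^s$ at all. The paper avoids this by applying Bourdaud's composition theorem one derivative lower: the bounded contribution to $\beta(t)$ is itself a composition with the bounded part of the drift (i.e.\ with $\partial_\phi g$, whose Lipschitz constant is $\leqs M$ precisely because $\partial_{\phi\phi}g$ is bounded), hence lies in $H^s$ with norm controlled by the $H^s$ norm of the inner function, without factoring out $\psi$ and without ever touching third derivatives of $g$.

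A second, independent problem: even if your bound on $G_\psi$ held, the resulting estimate $\norm{\psi G_\psi}_{H^r}\lesssim\norm{\psi}_{H^s}\bigpar{1+\norm{\psi}_{H^s}}$ behaves like $\norm{\psi}_{H^s}$ when $\norm{\psi}_{H^s}$ is small, which is \emph{larger}, not smaller, than $\norm{\psi}_{H^s}^2$; your absorption remark has the inequality backwards, so the claimed bound~\eqref{eq:bound_norm_beta} would not follow from your estimate. To recover the quadratic small-$\psi$ behaviour along your route you would need $\norm{G_\psi}_{H^s}\lesssim\norm{\psi}_{H^s}$, i.e.\ an $H^s$ bound on a difference of compositions, which demands yet more regularity of $g$ than Assumption~\ref{assum:U_large_phi} provides.
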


\begin{proof}
Consider first the case where $U$ is a polynomial in $\psi$ of degree $2p_0$. 
Then $f(t,\psi)$ and $\beta(t)$ are polynomials of degree $2p_0+1$. Applying 
Young's inequality~\eqref{ineq_young}, we obtain by induction that if 
$\psi(t,\cdot)\in H^{\frac12-\kappa}$ for a $\kappa>0$, then for any $k\geqs2$, 
$\psi(t,\cdot)^k\in H^{r}$ for any $r < \frac12-k\kappa$.  It follows that 
$\beta(t)\in H^r$ for all $r<\frac{1}{2}-(2p_0+1)\kappa$. Since $\kappa>0$ is 
arbitrary, we conclude that indeed $\beta(t)\in H^r$ for all $r < \frac12$. The 
bound~\eqref{eq:bound_norm_beta} is then a consequence 
of Young's inequality~\eqref{ineq_young}, the bound~\eqref{eq:bound_btpsi} on 
$b(t,\psi)$ for small $\psi$, and the fact that $\beta(t)$ is a polynomial of 
degree $2p_0+1$. 

Consider now the general case. By Assumption~\ref{assum:U_large_phi}, 
$f(t,\psi)$ and $\beta(t)$ are each the sum of a polynomial of degree $2p_0+1$ 
and a bounded function $g(t,\psi)$. For $0<s<2$ and $1\leqs p, q\leqs \infty$, 
consider the Besov space $\mathcal{B}^{s,q}_p$. Then, ~\cite{bourdaud} 
shows that there exists a constant $R(p,q,s,M)>0$ such that for 
all $\psi$ in the positive cone, $(\cB^{s,q}_p)^+$, we have
\begin{equation}
\label{ineq_bessov}
\norm{g\circ \psi}_{\mathcal{B}^{s,q}_p}\leqs R(p,q,s,M) 
\norm{\psi}_{\mathcal{B}^{s,q}_p}\;.
\end{equation} 
In particular, whenever $p=q=2$ the Besov space $\mathcal{B}^{s,2}_2$ is 
nothing but the Sobolev space $H^s$. Thus, if $\psi(t,\cdot)\in H^s$, then 
$g\circ\psi\in H^s$ and $\beta(t)\in H^r$ for all 
$r<\frac{1}{2}$.
\end{proof}

\begin{lemma}[Schauder-type estimate]
\label{lemma3.5}
Assume  $\beta\in H^r$ for some $r \in (0,\frac12)$. Then for all $q<r+2$, 
there exists a constant $M(q,r)<\infty$ such that 
\begin{equation}
\label{eq:Schauder1} 
\norm{\e^{t\Delta}\beta}_{H^q}\leqs 
M(q,r) t^{-\frac{q-r}{2}}\norm{\beta}_{H^r} 
\end{equation} 
for all $t>0$. 
\end{lemma}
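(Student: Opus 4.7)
The plan is to diagonalise the heat semigroup in the Fourier basis $\{e_k\}_{k\in\Z}$, on which $-\Delta$ acts with eigenvalues $\mu_k = k^2\pi^2/L^2$ (cf.~\eqref{eq:ev_Laplacian}). Writing $\beta = \sum_{k\in\Z}\beta_k e_k$, the semigroup becomes $\e^{t\Delta}\beta = \sum_{k\in\Z} \e^{-\mu_k t}\beta_k e_k$, so that
\begin{equation}
\norm{\e^{t\Delta}\beta}_{H^q}^2
= \sum_{k\in\Z}\jbrack{k}^{2q}\e^{-2\mu_k t}\beta_k^2
\leqs \biggpar{\sup_{k\in\Z} \jbrack{k}^{2(q-r)}\e^{-2\mu_k t}} \norm{\beta}_{H^r}^2 .
\end{equation}
This reduces the claim to controlling the scalar supremum by $M(q,r)^2 t^{-(q-r)}$.

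First I would treat the zero mode separately: for $k=0$ one has $\mu_0=0$ and $\jbrack{0}=1$, so that factor contributes exactly $1$, which is harmless provided $t$ stays bounded; for large $t$ the bound is trivial since the heat semigroup is a contraction on $H^r$ and $t^{-(q-r)/2}$ stays bounded away from $0$ on any interval $[T_1,\infty)$. For $|k|\geqs 1$, I would use the elementary estimate $\mu_k \geqs c_1 \jbrack{k}^2$ with $c_1 = \pi^2/(2L^2)$, which follows from $\jbrack{k}^2 = 1+k^2 \leqs 2k^2$ for $|k|\geqs 1$.

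Second, with $y = c_1 t\jbrack{k}^2$ one rewrites
\begin{equation}
\jbrack{k}^{2(q-r)}\e^{-2\mu_k t}
\leqs \jbrack{k}^{2(q-r)}\e^{-2 c_1 t \jbrack{k}^2}
= (c_1 t)^{-(q-r)}\, y^{q-r}\e^{-2y}
\leqs (c_1 t)^{-(q-r)} \sup_{y>0}\bigbrak{y^{q-r}\e^{-2y}}.
\end{equation}
For any $q>r$ the last supremum equals $\bigbrak{(q-r)/(2\e)}^{q-r}$, which is finite (and this is where the assumption $q>r$ is used; the stated range $q<r+2$ is more generous than needed for this approach). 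Combining both mode contributions yields the desired Schauder bound with
\begin{equation}
M(q,r)^2 = 1 \vee c_1^{-(q-r)}\bigbrak{(q-r)/(2\e)}^{q-r},
\end{equation}
up to a multiplicative constant to merge the small-$t$ and large-$t$ regimes.

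There is essentially no obstacle here: the estimate is a direct spectral computation, and the only minor point of care is the separate treatment of the zero mode (since $\mu_0 = 0$ prevents the exponential factor from producing any smoothing), which is handled by noting that $\jbrack{0}^{2(q-r)}=1$ contributes a harmless constant. The constant $M(q,r)$ blows up as $q-r\to\infty$, but remains finite on the range $q<r+2$ in the statement.
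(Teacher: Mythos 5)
Your proof is correct and is essentially the paper's own argument: both diagonalise the heat semigroup in the Fourier basis and bound the multiplier $\jbrack{k}^{2(q-r)}\e^{-2\mu_k t}$ by maximising $z^{q-r}\e^{-cz}$ over $z = t\jbrack{k}^2$. The only inaccurate point is your large-$t$ aside (for $q>r$ one has $t^{-(q-r)/2}\to 0$, so it is not bounded away from zero on $[T_1,\infty)$), but this reflects a defect of the statement itself --- for $q>r$ and $\beta_0\neq 0$ the zero mode forces a restriction to bounded times --- and the paper's proof silently makes the same restriction by concluding only for $t\in I$.
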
 
\begin{proof}
Let $\gamma=\frac{q-r}{2}$. Writing the Fourier expansion of $\beta$ as 
$\beta(x) = \sum_{k\in\Z} \beta_k e_k(x)$,
we have 
\begin{equation}
 \e^{t\Delta}\beta(x) = \sum_{k\in\Z} \e^{-\mu_k t}\beta_k e_k(x)\;,
\end{equation} 
where the $-\mu_k$ are the eigenvalues of the Laplacian, 
cf.~\eqref{eq:ev_Laplacian}. 
By definition of the fractional Sobolev norm, we obtain 
\begin{align}
\norm{t^\gamma e^{t\Delta}\beta}^2_{H^q}
&= \sum_k\jbrack{k}^{2q}t^{2\frac{q-r}{2}}\e^{-2\mu_kt}\beta_k^2\\
&\leqs \sum_k 
\big[\jbrack{k}^2t\big]^{q-r}\e^{-c_0^-\jbrack{k}^2t}\jbrack{k}^{2r}
\beta_k^2\;\\
&=\sum_k H\bigpar{\jbrack{k}^2t}\jbrack{k}^{2r}\beta_k^2\;,
\end{align}
where $H(z)=z^{q-r}\e^{-c_0^-z}$ reaches its maximum at 
$z^*=\frac{q-r}{c_0^-}$. Therefore, 
\begin{equation}
0\leqs H(z)
\leqs M(q,r)^2 
= H(z^*) 
= \biggpar{\frac{q-r}{c_0^-}}^{q-r}\e^{-(q-r)} 
\end{equation}
for all $z\geqs0$. We conclude that for all $t\in I$, 
\begin{equation}
\bignorm{t^\frac{q-r}{2} \e^{t \Delta}\beta}^2_{H^q}
 \leqs\sum_k M(q,r)^2\jbrack{k}^{2r}\beta^2_k
 =M(q,r)^2\norm{\beta}^2_{H^r}
\end{equation}
as claimed. 
\end{proof} 

Applying this result to the term $\psi^1(t,\cdot)$ defined 
in~\eqref{eq:psi0psi1}, we obtain the following key estimate. 

\begin{cor}
Assume there exists $r\in(0,\frac12)$ such that $\beta(t) \in H^r$ for all 
$t\in I$. Then for all $q<r+2$, there exists a constant $M'(q,r)<\infty$ such 
that for all $t\in I$, one has $\psi^1(t,\cdot)\in H^q$ and 
\begin{equation}
\norm{\psi^1(t,\cdot)}_{H^q}
\leqs M'(q,r)\eps^{\frac{q-r}{2}-1} 
\sup_{0\leqs t_1\leqs t}\norm{\beta(t_1)}_{H^r}\;.
\end{equation}
\end{cor}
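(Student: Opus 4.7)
The plan is to start from the Duhamel representation~\eqref{eq:psi0psi1} of $\psi^1$ and reduce the claim to a pointwise-in-time application of Lemma~\ref{lemma3.5}. First, I would use Minkowski's integral inequality (for Hilbert-space valued integrals) to pass the $H^q$-norm inside the time integral, yielding
\begin{equation*}
 \norm{\psi^1(t,\cdot)}_{H^q}
 \leqs \frac{1}{\eps} \int_0^t \e^{\bar\alpha(t,t_1)/\eps}
 \bignorm{\e^{[(t-t_1)/\eps]\Delta} \beta(t_1)}_{H^q} \6t_1\;,
\end{equation*}
where the positive scalar factor $\e^{\bar\alpha(t,t_1)/\eps}$ is pulled out. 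By Assumption~\ref{assum:a(t)} one has $\bar a \leqs 0$, hence $\e^{\bar\alpha(t,t_1)/\eps} \leqs 1$, and applying Lemma~\ref{lemma3.5} with heat-kernel time $(t-t_1)/\eps$ gives
\begin{equation*}
 \bignorm{\e^{[(t-t_1)/\eps]\Delta} \beta(t_1)}_{H^q}
 \leqs M(q,r) \biggpar{\frac{t-t_1}{\eps}}^{-(q-r)/2} \norm{\beta(t_1)}_{H^r}\;,
\end{equation*}
which is the smoothing input I need, valid precisely because $q<r+2$.

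I would then factor $\sup_{0\leqs t_1\leqs t}\norm{\beta(t_1)}_{H^r}$ out of the integral and evaluate what remains by the change of variable $u=(t-t_1)/\eps$:
\begin{equation*}
 \int_0^t \biggpar{\frac{t-t_1}{\eps}}^{-(q-r)/2} \6t_1
 = \eps \int_0^{t/\eps} u^{-(q-r)/2} \6u
 = \frac{t^{\,1-(q-r)/2}}{1-(q-r)/2}\,\eps^{(q-r)/2}\;.
\end{equation*}
Combining this with the overall $1/\eps$ prefactor and bounding $t\leqs T$ produces the claim with $M'(q,r) = M(q,r)\,T^{1-(q-r)/2}/(1-(q-r)/2)$, and in particular shows $\psi^1(t,\cdot)\in H^q$.

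The main obstacle is the singularity $u^{-(q-r)/2}$ near $u=0$, which reflects the breakdown of the smoothing estimate of Lemma~\ref{lemma3.5} as the heat-kernel time tends to zero. The hypothesis $q<r+2$ is exactly what makes $(q-r)/2<1$, so that the singularity is integrable at the origin; this condition simultaneously determines the exponent $\eps^{(q-r)/2-1}$ in the final bound. I note in passing that retaining the decay provided by $\e^{\bar\alpha(t,t_1)/\eps}$ rather than discarding it would remove the negative $\eps$-power, but the weaker estimate recorded here is all that is needed for the nonlinear analysis in the next subsection.
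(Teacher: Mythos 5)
Your proposal is correct and follows essentially the same route as the paper: take the $H^q$-norm inside the Duhamel integral, bound the scalar exponential $\e^{\bar\alpha(t,t_1)/\eps}$ by a constant, apply the Schauder estimate of Lemma~\ref{lemma3.5} with heat-kernel time $(t-t_1)/\eps$, and use $q-r<2$ to make the resulting singularity $(t-t_1)^{-(q-r)/2}$ integrable, which produces exactly the factor $\eps^{(q-r)/2-1}$. Your closing remark that retaining the exponential decay would remove the negative power of $\eps$ is also accurate; the paper likewise discards that decay at the final step.
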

\begin{proof}
Note that $\bar\alpha(t,t_1)\leqs -\frac{c_0^-}{2}(t-t_1)$ whenever $t_1\leqs 
t$. Furthermore, the previous result implies that for any $q < r+2$, one has 
\begin{equation}
\bignorm{\e^{(t/\eps)\Delta} 
\beta(t)}_{H^q}
\leqs M(q,r) \biggpar{\frac{\eps}{t}}^{\frac{q-r}{2}}
\norm{\beta(t)}_{H^r}\;.
\end{equation}
Therefore
\begin{align}
 \norm{\psi^1(t,x)}_{H^q}
   &\leqs \frac{1}{\eps}\int_0^t \e^{-c_0^-(t-t_1)/(2\eps)}
\bignorm{\e^{[(t-t_1)/\eps]\Delta}\beta(t_1)}_{H^q}\6t_1\;\\
&\leqs M(q,r) \eps^{\frac{q-r}{2}-1} 
\sup_{0\leqs t_1\leqs t}   \norm{\beta(t_1)}_{H^r}
\int_0^t (t-t_1)^{-\frac{q-r}{2}}\6t_1\;,
\end{align}
and the integral over $t_1$ is bounded whenever $q-r < 2$. 
\end{proof}

Now, if $s\leqs q$ then $ H^q\subset H^s$ and thus $\psi^1(t,\cdot)\in H^s$ 
whenever $\psi(t,\cdot)\in H^s$. With these results, we can now prove Theorem 
\ref{thm:stable} for the nonlinear case.

\begin{proof}[Proof of Theorem \ref{thm:stable}]
For every decomposition $h=h_0+h_1$ with $h_0, h_1 > 0$, one has 
\begin{align}
\bigprob{\tau_{\mathcal{B}(h)}<t}
={}&\biggprob{\sup_{0\leqs t\leqs 
T\wedge\tau_{\mathcal{B}(h)}}\norm{\psi(t,\cdot)}_{H^s}>h}\;\\ 
\leqs{}& \biggprob{\sup_{0\leqs t\leqs 
T\wedge\tau_{\mathcal{B}(h)}}\norm{\psi^1(t,\cdot)}_{H^s}
+ \norm{\psi^0(t,\cdot)}_{ H^s}>h }\;\\
\leqs{}& \biggprob{\sup_{0\leqs t\leqs T}\norm{\psi^0(t,\cdot)}_{H^s}>h_0}\\
&{} +\biggprob{\sup_{0\leqs t\leqs 
T\wedge\tau_{\mathcal{B}(h)}}\norm{\psi^1(t,\cdot)}_{H^s}>h_1, \sup_{0\leqs 
t\leqs T}\norm{\psi^0(t,\cdot)}_{H^s}\leqs h_0}\;.
\end{align}
The first term on the right-hand side can be estimated by~\eqref{th2.4_lin}. 
Furthermore, for all $t<\tau_{\mathcal{B}(h)}$, we have 
$\norm{\beta(t)}_{H^r}\leqs M\norm{\psi(t,\cdot)}_{H^s}^2\leqs Mh^2$, so that
\begin{equation}
  \norm{\psi^1(t,x)}_{H^q}
  \leqs  M'(q,r)\eps^{\frac{q-r}{2}-1}Mh^2\;.
\end{equation} 
Choosing $h_1=M'(q,r)\eps^{\frac{q-r}{2}-1}Mh^2$, we get 
\begin{equation}
\biggprob{\sup_{0\leqs t\leqs T\wedge\tau_{\mathcal{B}(h)}}
\norm{\psi^1(t,x)}_{H^s}>h_1, \sup_{0\leqs t\leqs 
T}\norm{\psi^0(t,x)}_{H^s}\leqs h_0}=0\;.
\end{equation}
We thus obtain the result by choosing
$h_0=h-h_1=h-M'(q,r)\eps^{\frac{q-r}{2}-1}Mh^2
=h(1-\mathcal{O}(h/\eps^{\nu}))$ and $\nu=1-\frac{q-r}{2}$.
 \end{proof}


\section{Proofs: bifurcations}
\label{sec:proof_bif} 

Before entering the detailed analysis, we make a preliminary change of 
variables yielding the form~\eqref{eq:phi0_phiperp_stoch} of the equations. 
Let $\alpha$, $\beta$, $\gamma \in\R$. Using the scaling $t=\alpha\bar t$, 
$x=\beta\bar x$ and $\phi=\gamma\bar\phi$ in~\eqref{eq:SPDE_bif}, we obtain the 
following SPDE. For all $\bar x\in\brak{0,\bar L=\frac{L}{\beta}}$, one has 
\begin{equation}
  \6 \bar \phi(\bar t,\bar x)
  =\frac{1}{\bar\eps}\Bigbrak{\Delta\bar \phi(\bar t,\bar x)+\bar\alpha g(\bar 
t)-\bar \beta \bar \phi(\bar t,\bar x)^2-\bar \gamma b(\bar t,\bar \phi(\bar 
t,\bar x))}\6 \bar t + \frac{\bar\sigma}{\sqrt{\bar\eps}} \6W(\bar t,\bar x)\;,
\end{equation}
where $\bar\eps=\frac{\beta^2}{\alpha}\eps$, 
$\bar\alpha=\frac{\alpha^2\beta^2}{\gamma}$, $\bar \beta=\gamma\beta^2$, $\bar 
\gamma=\gamma^2\beta^2$ and $\bar\sigma=\frac{\sqrt{\beta}}{\alpha\gamma}\sigma$ 
(below, we drop the bars in order not to overload the notation). We now apply 
the decomposition~\eqref{eq:phi0_phiperp} of the solution in its mean and 
oscillating part. Taylor's formula yields 
\begin{equation}
 b(t, \phi_0 e_0 + \phi_\perp) 
 = b(t,\phi_0e_0) + \partial_\phi b(t,\phi_0e_0) \phi_\perp
 + \frac12 \partial_\phi^2 b(t,\phi_0e_0) \phi_\perp^2 
 + R(t,\phi_0e_0,\phi_\perp)\;,
\end{equation} 
where
\begin{equation}
 R(t,\phi_0e_0,\phi_\perp)
 = \frac16 \partial_\phi^3 b(t,\phi_0e_0 + \theta\phi_\perp) \phi_\perp^3
\end{equation} 
for some $\theta\in\brak{0,1}$. 
Therefore, the spatially constant part $\phi_0(t)$ of the solution $\phi(t,x)$ 
satisfies the equation  
\begin{align}
\6\phi_0(t)
={}& \frac{1}{\eps}\pscal{e_0}{\Delta \phi(t,\cdot)+\alpha g(t)-\beta 
\phi(t,\cdot)^2-\gamma b(t,\phi(t,\cdot))} \6t 
+ \frac{\sigma}{\sqrt{\eps}} \pscal{e_0}{\6W( t, \cdot)}\;\\
={}& \frac{1}{\eps} \biggl[
\alpha\sqrt{L} \, g(t)-\frac{\beta}{\sqrt{L}}\phi_0(t)^2-\frac{\beta}{\sqrt{L} 
} \norm{\phi_\perp}_{L^2}^2 - \gamma\sqrt{L}\, b(t,\phi_0(t)e_0) \\
&{}\quad -\frac{\gamma}{2\sqrt{L}} 
\partial_\phi^2b(t,\phi_0(t)e_0)\norm{\phi_\perp(t,\cdot)}_{L^2}^2
-\gamma \bigpscal{e_0}{R(t,\phi_0(t)e_0,\phi_\perp(t,\cdot))}
 \biggr] \6t \\
&{}+ \frac{\sigma}{\sqrt{\eps}} \6W_0(t)\;.
\end{align}
On the other hand, the mean zero part $\phi_\perp(t,x) = \phi(t,x) - 
\phi_0(t)e_0(x)$ satisfies 
\begin{align} 
 \6 \phi_\perp(t,x)
={}& \6\phi(t,x)-\6 \phi_0(t)e_0(x)\;\\
={}& \frac{1}{\eps} \biggl[\Delta 
\phi_\perp(t,x)-\biggpar{2\frac{\beta}{\sqrt{L}}
\phi_0(t)+\gamma\partial_\phi 
b(t,\phi_0(t)e_0(x))}\phi_\perp(t,x) \\
&\quad{}- \biggpar{\beta+\frac{\gamma}{2}
\partial^2_\phi b(t,\phi_0(t)e_0(x))}
\biggpar{\phi_\perp(t,x)^2 - \frac1L \norm{\phi_\perp(t,\cdot)}_{L^2}^2} \\
&\quad{}-\gamma\biggbrak{R(t,\phi_0(t)e_0(x),\phi_\perp(t,x)) - 
\frac{1}{\sqrt{L}}\bigpscal{e_0}{R(t,\phi_0(t)e_0,\phi_\perp(t,\cdot))}}\6t  \\
&{}+ \frac{\sigma}{\sqrt{\eps}} \6W_\perp( t, x)\;.
\end{align}
Choosing $\alpha=\frac{1}{\sqrt{L}}$, $\beta=\sqrt{L}$ and 
$\gamma=\frac{1}{\sqrt{L}}$ yields the coupled SDE-SPDE 
system~\eqref{eq:phi0_phiperp_stoch} with 
\begin{align}
b_0(t,\phi_0,\phi_\perp)
={}& - \Bigpar{1 + \frac{1}{2L} \partial_\phi^2 b(t,\phi_0e_0)} 
\norm{\phi_\perp}_{L^2}^2 
- \frac{1}{\sqrt{L}} \pscal{e_0}{R(t,\phi_0e_0,\phi_\perp)} \;, \\
 a(t,\phi_0) 
={}& -2 \phi_0 - \frac{1}{\sqrt{L}}\partial_\phi b(t,\phi_0e_0)\;, \\
b_\perp(t,\phi_0,\phi_\perp) 
={}& -\sqrt{L}\Bigpar{1+\frac{1}{2L}\partial^2_\phi 
b(t,\phi_0e_0)}\Bigpar{\phi_\perp(\cdot)^2 - \frac1L 
\norm{\phi_\perp}_{L^2}^2}\;\\
&{}-\frac{1}{\sqrt{L}} R(t,\phi_0e_0,\phi_\perp) 
+\frac{1}{L} \pscal{e_0}{R(t,\phi_0e_0,\phi_\perp)}\;.
\label{eq:b0_a_bperp} 
\end{align}
Note that $b_0$ and $b_\perp$ are no longer local non-linearities, since they 
involve integrals over the whole torus. This remains, however, a relatively 
harmless non-locality, that will not cause any problems. 

We now derive a number of bounds on the remainder terms $b_0$ and $b_\perp$. By 
similar arguments as in the proof of Proposition~\ref{det_prop}, there exist 
constants $d, \bar d > 0$ such that whenever $\abs{\phi_0} < \bar d$ and 
$\norm{\phi_\perp}_{H^1} < d$, one has 
\begin{equation}
 \bigabs{R(t,\phi_0e_0,\phi_\perp(x))} 
 \leqs M \abs{\phi_\perp(x)}^3 
 \leqs M \CSob \norm{\phi_\perp}_{H^1}^3
\end{equation} 
for some finite constant $M$. Therefore, under these conditions on $\phi_0$ and 
$\phi_\perp$, we obtain 
\begin{equation}
\label{eq:bound_bperp_H1} 
 \bigabs{b_\perp(t,\phi_0,\phi_\perp(x))} 
 \leqs M_1 \norm{\phi_\perp}_{H^1}^3
\end{equation} 
for some constant $M_1$. Furthermore, the same argument as in 
Lemma~\ref{lemma3.3} shows that  for all 
$r<\frac12 - (2p_0+1)(\frac12 -s)$,  
there exists $C(r,s)<\infty$ such that 
\begin{equation}
 \norm{R(t,\phi_0e_0,\phi_\perp)}_{H^r} 
 \leqs C(r,s) 
\max\{\norm{\phi_\perp}_{H^s}^3,\norm{\phi_\perp}^{2p_0-1}_{H^s}\}\;. 
\end{equation} 
Combining this with the Cauchy--Schwarz inequality, we obtain the existence of 
a constant $M_2$ such that the bounds 
\begin{align}
\label{eq:bound_b0}
\bigabs{b_0(t,\phi_0,\phi_\perp)}
&\leqs M_2 
\max\bigset{\norm{\phi_\perp}^2_{H^s},\norm{\phi_\perp}^{2p_0-1}_{H^s}}\;, \\
\norm{b_\perp(t,\phi_0,\phi_\perp)}_{H^r}
&\leqs 
M_2C(r,s)\max\bigset{\norm{\phi_\perp}^2_{H^s},\norm{\phi_\perp}^{2p_0-1}_{H^s}}
\label{eq:bound_bperp}
\end{align} 
hold for all $\phi_0\in\R$ such that $\abs{\phi_0} < \bar d$.

\subsection{Deterministic case}
\label{ssec:bif_deterministic} 

We start by investigating the deterministic behaviour of the solution 
$(\phi_0(t), \phi_\perp(t,\cdot))$. The deterministic 
equation for $\phi_\perp(t,x)$ is given by 
\begin{equation}
 \6\phi_\perp(t,x) 
 = \frac{1}{\eps} \biggbrak{\Delta\phi_\perp(t,x) + a(t,\phi_0(t)) 
\phi_\perp(t,x) + b_\perp\bigpar{t,\phi_0(t),\phi_\perp(t,x)}}\6t\;.
\end{equation}

\begin{proof}[Proof of Proposition ~\ref{eq:prop_phi0_det}]
The proof is almost the same as the proof of~Proposition~\ref{det_prop}, so 
that we only comment on the differences. Here we define the Lyapunov function 
\begin{equation}
 \label{eq:Lyapunov}
 V(\phi_\perp) = \frac12 \norm{\phi_\perp}_{H^1}^2 
 = \frac12 \norm{\phi_\perp}_{L^2}^2 + \frac{L^2}{2\pi^2} 
\norm{\nabla\phi_\perp}_{L^2}^2\;. 
\end{equation} 
Its time derivative satisfies 
\begin{align}
 \eps\frac{\6}{\6t} V(\phi_\perp(t,\cdot)) 
 ={}& \pscal{\phi_\perp}{\Delta\phi_\perp} + a(t,\phi_0) \norm{\phi_\perp}_{L^2}^2 + 
\pscal{\phi_\perp}{b_\perp(t,\phi_0,\phi_\perp)}  \\
&{}- \frac{L^2}{\pi^2} \Bigbrak{\norm{\Delta\phi_\perp}_{L^2}^2 
+ a(t,\phi_0) \pscal{\Delta\phi_\perp}{\phi_\perp} 
+ \pscal{\Delta\phi_\perp}{b_\perp(t,\phi_0,\phi_\perp)}} \\
 \leqs{}& 2a(t,\phi_0) V(\phi_\perp) + 
\pscal{\phi_\perp}{b_\perp(t,\phi_0,\phi_\perp)} 
- \frac{L^2}{\pi^2}\pscal{\Delta\phi_\perp}{b_\perp(t,\phi_0,\phi_\perp)} \;.
\end{align} 
Using~\eqref{eq:bound_bperp_H1} and the Cauchy--Schwarz inequality, we obtain 
that for $\phi_0$ and $\norm{\phi_\perp}_{H^1}$ small enough, the term 
$\pscal{\phi_\perp}{b_\perp(t,\phi_0,\phi_\perp)}$ has order 
$\norm{\phi_\perp}_{H^1}^3$. As for the last term, it follows from the 
expression~\eqref{eq:b0_a_bperp} of $b_\perp$ that it has the form 
\begin{align}
 \pscal{\Delta\phi_\perp}{b_\perp(t,\phi_0,\phi_\perp)} 
 ={}& A(t)\pscal{\Delta\phi_\perp}{\phi_\perp^2}
 + B(t) \pscal{\Delta\phi_\perp}{1} \norm{\phi_\perp}_{L^2}^2 \\
 &{}- \frac{1}{\sqrt{L}} \pscal{\Delta\phi_\perp}{R(t,\phi_0e_0,\phi_\perp)}
 + \frac1L \pscal{\Delta\phi_\perp}{1} \pscal{e_0}{R(t,\phi_0e_0,\phi_\perp)}
\end{align} 
for some bounded functions $A$ and $B$. The first term on the right-hand side 
can be bounded using integration by parts. The third one has order 
$\norm{\phi_\perp}_{H^1}^2 \norm{\phi_\perp}_{L^\infty}^2$, and the other two 
terms vanish because $\pscal{\Delta\phi_\perp}{1} = 0$. It follows that 
$\pscal{\Delta\phi_\perp}{b_\perp(t,\phi_0,\phi_\perp)}$ has also order 
$\norm{\phi_\perp}_{H^1}^3$, provided $\phi_0$ and $\norm{\phi_\perp}_{H^1}$ 
are small enough.

Writing as before $\bar\tau$ for the first-exit time from the set 
$\set{V(\phi_\perp(t,\cdot))\leqs C_0}$, we obtain 
\begin{equation}
\eps \dot V 
\leqs -C_1V + C_2 V^{3/2} 
\leqs -C_1\biggbrak{1 - \frac{C_0^{1/2}C_2}{C_1}}V 
\label{borne_lyapunov_perp}
\end{equation}
for all $t \leqs \bar\tau$, and some constants $C_1, C_2 > 0$. Choosing 
$C_0$ such that $C_0^{1/2} \leqs \frac{C_1}{2C_2}$, we obtain 
\begin{equation}
 \eps \dot V \leqs -\frac12 C_1V\;,
\end{equation} 
which allows to show that there exists a particular solution satisfying $V(t) = 
0$ for all $t\in I$. 
As for $\phi_0(t)$, it obeys the ODE
\begin{equation}
\eps\dot\phi_0(t) = g(t) - \phi_0(t)^2 - b(t,\phi_0(t)e_0)\;,
\end{equation}
which can be analysed in exactly the same way as in~\cite{BG_SR},  
concluding the proof.
\end{proof} 


\subsection{Stochastic case}
\label{ssec:bif_stochastic} 

We consider now the coupled SDE--SPDE system~\eqref{eq:phi0_phiperp_stoch} with 
$\sigma>0$. We start by analysing the dynamics of $\phi_\perp(t,x)$ for a given 
realisation of $\phi_0(t)$.
The SPDE
\begin{equation}
\6\phi_\perp(t,x)= \frac{1}{\eps} \biggbrak{\Delta\phi_\perp(t,x) + 
a(t,\phi_0(t)) \phi_\perp(t,x) + b_\perp(t,\phi_0(t),\phi_\perp(t,x))}\6t + 
\frac{\sigma}{\sqrt{\eps}}\6W_\perp(t,x)\;
\end{equation}
admits, as in Subsection~\ref{sssec:stable_nonlinear}, a solution given by
\begin{align}
\phi_\perp(t,\cdot) 
={}& \frac{\sigma}{\sqrt{\eps}}\int_0^t 
\e^{\alpha(t,t_1)/\eps}\e^{\brak{(t-t_1)/\eps}\Delta}\6W(t_1,\cdot) \\
&{}+ 
\frac{1}{\eps}\int_0^t\e^{\alpha(t,t_1)/\eps}\e^{\brak{(t-t_1)/\eps}\Delta}
b_\perp(t_1,\phi_0(t_1),\phi_\perp(t_1,\cdot))\6t_1\;,
\end{align} 
where $\alpha(t,t_1) = \int_{t_1}^t a(u,\phi_0(u))\6u$. 

\begin{proof}[Proof of Theorem~\ref{thm:phiperp}]
The proof is virtually the same as the proof of Theorem~\ref{thm:stable}, the 
only difference being that we use here the fact that $\phi_0(t)$ is bounded by 
a constant of order $T_0$, owing to the definition of $\cB_0(h)$. Therefore, 
$a(t,\phi_0)$ is bounded above by a constant of order $T_0$. Since the largest 
eigenvalue of the Laplacian acting on mean-zero functions $\phi_\perp$ is 
equal to $-\pi^2/L^2$, taking $T_0$ small enough we obtain again a
bound of the form~\eqref{borne_lyapunov_1} for the Lyapunov function $V = 
\norm{\phi_\perp}_{H^1}^2$.  
\end{proof}

We now fix a realisation of $\phi_\perp(t)$. The difference 
$\psi_0(t) = \phi_0(t) - \bar\phi_0(t)$ satisfies the SDE 
\begin{equation}
\label{eq:psi0_stoch} 
 \6\psi_0(t)=\frac{1}{\eps}\Bigbrak{\bar a\bigpar{t,\bar\phi_0(t)}\psi_0(t) 
 + \bar b(t,\psi_0(t))}\6t+\frac{\sigma}{\sqrt{\eps}}\6W_0(t)\;,
\end{equation} 
where 
\begin{equation}
\bar a(t,\bar\phi_0)=-2\bar\phi_0
-\partial_{\phi_0}b(t,\bar\phi_0 e_0)\;,
\end{equation} 
and $\bar b(t,\psi_0(t))$ denotes a non-linear term given by
\begin{align}
 \bar b(t,\psi_0)
 ={}& -\Bigpar{1 + 
\frac{1}{2}\partial_{\phi_0}b(t,\bar\phi_0e_0+\theta\psi_0e_0)}
\psi_0^2 \\ 
&{} + b_0(t,\bar\phi_0(t) + \psi_0,\phi_\perp(t,\cdot))
- b_0(t,\bar\phi_0(t),\phi_\perp(t,\cdot))
\end{align} 
for some $\theta\in (0, 1)$. By~\eqref{eq:bound_b0}, there is a 
constant $M>0$ such that $\bar b(t,\psi_0(t))$ satisfies 
\begin{equation}
\bigabs{\bar b(t,\psi_0(t))}\leqs M\psi_0(t)^2
+ 2M_2\|\phi_\perp\|^2_{H^s}\leqs 
M\psi_0(t)^2
+ 2M_2 h_\perp^2 \qquad \forall t< \tau_{\cB_\perp(h_\perp)}\;.
\end{equation} 
A solution of \eqref{eq:psi0_stoch} is given by 
$\psi_0(t)=\psi_0^0(t)+\psi_0^1(t)$,where $\psi_0^0(t)$ is the solution of the 
linearisation  of \eqref{eq:psi0_stoch}, and 
\begin{equation}
\label{eq:psi0^1} 
\psi_0^1(t)=\frac{1}{\eps}\int^t_{-T_0}\e^{\bar\alpha(t,t_1)/\varepsilon}\bar 
b(t_1,\psi_0(t_1))\6t_1\;,
\end{equation} 
where $\bar\alpha(t,t_1) = \int_{t_1}^t \bar a(t_2,\bar\phi_0(t_2))\6t_2$.

Recall that we introduced a variance-related function $\zeta(t)$ 
satisfying~\eqref{eq:zeta}. 
According to~\cite[Proposition 3.8]{BG_SR},  
$\psi_0^0(t)$ is likely to remain in a strip of width proportional to 
$\sqrt{\zeta(t)}$. More precisely, 
\begin{equation}
\label{eq:prob_psi0^0} 
\biggprob{\sup_{-T_0\leqs t_1\leqs t} 
\frac{\abs{\psi^0_0(t_1)}}{\sqrt{\zeta(t_1)}}\geqs h}
\leqs C(t,\eps) \exp\biggset{-\frac{h^2}{2\sigma^2}(1-\Order{\eps})}\;,
\end{equation} 
where 
\begin{equation}
C(t,\eps) = \frac{\abs{\bar\alpha(t,-T_0)}}{\eps^2} + 2 \;.
\end{equation} 
We now use this estimate to prove Theorem~\ref{thm:phi0_stable}.

\begin{proof}[Proof of Theorem~\ref{thm:phi0_stable}]
For any decomposition $h=h_0+h_1$ with $h_0, h_1>0$, one has
\begin{align}
&\bigprob{\tau_{\cB_0(h)} < t\wedge\tau_{\cB_\perp(h_\perp)}}
= \bigprob{\tau_{\cB_0(h)} < t, \tau_{\cB_0(h)} < \tau_{\cB_\perp(h_\perp)}}\;\\
&\qquad\leqs \biggprob{\sup_{-T_0\leqs t_1\leqs 
t\wedge\tau_{\cB_0(h)}}\frac{\abs{\psi_0(t_1)}}{\sqrt{\zeta(t_1)}}\geqs h, 
\tau_{\cB_0(h)}<\tau_{\cB_\perp(h_\perp)}}\;\\
&\qquad\leqs \biggprob{\sup_{-T_0\leqs t_1\leqs 
t}\frac{\abs{\psi^0_0(t_1)}}{\sqrt{\zeta(t_1)}}\geqs h_0} + 
\biggprob{\sup_{-T_0\leqs t_1\leqs 
t\wedge\tau_{\cB_0(h)}}\frac{\abs{\psi^1_0(t_1)}}{\sqrt{\zeta(t_1)}}\geqs h_1, 
\tau_{\cB_0(h)}<\tau_{\cB_\perp(h_\perp)}} \;
\end{align} 
The first probability satisfies the bound \eqref{eq:prob_psi0^0}, so that it 
remains to control the second one. By \eqref{eq:psi0^1} and for all 
$t_1 \leqs t\wedge\tau_{\cB_0(h)}< \tau_{\cB_\perp(h_\perp)}$, as 
in~\cite[Proposition~3.10]{BG_SR}, we have the bound 
\begin{align}
\frac{\abs{\psi^1_0(t_1)}}{\sqrt{\zeta(t_1)}}&\leqs
\frac{(Mh^2\zeta(t_1) + M_2h_\perp^2)}{\sqrt{\zeta(t_1)}}
\frac{1}{\eps}\int^{t_1}_{-T_0}\e^{\bar\alpha(t_1,t_2)/\eps}
\6t_2\;\\
&\leqs Mh^2\hat\zeta(t)^{3/2} + M_2h_\perp^2\hat\zeta(t)^{1/2}\;.
\end{align} 
Choosing $h_\perp^2\leqs \frac{M}{M_2}h^2\hat\zeta(t)$ and $h_1=const\ 
h^2\hat{\zeta}(t)^{3/2}$, we get
 \begin{equation}
\biggprob{\sup_{-T_0\leqs t_1\leqs 
t\wedge\tau_{\cB_0(h)}}\frac{\abs{\psi^1_0(t_1)}}{\sqrt{\zeta(t_1)}}\geqs h_1, 
\tau_{\cB_0(h)}<\tau_{\cB_\perp(h_\perp)}}=0\;.
\end{equation}
Therefore,
\begin{equation}
\bigprob{\tau_{\cB_0(h)} < t\wedge\tau_{\cB_\perp(h_\perp)}}\leqs C(t,\eps) 
\exp\biggset{-\frac{h_0^2}{2\sigma^2}(1-\Order{\eps})}\;.
\end{equation} 
We thus obtain the result by choosing 
$h_0 = h-h_1
= h - \Order{h^2\hat{\zeta}(t)^{3/2}}
= h(1-\Order{h\hat{\zeta}(t)^{3/2}})$.
\end{proof} 

In weak noise regime, the probability of leaving either $\cB_0(h)$ or 
$\cB_\perp(h_\perp)$ before time $t$ is given by 
\begin{align}
&\bigprob{\tau_{\cB_0(h)}\wedge\tau_{\cB_\perp(h_\perp)}<t}\;\\
&\;\;= 
\bigprob{\tau_{\cB_0(h)}\wedge\tau_{\cB_\perp(h_\perp)}<t,\tau_{\cB_0(h)}<\tau_{
\cB_\perp(h_\perp)}} 
+\bigprob{\tau_{\cB_0(h)}\wedge\tau_{\cB_\perp(h_\perp)}<t,\tau_{
\cB_\perp(h_\perp)}\leqs\tau_{\cB_0(h)}}\;\\
&\;\;= \bigprob{\tau_{\cB_0(h)}<t,\tau_{\cB_0(h)}<\tau_{\cB_\perp(h_\perp)}} 
+\bigprob{\tau_{\cB_\perp(h_\perp)}<t,\tau_{\cB_\perp(h_\perp)}\leqs\tau_{
\cB_0(h)}}\\
&\;\;= \bigprob{\tau_{\cB_0(h)}<t \wedge \tau_{\cB_\perp(h_\perp)}} 
+\bigprob{\tau_{\cB_\perp(h_\perp)} < t \wedge \tau_{\cB_0(h)}}\;.
\end{align} 
The first probability on the right-hand side is bounded by 
Theorem~\ref{thm:phi0_stable} and the second one by Theorem~\ref{thm:phiperp}. 
Thus, we conclude that the behaviour of $\phi_0(t)$ in this regime does not 
differ much from the behaviour of the deterministic solution $\bar \phi_0(t)$ 
during the whole time interval $[-T_0,T_0]$. 

However, in the strong-noise regime, the situation is different. We assume from 
now on that $\sigma\geqs(\eps\vee\delta)^{3/4}$, where 
Theorem~\ref{thm:phi0_stable} shows that sample paths are concentrated near the 
adiabatic solution tracking the stable potential well at $\phi^*_+$ up to times 
of order $-\sigma^{2/3}$. As time increases, it quickly becomes very unlikely 
not to reach and overcome the unstable solution $\smash{\hat\phi_0}(t)$ tracking 
$\phi^*_-$. We notice that the linearisation of $f$ at $\smash{\hat\phi_0}$ 
satisfies  
\begin{equation}
\label{eq:ahat_zeta} 
\hat a(t,\hat\phi_0(t))  \asymp  \bigpar{\abs{t} \vee \sqrt{\delta \vee 
\eps}\,}  
\asymp \bigabs{\bar a(t,\bar\phi_0(t))}  \asymp \frac{1}{\zeta(t)}\;.
\end{equation} 
In what follows, we prove Theorem~\ref{thm:phi0_strong}, where the two terms on 
the right-hand side of \eqref{eq:prob_phi0_strong} bound, respectively, the 
probability that $\phi_0$ does not reach $-d$ before time $t$, while staying 
below $\bar\phi_0 + h\sqrt{\zeta}$, and the probability that $\phi_0$ 
crosses the level $\bar\phi_0 + h\sqrt{\zeta}$ before time t. 

\begin{proof}[Proof of Theorem~\ref{thm:phi0_strong}] 
Let $h$ be such that $\bar\phi_0(t) + h\sqrt{\zeta(t)}\leqs d$ for all 
$t\in[-c_1\sigma^{2/3},c_1\sigma^{2/3}]$. We introduce the stopping times
\begin{align}
\tau_+&=\inf\biggset{t_1\in\brak{-c_1\sigma^{2/3},T_0} \colon
\frac{\phi_0(t_1)-\bar\phi_0(t_1)}
{\sqrt{\zeta(t_1)}}>h}\;,\\
\tau_-&=\inf\bigset{t_1\in\brak{-c_1\sigma^{2/3},T_0}\colon \phi_0(t_1)<-d}.
\end{align} 
Then, the probability that $\phi_0$ does not reach $-d$ while $\phi_\perp$ 
remains in $\cB_\perp(h_\perp)$ is given by
\begin{align}
\prob{\tau_- &> t \wedge \tau_{\cB_\perp(h_\perp)}}\;\\
&=\prob{\tau_- > t \wedge \tau_{\cB_\perp(h_\perp)}, \tau_+ \leqs t \wedge 
\tau_{\cB_\perp(h_\perp)}} + 
\prob{\tau_- > t \wedge \tau_{\cB_\perp(h_\perp)}, \tau_+ > t \wedge 
\tau_{\cB_\perp(h_\perp)}}\;\\
&\leqs\prob{\tau_+ \leqs t \wedge 
\tau_{\cB_\perp(h_\perp)}} + \prob{\tau_- \wedge \tau_+ > t 
\wedge \tau_{\cB_\perp(h_\perp)}}\;
\label{eq:decomp_tauplusminus} 
\end{align} 
We estimate these two terms separately and the crucial term is the second one. 
Since we are going to use the Markov property and restart the process at certain 
times, we will use the notation $\fP^{t_0,\phi_0}$ for the law of the process 
started at time $t_0$ in $\phi_0$ whenever necessary. 

\begin{prop}
Under the assumptions of Theorem~\ref{thm:phi0_strong}, there exist constants 
$\kappa_1, M_3 > 0$ such that whenever 
$(-c_1\sigma^{2/3},\phi_{0,0})\in\cB_0(h/2)$, 
one has 
\begin{equation}
\probin{-c_1\sigma^{2/3},\phi_{0,0}}{\tau_+\leqs t\wedge 
\tau_{\cB_\perp(h_\perp)}}
\leqs C(t,\eps) 
\exp\biggset{-\frac{\kappa_1}{2\sigma^2} 
\biggpar{h - M_3h_\perp^2\sqrt{\hat\zeta(t)}\,}^2
}\;,
\end{equation}
for all $t\in[-c_1\sigma^{2/3}, T_0]$, where $C(t,\eps)= 
\frac{\abs{\bar\alpha(t,-c_1\sigma^{2/3})}}{\eps^2} + 2$.
\end{prop}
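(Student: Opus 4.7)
The plan is to mirror the proof of Theorem~\ref{thm:phi0_stable}, now on the interval $[-c_1\sigma^{2/3},t]$ and starting from a generic point in $\cB_0(h/2)$ instead of on the deterministic branch. Setting $\psi_0(t)=\phi_0(t)-\bar\phi_0(t)$, the SDE~\eqref{eq:psi0_stoch} still holds, and I split $\psi_0=\psi_0^0+\psi_0^1$: the process $\psi_0^0$ solves the linearised SDE with drift $\bar a(t,\bar\phi_0(t))\psi_0^0$ and additive noise $\sigma\eps^{-1/2}\6W_0$, started at $\psi_0^0(-c_1\sigma^{2/3})=\phi_{0,0}-\bar\phi_0(-c_1\sigma^{2/3})$, while $\psi_0^1$ is the Duhamel correction in~\eqref{eq:psi0^1} with lower limit $-c_1\sigma^{2/3}$.

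For any splitting $h=h_0+h_1$ with $h_0,h_1>0$, a union bound exactly as in the proof of Theorem~\ref{thm:phi0_stable} gives
\begin{align*}
\probin{-c_1\sigma^{2/3},\phi_{0,0}}{\tau_+\leqs t\wedge\tau_{\cB_\perp(h_\perp)}}
\leqs{}& \biggprob{\sup_{-c_1\sigma^{2/3}\leqs t_1\leqs t}\frac{\abs{\psi_0^0(t_1)}}{\sqrt{\zeta(t_1)}}\geqs h_0}\\
&{}+\biggprob{\sup_{-c_1\sigma^{2/3}\leqs t_1\leqs t\wedge\tau_+}\frac{\abs{\psi_0^1(t_1)}}{\sqrt{\zeta(t_1)}}\geqs h_1,\,\tau_+\leqs\tau_{\cB_\perp(h_\perp)}}.
\end{align*}
The first probability is controlled by adapting~\cite[Proposition~3.8]{BG_SR} to non-zero initial data. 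The deterministic part of $\psi_0^0$, namely $\e^{\bar\alpha(t,-c_1\sigma^{2/3})/\eps}\psi_0^0(-c_1\sigma^{2/3})$, has normalised size bounded uniformly in $t\in[-c_1\sigma^{2/3},T_0]$ by $\theta h$ for some fixed $\theta\in(0,1)$: on $[-c_1\sigma^{2/3},c_1\sigma^{2/3}]$ the ratio $\zeta(-c_1\sigma^{2/3})/\zeta(t)$ is bounded by~\eqref{eq:zeta}, and for $t>c_1\sigma^{2/3}$ its linear growth is dominated by the quadratic decay of $\e^{\bar\alpha(t,-c_1\sigma^{2/3})/\eps}$. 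Applying the BG\_SR estimate to the purely stochastic part of $\psi_0^0$ then yields a bound of the form $C(t,\eps)\exp\set{-\frac{1-\Order{\eps}}{2\sigma^2}(h_0-\theta h)^2}$.

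For the second term, on $\set{t_1<\tau_+\wedge\tau_{\cB_\perp(h_\perp)}}$ one has $\psi_0(t_1)^2\leqs h^2\zeta(t_1)$ and $\norm{\phi_\perp(t_1,\cdot)}_{H^s}^2\leqs h_\perp^2$, so~\eqref{eq:bound_b0} gives $\abs{\bar b(t_1,\psi_0(t_1))}\leqs Mh^2\zeta(t_1)+2M_2 h_\perp^2$. Plugging this into~\eqref{eq:psi0^1} and using~\eqref{eq:zeta} together with the standard estimate $\eps^{-1}\int_{-c_1\sigma^{2/3}}^{t_1}\e^{\bar\alpha(t_1,t_2)/\eps}\6t_2\asymp\zeta(t_1)$, exactly as in the proof of Theorem~\ref{thm:phi0_stable}, one obtains
\[
\frac{\abs{\psi_0^1(t_1)}}{\sqrt{\zeta(t_1)}}\leqs M_4 h^2\hat\zeta(t)^{3/2}+M_3 h_\perp^2\sqrt{\hat\zeta(t)}.
\]
Choosing $h_1$ equal to this deterministic bound annihilates the second probability and leaves $h_0-\theta h=(1-\theta)h\brak{1-\Order{h\hat\zeta(t)^{3/2}}}-M_3 h_\perp^2\sqrt{\hat\zeta(t)}$. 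Under the hypothesis $h\lesssim\hat\zeta(t)^{-3/2}$ of Theorem~\ref{thm:phi0_strong}, the bracket stays bounded below by a positive constant, so $(h_0-\theta h)^2\geqs\tilde c\,(h-\tilde M h_\perp^2\sqrt{\hat\zeta(t)})^2$ for some $\tilde c,\tilde M>0$; absorbing these into the final $\kappa_1$ and $M_3$ produces the exponent stated in the proposition.

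The step I expect to be most delicate is the uniform control $\abs{\psi_0^{0,\mathrm{det}}(t)}/\sqrt{\zeta(t)}\leqs\theta h$ on the entire window $[-c_1\sigma^{2/3},T_0]$, since for $t>c_1\sigma^{2/3}$ the ratio $\zeta(-c_1\sigma^{2/3})/\zeta(t)$ grows linearly and only the super-exponential decay of $\e^{\bar\alpha(t,-c_1\sigma^{2/3})/\eps}$ keeps their product bounded; all the remaining ingredients are direct transcriptions of the argument for Theorem~\ref{thm:phi0_stable}, with $-T_0$ replaced by $-c_1\sigma^{2/3}$ throughout.
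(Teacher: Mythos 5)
Your plan transcribes the proof of Theorem~\ref{thm:phi0_stable} (split $\psi_0=\psi_0^0+\psi_0^1$, union bound over $h=h_0+h_1$), but it breaks down at the nonlinear estimate. The stopping time $\tau_+$ is \emph{one-sided}: before $\tau_+\wedge\tau_{\cB_\perp(h_\perp)}$ you only know $\psi_0(t_1)\leqs h\sqrt{\zeta(t_1)}$, not $\abs{\psi_0(t_1)}\leqs h\sqrt{\zeta(t_1)}$. In the strong-noise regime the typical path does drop far below $\bar\phi_0$ (that is the whole point of Theorem~\ref{thm:phi0_strong}), so on an event of large probability $\bar b(t_1,\psi_0(t_1))$ is of order $\psi_0(t_1)^2\gg h^2\zeta(t_1)+h_\perp^2$; your claimed bound $\abs{\psi_0^1}/\sqrt{\zeta}\leqs M_4h^2\hat\zeta^{3/2}+M_3h_\perp^2\sqrt{\hat\zeta}$ is therefore false, and choosing $h_1$ equal to it does not annihilate the second probability. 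Moreover, even granting that bound, the correction $h^2\hat\zeta(t)^{3/2}$ cannot be absorbed: the hypothesis $h\lesssim\hat\zeta(t)^{-3/2}$ belongs to Theorem~\ref{thm:phi0_stable}, not to Theorem~\ref{thm:phi0_strong}, which only requires $\bar\phi_0(t)+h\sqrt{\zeta(t)}\leqs d$, i.e.\ $h$ up to order $\hat\zeta(t)^{-1/2}$. In the intended application $h\asymp\sigma^{1/3}$ and $\hat\zeta\asymp\sigma^{-2/3}$, so $h\hat\zeta^{3/2}\asymp\sigma^{-2/3}\gg1$ and your factor $1-\Order{h\hat\zeta^{3/2}}$ is not bounded below by a positive constant: the exponent collapses exactly in the regime where the proposition is needed.

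The missing idea is to use the \emph{sign} of the quadratic term. Since the coefficient of $\psi_0^2$ in $\bar b$ is $-(1+\tfrac12\partial_{\phi}b)\leqs 0$ for the relevant range of $\phi_0$, one has the one-sided bound $\bar b(t_1,\psi_0(t_1))\leqs 2M_2h_\perp^2$ with no control on $\psi_0$ from below, hence only an \emph{upper} bound $\psi_0^1(t_1)\leqs M_2h_\perp^2\sup(1/\abs{\bar a})$, i.e.\ $\psi_0^1(t_1)/\sqrt{\zeta(t_1)}\leqs M_3h_\perp^2\sqrt{\hat\zeta}$; and since $\tau_+$ only concerns upward crossings, an upper bound on $\psi_0^1$ is all that is needed. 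This is what produces the shift $h\mapsto h-M_3h_\perp^2\sqrt{\hat\zeta(t)}$ in the statement, with no $h^2$-correction. The paper also treats the initial condition in $\cB_0(h/2)$ through the Markov-restart partition of \cite[Proposition~3.12]{BG_SR} (blocks with $\abs{\bar\alpha(u_k,u_{k-1})}=\eps$ and restart levels $\rho_k=\tfrac12h\sqrt{\zeta(u_k)}$) rather than through your estimate $\abs{\psi_0^{0,\mathrm{det}}}/\sqrt{\zeta}\leqs\theta h$; that part of your argument is plausibly repairable since $\kappa_1$ is unspecified, but without the one-sided treatment of the nonlinearity your proof does not establish the proposition.
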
 

\begin{proof}
The solution of \eqref{eq:psi0_stoch} is given by   
\begin{equation}
\psi_0(t)=\psi_0^0(t)+\frac{1}{\eps}\int^t_{-c_1\sigma^{2/3}}\e^{\bar\alpha(t,
t_1)/\varepsilon}\bar b(t_1,\psi_0(t_1))\6t_1\;.
\end{equation} 
We define a partition $-c_1\sigma^{2/3}=u_0<u_1<\dots<u_K=t$ of 
$[-c_1\sigma^{2/3},t]$   
by
\begin{equation}
\bar\alpha(u_k,u_{k-1})=\eps \qquad \text{for } 1\leqs k\leqs 
K=\bigg\lceil 
\frac{\bar{\alpha}(t,-c_1\sigma^{2/3})}{\eps}\bigg\rceil\;.
\end{equation} 
We also introduce the notation $\rho_k = \frac12 h \sqrt{\zeta(u_k)}$. 
As shown in~\cite[Proposition~3.12]{BG_SR}, the Markov property implies 
\begin{align}
 \probin{-c_1\sigma^{2/3},\phi_{0,0}}{\tau_+< t\wedge \tau_{\cB_\perp(h_\perp)}}
 &= \biggprobin{-c_1\sigma^{2/3},\phi_{0,0}}{\sup_{-c_1\sigma^{2/3}\leqs 
t_1\leqs 
 t\wedge \tau_{\cB_\perp(h_\perp)}}
\frac{\psi_0(t_1)}{\sqrt{\zeta(t_1)}} >h} \\
 &\leqs \sum_{k=0}^{K-1} Q_k\;,
\end{align} 
where 
\begin{align}
 Q_k = 
 \sup_{\psi_0(u_k) \leqs \rho_k} 
 \biggl[ & \biggprobin{u_k,\psi_0(u_k)}{\sup_{u_k\leqs t_1\leqs u_{k+1}}
\frac{\psi_0(t_1)}{\sqrt{\zeta(t_1)}} >h} \\
&{}+ \biggprobin{u_k,\psi_0(u_k)}{\sup_{u_k\leqs t_1\leqs u_{k+1}}
\frac{\psi_0(t_1)}{\sqrt{\zeta(t_1)}} \leqs h,\; 
\psi_0(u_{k+1}) > \rho_{k+1})} \biggr]\;.
\end{align}
For $h$ smaller than a constant of order $1$ and $t_1 \leqs 
\tau_{\cB_\perp(h_\perp)}$, \eqref{eq:bound_b0} shows that 
$\bar b(t_1,\psi_0(t_1))$ is bounded by 
$M_2 h_\perp^2$. It follows that for any $t_1 \in [u_k,u_{k+1}]$, one has 
\begin{align}
\psi_0^1(t_1) 
&\leqs 
M_2 h_\perp^2 \int_{-c_1\sigma^{2/3}}^{t} \frac{1}{-\bar 
a(t_1,\bar\phi_0(t_1))}
\frac{-\bar a(t_1,\bar\phi_0(t_1))}{\eps} \e^{\bar\alpha(t,t_1)/\eps} \6t_1 \\
&\leqs 
M_2 h_\perp^2 \sup_{u\in[u_k,u_{k+1}]}\frac{1}{\abs{\bar a(u,\bar\phi_0(u))}}\;.
\label{eq:bound_psi0t1} 
\end{align} 
Therefore, there is a constant $M_3$ such that for any $t_1 \in[u_k,u_{k+1}]$ 
one has 
\begin{equation}
 \frac{\psi_0^1(t_1)}{\sqrt{\zeta(t_1)}}
 \leqs M_3 h_\perp^2 \sqrt{\hat{\zeta}(u_{k+1})}\;.
\end{equation} 
Proceeding as in the proof of~\cite[Proposition~3.12]{BG_SR}, but with a 
shifted value of $h$, one obtains 
\begin{equation}
 P_k \leqs \exp\biggset{-\frac{\kappa_1}{\sigma^2} 
 \biggpar{h - M_3 h_\perp^2 \sqrt{\hat{\zeta}(u_{k+1})}}^2\,}
\end{equation} 
for some $\kappa_1 > 0$, which implies the claimed result. 
\end{proof} 

The main part of the proof is contained in the following estimate, whose proof 
is very close in spirit to the proof of~\cite[Proposition~4.6]{BG_SR}, but with 
some changes due to the zero-mean part $\phi_\perp$ of the field.   

\begin{prop}
Under the assumptions of Theorem~\ref{thm:phi0_strong}, there exists a choice of 
$c_1>0$ and constants $\bar c_\perp$ and $\kappa_2>0$ such that for $0 < h_\perp 
< \bar c_\perp \sigma^{2/3}$, and all initial conditions $\phi_{0,0}$ in the 
interval $(-d, \bar\phi_0(-c_1\sigma^{2/3}) + 
h\sqrt{\zeta(-c_1\sigma^{2/3})}\,]$,  one has  
\begin{align}
&\probin{-c_1\sigma^{2/3},\phi_{0,0}}{\tau_- \wedge \tau_+ > t 
\wedge \tau_{\cB_\perp(h_\perp)}}\\
&\qquad = \Bigprobin{-c_1\sigma^{2/3},\phi_{0,0}}
{-d<\phi_0(t_1)\leqs\bar\phi_0(t_1)+h\sqrt{\zeta(t_1)} \;
\forall t_1\in[-c_1\sigma^{2/3},t\wedge\tau_{\cB_\perp(h_\perp)}]}\;\\
&\qquad \leqs \frac32 
\exp\biggset{-\kappa_2\frac{\hat\alpha(t,-c_1\sigma^{2/3})}{\log(\sigma^{-1}
)\eps } } \;.
\end{align} 
\end{prop}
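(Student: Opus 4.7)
The plan is to adapt the strategy of~\cite[Proposition~4.6]{BG_SR}, combining a time partition with the strong Markov property, while carefully accounting for the non-local coupling to $\phi_\perp$ through the term $b_0$ in~\eqref{eq:phi0_phiperp_stoch}.

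First I would construct a partition $-c_1\sigma^{2/3} = u_0 < u_1 < \cdots < u_K = t$ of the interval characterised by $\hat\alpha(u_{k+1},u_k) = C\eps\log(\sigma^{-1})$ for a constant $C>0$ to be chosen, yielding $K \asymp \hat\alpha(t,-c_1\sigma^{2/3})/(\eps\log(\sigma^{-1}))$. The strong Markov property applied at the times $u_k$ reduces the problem to bounding, uniformly over admissible starting points, the one-step survival probability
\begin{equation*}
q_k = \sup_{\phi_0(u_k)} \probin{u_k,\phi_0(u_k)}{-d < \phi_0(t_1) \leqs \bar\phi_0(t_1) + h\sqrt{\zeta(t_1)} \; \forall t_1\in[u_k, u_{k+1} \wedge \tau_{\cB_\perp(h_\perp)}]}
\end{equation*}
by a constant $q < 1$ independent of $k$. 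Multiplying over the $K$ intervals then produces $q^K \leqs \exp\{-\kappa_2 \hat\alpha(t,-c_1\sigma^{2/3})/(\eps\log\sigma^{-1})\}$, and the factor $\frac32$ in the statement absorbs lower-order terms from the initial step (starting from a general $\phi_{0,0}$ rather than exactly on $\bar\phi_0$), which is handled by one application of~\eqref{eq:prob_psi0^0} on the first sub-interval.

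The heart of the argument is the one-step estimate. For this I would compare $\phi_0$ on $[u_k,u_{k+1}]$ with the linearisation around the unstable branch $\hat\phi_0$: by~\eqref{eq:ahat_zeta}, the multiplier accumulated by this linearisation over such an interval is of order $\sigma^{-C}$, so the Gaussian part $\psi_0^0$ generates fluctuations of size $\sim \sigma \sqrt{\zeta(u_{k+1})}\,\sigma^{-C}$. Tuning $C$ so that this exceeds the width of the strip $(-d,\bar\phi_0+h\sqrt\zeta]$ yields a uniformly positive probability of escape below $\hat\phi_0$, from which the process is then swept to $-d$ by the deterministic flow — giving $q_k \leqs q < 1$ as required.

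The main obstacle will be controlling the drift perturbation $\bar b(\cdot,\psi_0)$ on the event $\{t \leqs \tau_{\cB_\perp(h_\perp)}\}$. By~\eqref{eq:bound_b0}, the non-local term $b_0$ contributes $\Order{h_\perp^2}$ to the drift, and integration against the Green's function as in~\eqref{eq:bound_psi0t1} translates this into a deterministic shift of the trajectory of order $h_\perp^2\,\zeta(u_k)$. For the linearisation-based escape argument to survive, this shift must be dominated by the characteristic noise scale $\sigma\sqrt{\zeta}$, which, using $\zeta \asymp \sigma^{-2/3}$ in this regime, is exactly the content of the hypothesis $h_\perp < \bar c_\perp\sigma^{2/3}$. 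Verifying this domination rigorously at every scale — and checking that the non-local, non-polynomial character of $b_0$ inherited from $R(t,\phi_0 e_0,\phi_\perp)$ does not introduce additional large factors — is the technically delicate step distinguishing the present infinite-dimensional analysis from the one-dimensional counterpart in~\cite{BG_SR}.
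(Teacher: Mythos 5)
Your overall skeleton — a partition with $\hat\alpha$-increments of order $\eps\log(\sigma^{-1})$, the Markov property, and a uniform one-step survival bound $q<1$ whose $K$-fold product gives the exponent — is exactly the paper's architecture, and your identification of $h_\perp\lesssim\sigma^{2/3}$ as the condition making the $b_0$-induced shift $h_\perp^2\zeta$ subdominant to the noise scale $\sigma\sqrt{\zeta}$ is also the right accounting. The gap is in the one-step estimate itself. You propose a single comparison with the linearisation around the unstable branch $\hat\phi_0$ over all of $[u_k,u_{k+1}]$, arguing that amplifying the Gaussian fluctuations to size $\sigma\sqrt{\zeta}\,\sigma^{-C}$ beyond the strip width forces escape. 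But the linearised process amplifies its \emph{mean} by the same factor $\e^{\hat\alpha(u_{k+1},u_k)/\eps}$ as its standard deviation, so the probability that it stays positive is governed by the ratio $\varphi_0(u_k)/(\sigma\sqrt{\zeta})$ of the initial displacement above $\hat\phi_0$ to the noise scale, \emph{independently} of how large you take $C$. For a starting point at the top of the strip, $\varphi_0(u_k)$ can be as large as $\bar\phi_0-\hat\phi_0+h\sqrt{\zeta}$ with $h/\sigma$ up to order $d\,\sigma^{-2/3}$, so this ratio is unbounded and your escape probability is not uniformly positive over admissible starting points. Tuning $C$ cannot repair this.

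The paper closes this gap with a three-stage decomposition of each interval $[u_k,u_{k+1}]$ (via intermediate times $\tilde u_{k,1},\tilde u_{k,2}$ and stopping times $\tau_{k,1},\tau_{k,2}$): (i) from the upper part of the strip, the \emph{attracting} linearisation around $\bar\phi_0$ plus the reflection principle shows the path crosses $\bar\phi_0+\overline M_k$ except with probability $\lesssim (h/\sigma)\e^{-\varrho/3R}$ — it is here, not in the escape step, that $\varrho\asymp\log(\sigma^{-1})$ is needed to kill the large factor $h/\sigma$; (ii) only once the path sits at $\bar\phi_0$, where its distance to $\hat\phi_0$ is $\asymp c_1\sigma^{2/3}\asymp \bar c_1\,\sigma\sqrt{\zeta}$ with $c_1$ chosen small, does the unstable linearisation give a uniformly positive probability of crossing $\hat\phi_0$; (iii) an endpoint estimate shows that after crossing $\hat\phi_0$ the path fails to reach $-d$ by time $u_{k+1}$ with probability at most $\tfrac12+\Order{\sigma^{-2/3}\e^{-\varrho/3}}$ — your phrase \lq\lq swept to $-d$ by the deterministic flow\rq\rq\ overstates this, since the path crosses $\hat\phi_0$ only marginally and retains probability close to $\tfrac12$ of drifting back up; the final bound is $Q_k\leqs\tfrac23$, not $Q_k$ small. (Minor point: the prefactor $\tfrac32$ is just $(\tfrac23)^{K-1}=\tfrac32(\tfrac23)^K$; it has nothing to do with an application of~\eqref{eq:prob_psi0^0} to the initial condition, which is the business of the companion proposition bounding $\tau_+$.)
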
 

\begin{proof}
Let $\varrho\geqs 1$ and define a 
partition $-c_1\sigma^{2/3}=u_0<u_1<\dots<u_K=t$ 
by
\begin{equation}
\hat\alpha(u_k,u_{k-1})=\varrho\eps \qquad \text{for } 1\leqs k\leqs 
K=\bigg\lceil 
\frac{\hat{\alpha}(t,-c_1\sigma^{2/3})}{\varrho\varepsilon}\bigg\rceil\;.
\label{eq:def_K} 
\end{equation} 
Writing
\begin{equation}
Q_k=\sup_{\phi_0(u_k)\in (-d, \bar\phi_0(u_k)+h\sqrt{\zeta(u_k)}]}
\Bigprobin{u_k,\phi_0(u_k)}{
-d<\phi_0 \leqs \bar\phi_0(t_1)+h\sqrt{\zeta(t_1)}\ \forall 
t_1\in 
[u_k,u_{k+1}]}\;,
\end{equation} 
we have, by the Markov property, 
\begin{align}
&\bigprobin{-c_1\sigma^{2/3},\phi_{0,0}}
{-d<\phi_0(t_1)\leqs\bar\phi_0(t_1)+h\sqrt{
\zeta(t_1) } 
\;\forall t_1\in[-c_1\sigma^{2/3},t]}\;\\
&\qquad = \Bigexpecin{-c_1\sigma^{2/3},\phi_{0,0}}{1_{\{
-d<\phi_0(t_1)\leqs\bar\phi_0(t_1)+h\sqrt{
\zeta(t_1)} \;\forall t_1\in [-c_1\sigma^{2/3},u_{K-1}]\}}\;\\
&\qquad\times\bigprobin{u_{K-1},\phi_0(u_{K-1})}{
-d<\phi_0(t_1)\leqs\bar\phi_0(t_1)+h\sqrt{\zeta(t_1)} \;\forall t_1\in 
[u_{K-1},u_K]}}\;\\
&\qquad \leqs 
Q_{K-1}\bigprobin{-c_1\sigma^{2/3},\phi_{0,0}}{
-d<\phi_0(t_1)\leqs\bar\phi_0(t_1)+h\sqrt{
\zeta(t_1) } 
\;\forall t_1\in[-c_1\sigma^{2/3},u_{K-1}]}\;\\
&\qquad \leqs \dots \leqs \prod_{k=0}^{K-1}Q_k.
\end{align}
Our plan is to show that for an appropriate choice of $\varrho$, $Q_k$ is 
bounded away from $1$ for $k=0,\dots,K-1$. In order to estimate $Q_k$ we shall 
distinguish three cases corresponding to $\phi_0$ crossing the levels 
$\bar\phi_0$ and $\hat\phi_0$ before reaching $-d$. We set 
\begin{equation}
 \overline M_k = M_2 h_\perp^2 \sup_{u\in[u_k,u_{k+1}]}\frac{1}{\abs{\bar 
a(u,\bar\phi_0(u))}}\;, 
\qquad 
 \widehat M_k = M_2 h_\perp^2 \sup_{u\in[u_k,u_{k+1}]}\frac{1}{ 
\hat a(u,\hat\phi_0(u))}\;,
\end{equation} 
and introduce  a further subdivision $u_k<\tilde u_{k,1}<\tilde 
u_{k,2}<u_{k+1}$ defined by
\begin{equation}
\hat\alpha(\tilde u_{k,1},u_k)=\frac{1}{3}\varrho\eps\;,
\qquad \hat\alpha(\tilde u_{k,2},u_k)=\frac{2}{3}\varrho\eps\;.
\end{equation} 
Define the stopping times
\begin{align}
\tau_{k,1}&=\inf\biggset{t_1\in[u_k,\tilde u_{k,1}]: 
\phi_0(t_1)\leqs\bar\phi_0(t_1) + \overline M_k}\;,\\
\tau_{k,2}&=\inf\biggset{t_1\in[u_k,\tilde 
u_{k,2}]:\phi_0(t_1)\leqs\hat\phi_0(t_1) + \widehat M_k}\;.
\end{align}
Then we can write
\begin{align}
&\probin{u_k,\phi_0(u_k)}{
-d<\phi_0(t_1) \leqs \bar\phi_0(t_1)+h\sqrt{\zeta(t_1)}\; \forall 
t_1\in [u_k,u_{k+1}]}\;\\
&\leqs 
\biggprobin{u_k,\phi_0(u_k)}{\bar\phi_0(t_1) + \overline
M_k<\phi_0(t_1)\leqs\bar\phi_0(t_1)+h\sqrt{
\zeta(t_1) } \;
\forall t_1\in [u_k,\tilde u_{k,1}]}\;\\
&\qquad{}+ \Bigexpecin{u_k,\phi_0(u_k)}{1_{\{\tau_{k,1}<\tilde 
u_{k,1}\}} \\
&\qquad\quad \times 
\probin{\tau_{k,1},\phi_0(\tau_{k,1})}{-d<\phi_0
(t_1)\leqs\bar\phi_0(t_1) +h\sqrt{
\zeta(t_1) } \; \forall t_1\in 
[\tau_{k,1},u_{k+1}]}}\;.
\label{eq:decomp_3steps_1} 
\end{align}
We start by bounding the first term on the right-hand side. 
Let  
\begin{equation}
 \psi_0^{(k)}(t_1)
 = \psi_0(u_k) \e^{\bar\alpha(t_1,u_k)/\eps} 
 + \frac{\sigma}{\sqrt{\eps}} \int_{u_k}^{t_1} \e^{\bar\alpha(t_1,v)/\eps} 
\6W_0(v)
\end{equation} 
be the solution of the equation linearised around $\bar\phi_0(t)$, starting in 
$\psi_0(u_k) = \phi_0(u_k) - \bar\phi_0(u_k)$. Then in follows 
from~\eqref{eq:bound_psi0t1} that 
\begin{equation}
 \psi_0(t) \leqs 
 \psi_0^{(k)}(t)+\overline M_k 
 \qquad 
 \forall t_1 \in [u_k,u_{k+1}]\;.
\end{equation} 
Note that $\psi_0^{(k)}(\tilde u_{k,1})$ is a normal random variable with 
parameters  
\begin{align}
\expec{\psi_0^{(k)}(\tilde u_{k,1})}
&=\psi_0(u_k)\e^{\bar\alpha(\tilde u_{k,1},u_k)/\eps}
\leqs \psi_0(u_k)\e^{-\varrho/3R}\;\\
\variance(\psi_0^{(k)}(\tilde{u}_{k,1}))
&= \frac{\sigma^2}{\eps} \int_{u_k}^{\tilde u_{k,1}} 
\e^{2\bar\alpha(t_1,v)/\eps} \6v \\
&\geqs \frac{\sigma^2}{2}
\inf_{u_k\leqs t_1\leqs u_{k+1}} \frac{1}{\abs{\bar a(u,\bar\phi_0(u))}}
[1-\e^{-2\varrho/3R}]\;,
\end{align}
where $R>0$ is a constant such that $\hat a(t_1,\hat\phi_0(t_1))\leqs 
R\abs{\bar 
a(t_1,\bar\phi_0(t_1))}$ for all $t_1\in[-c_1\sigma^{2/3},t]$. 
Then Andr\'e's reflection principle shows that the first term on the right-hand 
side of~\eqref{eq:decomp_3steps_1} is bounded above by 
\begin{align}
 \bigprobin{u_k,\phi_0(u_k)}{\psi_0^{(k)}(t_1) > 0 \; 
 \forall t_1\in[u_k,\tilde u_{k,1}] }
 &= 1 - 2 \bigprobin{u_k,\phi_0(u_k)}{\psi_0^{(k)}(t_1) \leqs 0} \\ 
 &= 2 \bigprobin{u_k,\phi_0(u_k)}{\psi_0^{(k)}(t_1) > 0} - 1 \\
 &\leqs \frac{2}{\sqrt{\pi}}\frac{h}{\sigma}
 C_1(k) \frac{\e^{-\varrho/3R}}{\sqrt{1-\e^{-2\varrho/3R} }}\;,
\label{eq:borne_phi0_bar}
\end{align}
where 
\begin{equation}
 C_1(k) = \sup_{u_k\leqs t_1\leqs u_{k+1}} 
 \sqrt{\abs{\bar a(u_k,\bar\phi_0)}}\sqrt{\zeta(u_k)}
\end{equation} 
is a constant of order $1$, owing to~\eqref{eq:ahat_zeta}. 

In order to bound the second term on the right-hand side 
of~\eqref{eq:decomp_3steps_1}, we set set 
$\varphi_0(t)=\phi_0(t)-\hat\phi_0(t)$, 
where we recall that $\hat\phi_0(t)$ is the deterministic solution tracking 
$\phi^*_-$. Observe that if $\tau_{k,1}<\tilde u_{k,1}$, we also have 
\begin{align}
\label{eq:prob_tau_k,1}
&\Bigprobin{\tau_{k,1},\varphi_0(\tau_{k,1})}{-d < \hat\phi_0(t_1) + 
\varphi_0(t_1) 
\leqs\bar\phi_0(t_1)+h\sqrt{\zeta(t_1)} \; \forall t_1\in 
[\tau_{k,1},u_{k+1}]}\;\\
&\leqs 
\Bigprobin{\tau_{k,1},\varphi_0(\tau_{k,1})}{
\widehat M_k<\varphi_0(t_1)\leqs\bar\phi_0(t_1)-\hat\phi_0(t_1)+h\sqrt{
\zeta(t_1) } 
\;\forall t_1\in [\tau_{k,1},\tilde u_{k,2}]}\;\\
&\qquad{}+\Bigexpecin{u_k,\varphi_0(u_k)}{1_{\{\tau_{k,2}<\tilde u_{k,2}\}} \\
&\qquad{}\quad\times \probin{\tau_{k,2},\varphi_0(\tau_{k,2})}{-d<\hat{\phi_0}
(t_1)+\varphi_0(t_1)\leqs\bar\phi_0(t_1)+h\sqrt{\zeta(t_1)} \;\forall t_1\in 
[\tau_{k,2},u_{k+1}]}}\;.
\end{align}
To bound the first term on the right-hand side, we introduce the linear process 
\begin{equation}
 \varphi_0^{(k)}(t_1)
 = \varphi_0(\tilde u_{k,1}) \e^{\hat\alpha(t_1,\tilde u_{k,1})/\eps} 
 + \frac{\sigma}{\sqrt{\eps}} \int_{\tilde u_{k,1}}^{t_1} 
\e^{\hat\alpha(t_1,v)/\eps} \6W_0(v)
\end{equation} 
which satisfies 
\begin{equation}
 \varphi_0(t) \leqs 
 \varphi_0^{(k)}(t)+\widehat M_k 
 \qquad 
 \forall t_1 \in [\tilde u_{k,1},u_{k+1}]\;.
\end{equation} 
Then we have the estimates
\begin{align}
\expec{\varphi_0^{(k)}(\tilde 
u_{k,2})}&=\varphi_0(\tau_{k,1})\e^{\hat\alpha(\tilde 
u_{k,2},\tau_{k,1})/\eps} \\
&\leqs \bigbrak{\bar\phi_0(\tau_{k,1}) + \overline M_k - \hat\phi_0(\tau_{k,1})}
\e^{\varrho/3}\;,\\
\e^{-2\hat\alpha(\tilde 
u_{k,2},\tau_{k,1})/\eps}\variance(\varphi_0^{(k)}(\tilde u_{k,2}))
&\geqs \inf_{u_k\leqs t_1\leqs u_{k+1}}\frac{\sigma^2}{2\hat 
a(t_1,\hat\phi_0(t_1))}[1-\e^{-2\varrho/3}]\;.
\end{align}
The first term on the right-hand side of~\eqref{eq:prob_tau_k,1} can then be 
bounded by 
\begin{equation}
\label{eq:borne_phi0_hat}
 \frac{2}{\sqrt{\pi}}\frac{1}{\sigma}C_2(k)\frac{1}{\sqrt{
1-\e^{-2\varrho/3}}}\;,
\end{equation} 
where
\begin{equation}
 C_2(k) = \sup_{u_k\leqs t_1\leqs u_{k+1}}\sqrt{\hat 
a(t_1,\hat\phi_0(t_1))}
\sup_{u_k\leqs t_1\leqs u_{k+1}}(\bar\phi_0(t_1) + \overline M_k - 
\hat{\phi}_0(t_1))\;.
\end{equation} 
Finally, in order to estimate the second summand in \eqref{eq:prob_tau_k,1}, 
we use the end point estimate
\begin{align}
&\bigprobin{\tau_{k,2},\varphi_0(\tau_{k,2})}{-d<\hat{\phi_0}
(t_1)+\varphi_0(t_1)\leqs\bar\phi_0(t_1)+h\sqrt{\zeta(t_1)} \;\forall t_1\in 
[\tau_{k,2},u_{k+1}]}\\
&\qquad\leqs \bigprobin{\tau_{k,2},\varphi_0(\tau_{k,2})}{-d<\hat{\phi_0}
(u_{k+1})+\varphi_0(u_{k+1})} \\
&\qquad\leqs \frac{1}{2}+\frac{1}{\sqrt{\pi}}\frac{1}{\sigma} C_3(k)
\biggbrak{d+\hat\phi_0(u_{k+1})+\widehat M_k}
\frac{\e^{-\varrho/3}}{\sqrt{1-\e^{-2\varrho/3}}}\;,
\label{eq:borne_endpoint}
\end{align}
where 
\begin{equation}
 C_3(k) = \sup_{u_k\leqs t_1\leqs u_{k+1}}\sqrt{\hat a(t_1,\hat\phi_0(t_1))}\;.
\end{equation}
Summing~\eqref{eq:borne_phi0_bar}, \eqref{eq:borne_phi0_hat} 
and~\eqref{eq:borne_endpoint} we get the existence of a constant $C_0>0$ such 
that 
\begin{equation}
Q_k\leqs\frac{1}{2}+C_0\biggbrak{\frac{h}{\sigma}\e^{-\varrho/3R} 
C_1(k) +\frac{1}{\sigma}C_2(k) 
 +\frac{1}{\sigma}\e^{-\varrho/3}C_3(k) \bigpar{1+\widehat M_k}}\;.
\end{equation}
Since 
\begin{align}
|t_1|&\leqs c_1\sigma^{2/3}\;,\\
\hat a(t_1,\hat\phi_0(t_1))&\asymp |t_1|\vee \sqrt{\delta\vee\eps}\;,\\
\bar\phi_0(t_1)-\hat\phi_0(t_1)&\leqs \bar c_1|t_1|\;,
\end{align}
where $\bar c_1$ is proportional to $c_1$ 
and $\varrho\geqs1$, there exists another constant $C_4$ such that 
\begin{equation}
Q_k\leqs 
\frac{1}{2}+C_4\Bigbrak{\frac{h}{\sigma}\e^{-\varrho/3R} 
+ \bar c_1^{\,3/2} + \frac{h_\perp^2}{\sigma^{4/3}} (1 + \e^{-\varrho/3})
+ \frac{1}{\sigma^{2/3}} 
\e^{-\varrho/3}}\;.
\end{equation}
Choosing $h_\perp\leqs \bar c_\perp\sigma^{2/3}$ we get
\begin{equation}
Q_k\leqs \frac{1}{2} + 
C_4\Bigbrak{\frac{h}{\sigma}\e^{-\varrho/3R} + \bar c_1^{\,3/2} 
+ 2\bar c_\perp^2 + \frac{1}{\sigma^{2/3}} \e^ {-\varrho/3}}\;.
\end{equation}
For $c_1$ such that $\bar c_1^{\,3/2} = 2\bar c_\perp^2 = \frac{1}{24C_4}$ and 
\begin{equation}
 \varrho = 3R\log \Bigpar{36C_4\frac{h}{\sigma}} 
 \vee 3 \log \Bigpar{\frac{18C_4}{c_1\sigma^{2/3}}} 
 \vee 1\;, 
\end{equation} 
$Q_k$ is bounded by $\frac{2}{3}$ for 
$k=0, \dots, K-1$.
We conclude that with this choice of $\varrho$, we have 
\begin{align}
\Bigprobin{-c_1\sigma^{2/3},\phi_0}{&-d<\phi_0(t_1)\leqs\bar\phi_0(t_1)+h\sqrt{
\zeta(t_1) } 
\forall t_1\in[-c_1\sigma^{2/3},t]} \\
&\leqs \biggpar{\frac{2}{3}}^{K-1} 
=\frac{3}{2}\exp\biggset{-K\log\biggpar{\frac32}}\;
\end{align}
which yields the claimed result, owing to our choice~\eqref{eq:def_K} of $K$, 
and the fact that $\varrho$ has order $\log(\sigma^{-1})$. 
\end{proof} 

The conclusion of Theorem~\ref{thm:phi0_strong} now follows immediately by 
combining the last two propositions. 
\end{proof}

\begin{proof}[Proof of Proposition~\ref{prop:d0}]
We introduce the stopping times
\begin{align}
\tau_+&=\inf\bigset{t_1\in[t_0,t_0 + \tilde c\eps] \colon
\phi_0(t_1) > -d+\rho}\;,\\
\tau_-&=\inf\bigset{t_1\in[t_0,t_0 + \tilde c\eps] \colon
\phi_0(t_1) < -d_0}\;,
\end{align} 
and the process 
\begin{equation}
 \tilde\phi_0(t) = -d - \frac{1}{\eps}f_0(t-t_0)
 + \frac{\sigma}{\sqrt{\eps}} W_{t-t_0}\;.
\end{equation}
Taking the constant $M$ in the statement of the proposition equal to $M_2$, one 
can check, in a similar way as before, that $\phi_0(t_1) \leqs 
\tilde\phi_0(t_1)$ for all $t_1 \leqs \tau_-\wedge\tau_+\wedge 
\tau_{\cB_\perp(h_\perp)}$. Now we observe that 
\begin{align}
\bigprob{\tilde\phi_0(t) &\geqs -d_0 \;\forall t\in[t_0,t_0 + \tilde c\eps]} \\
\leqs{}& \biggprob{\sup_{t\in[t_0,t_0 + \tilde c\eps]} 
\Bigbrak{\tilde\phi_0(t) + \frac{1}{\eps}f_0(t-t_0)} > -d + \rho} \\
&{}+ \Bigprob{-d_0 \leqs \tilde\phi_0(t) \leqs -d+\rho - 
\frac{1}{\eps}f_0(t-t_0)\;\forall t\in[t_0,t_0 + \tilde c\eps]}\;.
\end{align}
The second term on the right-hand side vanishes as soon as we take $\tilde c > 
(d_0-d+\rho)/f_0$, while the first one is equal to 
\begin{equation}
 \biggprob{\sup_{t\in[t_0,t_0 + \tilde c\eps]} \frac{\sigma}{\sqrt{\eps}} 
W_{t-t_0} > \rho} 
\leqs \e^{-\rho^2/(2\tilde c\sigma^2)}
\end{equation} 
by a Bernstein-type inequality. Now we note that for any $t\in[t_0, t_0 + 
\tilde c\eps]$, we have 
\begin{align}
 \bigprob{\tau_- > t} 
&= \bigprob{\tau_- > t, t \leqs \tau_- \wedge 
\tau_+ \wedge \tau_{\cB_\perp(h_\perp)}}
+ \bigprob{\tau_+ \wedge \tau_{\cB_\perp(h_\perp)} < t < \tau_-} \\
&\leqs \bigprob{\tilde\phi_0(t) \geqs -d_0 \;\forall t\in[t_0,t_0 + \tilde 
c\eps]} 
+ \bigprob{\tau_+ \wedge \tau_{\cB_\perp(h_\perp)} < t \wedge \tau_-}\;.
\end{align}
We have already shown that the first term on the right-hand side is 
exponentially small, and the second term can be controlled as in the 
preceding results.  
\end{proof}


\appendix

\section{Some useful inequalities in Sobolev spaces}
\label{annexes} 

Given $\psi\in L^2(\T)$, Sobolev's inequality states that given any $p\geqs2$, 
for any $s>\frac12 - \frac1p$, there exists a finite constant $\CSob(s,p)$ such 
that
\begin{equation}
\label{sobolev_ineq} 
\norm{\psi}_ {L^p}\leqs \CSob(s,p)\norm{\psi}_ {H^s}\;.
\end{equation} 
The following estimate on products in Sobolev spaces applies to the case 
$s>\frac12$. A concise proof can be found 
in~\cite[Th\'eor\`eme~7]{Bourdaud_calcul_symbolique}. 

\begin{lemma}[Products in Sobolev Spaces]
If $s>\frac{1}{2}$ then there is a bilinear application 
\begin{equation}
\begin{array}{rccll}
       H^s(\mathbb{T})\ \times \ H^s(\mathbb{T}) &\longrightarrow& \ 
H^s(\mathbb{T})\; \\
      \quad (\psi,\phi)  &\longmapsto& \ \psi\phi\;,
 \end{array}
\end{equation} 
which coincides with the pointwise product and satisfies the estimate
\begin{equation}
\label{sob_prod} 
\norm{\psi\phi}_{H^s(\mathbb{T})}\leqs 
C\norm{\psi}_{H^s(\mathbb{T})}\norm{\phi}_{H^s(\mathbb{T})}
\end{equation} 
for some finite constant $C = C(s)$. 
\end{lemma}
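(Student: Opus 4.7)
My plan is to prove the estimate by passing to the Fourier side, where pointwise multiplication becomes convolution of Fourier coefficients, and then combining Peetre's inequality with Young's convolution inequality. For bookkeeping it will be cleanest to expand in the complex exponential basis $\tilde e_k(x) = L^{-1/2}\e^{2\pi\icx kx/L}$, which is unitarily equivalent to the real basis $\{e_k\}_{k\in\Z}$ of~\eqref{eq:Fourier} via an orthogonal rotation on each pair $\{e_k,e_{-k}\}$; this change of basis preserves $\norm{\cdot}_{H^s}$ up to a constant factor. Expanding $\psi=\sum_k\psi_k\tilde e_k$ and $\phi=\sum_l\phi_l\tilde e_l$, the product has Fourier coefficients $(\psi\phi)_m=L^{-1/2}\sum_{k+l=m}\psi_k\phi_l$, so up to a constant
\begin{equation*}
\norm{\psi\phi}_{H^s}^2 \lesssim \sum_m \jbrack{m}^{2s} \biggabs{\sum_{k+l=m}\psi_k\phi_l}^2.
\end{equation*}

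The first key step will be Peetre's inequality $\jbrack{m}^s \leqs C_s(\jbrack{k}^s+\jbrack{l}^s)$ for $m=k+l$ and $s\geqs 0$, which follows from the elementary bound $\jbrack{k+l}^2\leqs 2(\jbrack{k}^2+\jbrack{l}^2)$. Setting $A_k=\jbrack{k}^s\abs{\psi_k}$ and $B_l=\jbrack{l}^s\abs{\phi_l}$, so that $\norm{A}_{\ell^2}=\norm{\psi}_{H^s}$ and $\norm{B}_{\ell^2}=\norm{\phi}_{H^s}$, this yields the pointwise estimate
\begin{equation*}
\jbrack{m}^s \biggabs{\sum_{k+l=m}\psi_k\phi_l} \leqs C_s\bigbrak{(A*\abs{\phi})_m + (\abs{\psi}*B)_m}.
\end{equation*}

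The second step will apply Young's convolution inequality $\norm{a*b}_{\ell^2}\leqs\norm{a}_{\ell^2}\norm{b}_{\ell^1}$, which reduces the problem to showing that $\psi,\phi\in\ell^1$. This is where the hypothesis $s>\frac12$ enters in an essential way: by Cauchy--Schwarz,
\begin{equation*}
\sum_k\abs{\psi_k} \leqs \biggpar{\sum_k\jbrack{k}^{-2s}}^{1/2}\norm{\psi}_{H^s},
\end{equation*}
and the zeta-type sum $\sum_k\jbrack{k}^{-2s}$ converges precisely when $2s>1$. Taking $\ell^2_m$-norms of the previous display and combining yields
\begin{equation*}
\norm{\psi\phi}_{H^s} \leqs 2C_s\biggpar{\sum_k\jbrack{k}^{-2s}}^{1/2}\norm{\psi}_{H^s}\norm{\phi}_{H^s},
\end{equation*}
which is the desired bilinear estimate.

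There is no real obstacle in this argument: every ingredient is elementary, and the critical threshold $s=\frac12$ appears naturally through the convergence of $\sum_k\jbrack{k}^{-2s}$, with the constant $C(s)$ blowing up like $(s-\frac12)^{-1/2}$ as $s\downarrow\frac12$. The only mildly technical issue is the passage between the real basis $\{e_k\}$ and the complex exponentials; this is harmless because the basis change is unitary and diagonal in frequency magnitude, so it commutes with the Sobolev weights $\jbrack{k}^s$. One could alternatively work directly with the real basis via the product-to-sum identities, producing four convolution-type sums, each estimated by the same argument.
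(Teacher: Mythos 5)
Your argument is correct and complete. The paper itself does not prove this lemma: it only cites \cite[Th\'eor\`eme~7]{Bourdaud_calcul_symbolique}, so you are supplying a self-contained proof where the authors outsource one. What you give is the standard ``Sobolev algebra'' argument: pass to Fourier coefficients so that the product becomes a convolution, split the weight via Peetre's inequality $\jbrack{k+l}^s \leqs C_s(\jbrack{k}^s+\jbrack{l}^s)$, and close with Young's inequality $\norm{a*b}_{\ell^2}\leqs\norm{a}_{\ell^2}\norm{b}_{\ell^1}$ together with the Cauchy--Schwarz embedding $H^s\hookrightarrow\ell^1$ of Fourier coefficients, which is exactly where $s>\frac12$ enters. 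All three steps are correctly stated and correctly combined, and your $\ell^1$ bound also justifies that the pointwise product is well defined (absolute and uniform convergence of the Fourier series, i.e.\ $H^s\subset L^\infty$ for $s>\frac12$), so the formal manipulation of the double sum is legitimate. The remark about the real basis \eqref{eq:Fourier} is handled properly: the change to complex exponentials only rotates within each pair $\{e_k,e_{-k}\}$ and hence commutes with the weights $\jbrack{k}^{2s}$, so the norm is preserved. Your elementary route buys transparency and an explicit constant with the sharp blow-up rate $(s-\tfrac12)^{-1/2}$; the cited reference of Bourdaud proves a more general composition/paraproduct statement in Besov scales, of which this lemma is a special case. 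One could tighten your write-up slightly by noting that Peetre's inequality in the form you use needs the elementary bound $(a+b)^{s/2}\leqs C(a^{s/2}+b^{s/2})$ (trivial for $s\leqs 2$, by convexity otherwise), but this is cosmetic.
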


While the above result does not hold if $s\leqs\frac12$, we have the 
following consequence of Young's inequality, a proof of which can be found, for 
instance, in~\cite[Lemma~4.3]{BG12a}.

\begin{lemma}[Young-type inequality]
Let $r, s,t \in (0,\frac12)$ be such that $t<r+s-\frac{1}{2}$. Then there exists 
a finite constant $C=C(r,s,t)$ such that
\begin{equation}
\label{ineq_young}
\norm{\psi \ast \phi}_{H^t}\leqs C\norm{\psi}_{H^r}\norm{\phi}_{H^s}<\infty\;.
\end{equation} 
\end{lemma}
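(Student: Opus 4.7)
The inequality is really an estimate on pointwise products of functions on the torus, viewed via the convolution of their Fourier coefficients. I would work entirely on the Fourier side. Write
\[
 \widehat{(\psi\phi)}_k = \sum_{l\in\Z} \hat\psi_l\, \hat\phi_{k-l}\;,
\]
so that by Plancherel,
\[
 \norm{\psi\phi}_{H^t}^2
 = \sum_{k\in\Z} \jbrack{k}^{2t} \biggabs{\sum_{l\in\Z} \hat\psi_l\, \hat\phi_{k-l}}^2\;.
\]
Introducing the $\ell^2$ sequences $u_l = \jbrack{l}^r\abs{\hat\psi_l}$ and $v_l = \jbrack{l}^s\abs{\hat\phi_l}$, for which $\norm{u}_{\ell^2} = \norm{\psi}_{H^r}$ and $\norm{v}_{\ell^2} = \norm{\phi}_{H^s}$, the first step is Cauchy--Schwarz applied to the inner sum with the splitting $\hat\psi_l\hat\phi_{k-l} = (u_l v_{k-l}) \cdot \jbrack{l}^{-r}\jbrack{k-l}^{-s}$. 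This gives
\[
 \bigabs{\widehat{(\psi\phi)}_k}^2
 \leqs \Biggpar{\sum_{l\in\Z} \frac{1}{\jbrack{l}^{2r}\jbrack{k-l}^{2s}}}
 \Biggpar{\sum_{l\in\Z} u_l^2\, v_{k-l}^2}\;.
\]
Summing over $k$, the second factor becomes $\norm{\psi}_{H^r}^2 \norm{\phi}_{H^s}^2$ by Fubini, so it remains to control
\[
 S(k) := \jbrack{k}^{2t} \sum_{l\in\Z} \frac{1}{\jbrack{l}^{2r}\jbrack{k-l}^{2s}}\;.
\]

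The main technical step is proving the discrete convolution estimate
\[
 \sum_{l\in\Z} \frac{1}{\jbrack{l}^{2r}\jbrack{k-l}^{2s}}
 \leqs \frac{C(r,s)}{\jbrack{k}^{2r+2s-1}}
\]
under the hypotheses $2r,2s<1$ and $2r+2s>1$ (the latter being automatic since $t>0$ forces $r+s>\tfrac12$). I would prove this by comparing the sum to $\int_\R (1+x^2)^{-r}(1+(k-x)^2)^{-s}\6x$ and splitting the range of integration into three regions: $\abs{x}\leqs\abs{k}/2$, $\abs{k-x}\leqs\abs{k}/2$, and the tail $\abs{x},\abs{k-x} \geqs \abs{k}/2$. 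In the first region $\jbrack{k-x}\asymp\jbrack{k}$, and the integrand reduces to $\jbrack{x}^{-2r}$ integrated over $\abs{x}\leqs\abs{k}/2$; since $2r<1$, this contributes $\jbrack{k}^{1-2r-2s}$. The second region is symmetric, and in the tail both factors are at least $\jbrack{k}/2$, giving an even smaller contribution. Plugging this back yields $S(k)\leqs C \jbrack{k}^{2t-(2r+2s-1)}$, which is uniformly bounded in $k$ precisely when $t\leqs r+s-\tfrac12$, with strict inequality ensuring finiteness of $C(r,s,t)$.

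Combining the two displays gives
\[
 \norm{\psi\phi}_{H^t}^2
 \leqs \Bigpar{\sup_{k\in\Z} S(k)} \norm{\psi}_{H^r}^2 \norm{\phi}_{H^s}^2
 \leqs C(r,s,t)^2 \norm{\psi}_{H^r}^2 \norm{\phi}_{H^s}^2\;,
\]
which is the claim. The only genuine obstacle is the convolution-sum estimate; everything else is Cauchy--Schwarz and Fubini. If one wished to avoid the integral comparison, the same bound can be obtained by a dyadic decomposition of the sum over $l$ according to the relative sizes of $\abs{l}$, $\abs{k-l}$ and $\abs{k}$, which gives the same exponent by counting lattice points in each dyadic shell.
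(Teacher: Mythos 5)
Your argument is correct, but there is nothing in the paper to compare it with: the paper does not prove this lemma at all, it simply cites~\cite[Lemma~4.3]{BG12a}, so your write-up is in effect a self-contained replacement for that external reference. Your reading of the statement is the right one: although the lemma is written with $\psi\ast\phi$, the way it is used (in Lemma~\ref{lemma3.3}, to control powers $\psi^k$) and the exponent condition $t<r+s-\tfrac12$ show that the intended content is the pointwise-product estimate, where the convolution takes place at the level of Fourier coefficients --- for a literal convolution of functions the coefficients would multiply and a much weaker condition on $t$ would suffice trivially. Your scheme (Cauchy--Schwarz on the coefficient convolution with the splitting $\hat\psi_l\hat\phi_{k-l}=(u_lv_{k-l})\jbrack{l}^{-r}\jbrack{k-l}^{-s}$, then the discrete bound $\sum_l\jbrack{l}^{-2r}\jbrack{k-l}^{-2s}\leqs C\jbrack{k}^{1-2r-2s}$ for $2r,2s<1$, $2r+2s>1$) is the standard proof and is sound; the hypothesis $t>0$ indeed guarantees $r+s>\tfrac12$. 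Two small imprecisions, neither of which affects the conclusion: in the tail region $\abs{x},\abs{k-x}\geqs\abs{k}/2$ the contribution is of the \emph{same} order $\jbrack{k}^{1-2r-2s}$ as the two main regions (split it further into $\abs{x}\leqs 2\abs{k}$, where the measure is of order $\abs{k}$, and $\abs{x}>2\abs{k}$, where $\jbrack{k-x}\gtrsim\jbrack{x}$), not ``even smaller''; and $\sup_k S(k)$ is already finite for $t\leqs r+s-\tfrac12$, the strict inequality being part of the hypothesis rather than what makes the constant finite. One should also record that with the real basis~\eqref{eq:Fourier} the product formula holds only up to harmless normalisation constants (or pass to the complex exponential basis), which get absorbed into $C(r,s,t)$.
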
 


\bibliographystyle{plain}
{\small \bibliography{SR_ref}}

\begin{thebibliography}{10}

\bibitem{ATW_21}
Hassan Alkhayuon, Rebecca~C. Tyson, and Sebastian Wieczorek.
\newblock Phase-sensitive tipping: {H}ow cyclic ecosystems respond to
  contemporary climate.
\newblock \texttt{arXiv:2101.12107}.

\bibitem{BSV}
Roberto Benzi, Alfonso Sutera, and Angelo Vulpiani.
\newblock The mechanism of stochastic resonance.
\newblock {\em J. Phys.\ A}, 14(11):L453--L457, 1981.

\bibitem{NB_slowly_oscillating_20}
Nils Berglund.
\newblock An {E}yring--{K}ramers law for slowly oscillating bistable
  diffusions.
\newblock Preprint \texttt{arXiv/2007.08443}, 2020.

\bibitem{BG_pitchfork}
Nils Berglund and Barbara Gentz.
\newblock Pathwise description of dynamic pitchfork bifurcations with additive
  noise.
\newblock {\em Probab. Theory Related Fields}, 122(3):341--388, 2002.

\bibitem{BG_SR}
Nils Berglund and Barbara Gentz.
\newblock A sample-paths approach to noise-induced synchronization: stochastic
  resonance in a double-well potential.
\newblock {\em Ann. Appl. Probab.}, 12(4):1419--1470, 2002.

\bibitem{berglund2002GeoPerSDE}
Nils Berglund and Barbara Gentz.
\newblock Geometric singular perturbation theory for stochastic differential
  equations.
\newblock {\em J.~{Differential} {Equations}}, 191:1--54, 2003.

\bibitem{Berglund_Gentz_book}
Nils Berglund and Barbara Gentz.
\newblock {\em Noise-induced phenomena in slow-fast dynamical systems. {A}
  sample-paths approach}.
\newblock Probability and its Applications (New York). Springer-Verlag London,
  Ltd., London, 2006.

\bibitem{BG12a}
Nils Berglund and Barbara Gentz.
\newblock Sharp estimates for metastable lifetimes in parabolic {SPDE}s:
  {K}ramers' law and beyond.
\newblock {\em Electron. J. Probab.}, 18:no. 24, 58, 2013.

\bibitem{BG_periodic2}
Nils Berglund and Barbara Gentz.
\newblock On the {N}oise-{I}nduced {P}assage through an {U}nstable {P}eriodic
  {O}rbit {II}: {G}eneral {C}ase.
\newblock {\em SIAM J. Math. Anal.}, 46(1):310--352, 2014.

\bibitem{Bourdaud_calcul_symbolique}
G\'{e}rard Bourdaud.
\newblock Le calcul symbolique dans certaines alg\`ebres de type {S}obolev.
\newblock In {\em Recent developments in fractals and related fields}, Appl.
  Numer. Harmon. Anal., pages 131--144. Birkh\"{a}user Boston, Boston, MA,
  2010.

\bibitem{bourdaud}
Gerard Bourdaud and Yves Meyer.
\newblock Fonctions qui op\`erent sur les espaces de {S}obolev.
\newblock {\em Journal of Functional Analysis}, 97(2):351--360, 1991.

\bibitem{Eisenman_Wettlaufer_08}
I.~Eisenman and J.~S. Wettlaufer.
\newblock Nonlinear threshold behavior during the loss of {A}rctic sea ice.
\newblock {\em PNAS}, 106(1):28--32, 2009.

\bibitem{Fenichel}
Neil Fenichel.
\newblock Geometric singular perturbation theory for ordinary differential
  equations.
\newblock {\em J. Differential Equations}, 31(1):53--98, 1979.

\bibitem{Fox}
Ronald~F. Fox.
\newblock Stochastic resonance in a double well.
\newblock {\em Phys.\ Rev.~A}, 39:4148--4153, 1989.

\bibitem{GMSMP}
L.~Gammaitoni, E.~Menichella-Saetta, S.~Santucci, F.~Marchesoni, and
  C.~Presilla.
\newblock Periodically time-modulated bistable systems: {S}tochastic resonance.
\newblock {\em Phys. Rev. A}, 40:2114--2119, 1989.

\bibitem{GHM}
Luca Gammaitoni, Peter H\"anggi, Peter Jung, and Fabio Marchesoni.
\newblock Stochastic resonance.
\newblock {\em Rev.\ Mod.\ Phys.}, 70:223--287, 1998.

\bibitem{gnann_kuehn_pein_2019}
Manuel~V. Gnann, Christian Kuehn, and Anne Pein.
\newblock Towards sample path estimates for fast–slow stochastic partial
  differential equations.
\newblock {\em European Journal of Applied Mathematics}, 30(5):1004–1024,
  2019.

\bibitem{HanggiReview02}
Peter H{\"a}nggi.
\newblock Stochastic resonance in biology: How noise can enhance detection of
  weak signals and help improve biological information processing.
\newblock {\em Chemphyschem}, 3(3):285--290, 2002.

\bibitem{HIPP_book}
Samuel Herrmann, Peter Imkeller, Ilya Pavlyukevich, and Dierk Peithmann.
\newblock {\em Stochastic resonance. A mathematical approach in the small noise
  limit.}
\newblock American Mathematical Society, Providence, RI, 2014.

\bibitem{Jetschke_86}
G.~Jetschke.
\newblock On the equivalence of different approaches to stochastic partial
  differential equations.
\newblock {\em Math. Nachr.}, 128:315--329, 1986.

\bibitem{JH2}
Peter Jung and Peter H\"anggi.
\newblock Amplification of small signals via stochastic resonance.
\newblock {\em Phys.\ Rev.~A}, 44:8032--8042, 1991.

\bibitem{MuratovVanden-EijndenE}
C.B. Muratov, E.~Vanden-Eijnden, and W.~E.
\newblock Self-induced stochastic resonance in excitable systems.
\newblock {\em Physica D}, 210:227--240, 2005.

\bibitem{Nicolis}
C.~Nicolis and G.~Nicolis.
\newblock Stochastic aspects of climatic transitions---additive fluctuations.
\newblock {\em Tellus}, 33(3):225--234, 1981.

\bibitem{Tihonov}
A.~N. Tihonov.
\newblock Systems of differential equations containing small parameters in the
  derivatives.
\newblock {\em Mat.\ Sbornik N. S.}, 31:575--586, 1952.

\bibitem{Velez01}
P.~V{\'e}lez-Belch{\'\i}, A.~Alvarez, P.~Colet, J.~Tintor{\'e}, and R.~L.
  Haney.
\newblock Stochastic resonance in the thermohaline circulation.
\newblock {\em Geophysical Research Letters}, 28(10):2053--2056, 2001.

\bibitem{WJ}
Kurt Wiesenfeld and Fernan Jaramillo.
\newblock Minireview of stochastic resonance.
\newblock {\em Chaos}, 8:539--548, 1998.

\bibitem{WM}
Kurt Wiesenfeld and Frank Moss.
\newblock Stochastic resonance and the benefits of noise: {F}rom ice ages to
  crayfish and {SQUIDs}.
\newblock {\em Nature}, 373:33--36, 1995.

\end{thebibliography}

\newpage
{\small \tableofcontents}

\vfill

\bigskip\bigskip\noindent
{\small
Institut Denis Poisson (IDP) \\ 
Universit\'e d'Orl\'eans, Universit\'e de Tours, CNRS -- UMR 7013 \\
B\^atiment de Math\'ematiques, B.P. 6759\\
45067~Orl\'eans Cedex 2, France \\
{\it E-mail addresses: }
{\tt nils.berglund@univ-orleans.fr}, 
{\tt rita.nader@univ-orleans.fr}

\end{document}